\newtheorem{thm}{Theorem}
\newtheorem*{thm*}{Theorem}
\newtheorem{lem}[thm]{Lemma}
\newtheorem{prop}[thm]{Proposition}
\newtheorem{cor}[thm]{Corollary}
\theoremstyle{definition}
\newtheorem{defn}[thm]{Definition}
\theoremstyle{remark}
\newtheorem{rem}[thm]{Remark}
\newtheorem{ass}[thm]{Assumption}
\newtheorem{ex}[thm]{Example}
\numberwithin{equation}{section}
\numberwithin{thm}{section}
\newcommand{\fr}{\penalty-20\null\hfill$\blacksquare$}
\newcommand{\defeq}{\coloneqq}
\newcommand{\eqdef}{\eqqcolon}
\def\XXint#1#2#3{{\setbox0=\hbox{$#1{#2#3}{\int}$}
		\vcenter{\hbox{$#2#3$}}\kern-.5\wd0}}
\renewcommand{\dv}{{\text {div}}}
\newcommand{\RR}{\mathbb{R}}
\newcommand{\NN}{\mathbb{N}}
\newcommand\supp{{\rm spt}}
\newcommand{\sing}{{\rm Sing}}
\newcommand{\reg}{{\rm Reg}}
\newcommand\res{\mathop{\hbox{\vrule height 7pt width .3pt depth 0pt
			\vrule height .3pt width 5pt depth 0pt}}\nolimits}
		\newcommand{\mres}{\res}
\newcommand{\im}{{\rm Im}}
\newcommand{\gr}{{\rm Gr}}
\newcommand{\bT}{\mathbf{T}}
\newcommand{\bG}{\mathbf{G}}
\newcommand{\p}{{\mathbf{p}}}
\newcommand{\sep}{{\mathrm{sep}}}
\newcommand{\sW}{{\mathscr{W}}}
\newcommand{\sC}{{\mathscr{C}}}
\newcommand\sS{{\mathscr S}}
\newcommand{\bGam}{{\bm \Gamma}}
\newcommand\bmo{{\bm m}_0}
\newcommand{\cM}{{\mathcal{M}}}
\newcommand{\bU}{{\mathbf{U}}}
\newcommand{\Phii}{{\bm{\Phi}}}
\newcommand{\cL}{{\mathcal{L}}}
\newcommand{\cK}{{\mathcal{K}}}
\newcommand{\bS}{{\mathbf{S}}}
\newcommand{\bD}{{\mathbf{D}}}
\newcommand{\bH}{{\mathbf{H}}}
\newcommand{\bI}{{\mathbf{I}}}
\newcommand{\bE}{{\mathbf{E}}}
\newcommand{\bSigma}{{\mathbf{\Sigma}}}
\newcommand{\bB}{{\mathbf{B}}}
\newcommand{\bC}{{\mathbf{C}}}
\newcommand\Z{{\mathbb Z}}
\newcommand\N{{\mathbb N}}
\newcommand\R{{\mathbb R}}
\newcommand{\eps}{{\varepsilon}}
\newcommand{\bPsi}{{\mathbf{\Psi}}}
\newcommand{\Lip}{{\rm {Lip}}}
\newcommand{\diam}{{\rm {diam}}}
\newcommand{\dist}{{\rm {dist}}}
\newcommand{\cB}{{\mathcal{B}}}
\newcommand{\cG}{{\mathcal{G}}}
\newcommand{\cF}{{\mathcal{F}}}
\newcommand{\cR}{{\mathcal{R}}}
\newcommand{\mass}{{\mathbf{M}}}
\newcommand{\Iq}{{\mathcal{A}}_Q}
\def\a#1{\llbracket{#1}\rrbracket}
\newcommand{\D}{\textup{Dir}}
\newcommand{\de}{\partial}
\newcommand{\etaa}{{\bm{\eta}}}
\newcommand\B{\bB}
\newcommand{\bs}{\mathbf{s}}
\def\@tocline#1#2#3#4#5#6#7{\relax
	\ifnum #1>\c@tocdepth 
	\else
	\par \addpenalty\@secpenalty\addvspace{#2}%
	\begingroup \hyphenpenalty\@M
	\@ifempty{#4}{%
		\@tempdima\csname r@tocindent\number#1\endcsname\relax
	}{%
		\@tempdima#4\relax
	}%
	\parindent\z@ \leftskip#3\relax \advance\leftskip\@tempdima\relax
	\rightskip\@pnumwidth plus4em \parfillskip-\@pnumwidth
	#5\leavevmode\hskip-\@tempdima
	\ifcase #1
	\or\or \hskip 1em \or \hskip 2em \else \hskip 3em \fi%
	#6\nobreak\relax
	\dotfill\hbox to\@pnumwidth{\@tocpagenum{#7}}\par
	\nobreak
	\endgroup
	\fi}
\begin{document}
\begin{abstract}
      The main goal of this work is to prove an instance of the unique continuation principle for area minimizing integral currents. More precisely, consider an $m$-dimensional area minimizing integral current and an $m$-dimensional minimal surface, both contained in $\R^{n+m}$ with $n\geq 1$. We show that if, in an integral sense, the current has infinite order of contact with the minimal surface at a point, then the current and the minimal surface coincide in a neighbourhood of that point.
 \end{abstract}
	\title{Unique continuation for area minimizing currents}

	\author[C.\ Brena]{Camillo Brena}
	\address{C.~Brena: Scuola Normale Superiore, Piazza dei Cavalieri 7, 56126 Pisa} 
	\email{\tt camillo.brena@sns.it}
        \author[S.\ Decio]{Stefano Decio}
 	\address{S.~Decio: School of Mathematics, Institute for Advanced Study, 1 Einstein Dr., Princeton NJ 08540} 
	\email{\tt sdecio@ias.edu}
	\maketitle
	\setcounter{tocdepth}{1}
 
	\tableofcontents
\section{Introduction}

One of the key features of analytic functions is the unique continuation property. This can take several shapes - what is usually called \emph{weak} unique continuation postulates that if the function vanishes on an open set it must vanish everywhere, while \emph{strong} unique continuation keeps the same conclusion but under the assumption that the function vanishes to infinite order at a point. 

Starting with the work of Carleman \cite{Carleman}, it was realized that unique continuation is not really a feature of analyticity but rather of \emph{ellipticity}, and that solutions to relatively nice systems of elliptic PDEs should enjoy unique continuation properties. The first proofs of strong unique continuation for solutions of a class of second-order elliptic systems in any dimension are due to Aronszajn and collaborators (see \cite{Aronszajn}, \cite{AKS}), using the rather technical method of weighted integral inequalities developed by Carleman (these inequalities are now usually called Carleman inequalities). Independently and around the same time Agmon (see \cite{Agmon}) gave a completely different proof based on convexity properties of integral averages and an abstract ODE framework.\\

A conceptually clearer and technically simpler proof of the strong unique continuation property for solutions of second-order elliptic PDEs was given by Garofalo and Lin in \cite{GL1} and \cite{GL2}, and independently by Kazdan in \cite{Kazdan}. These new proofs were based on the remarkable discovery by Almgren in his ``big regularity paper'' (\cite{Almgren}) of what he called the frequency function and its monotonicity property. In the simplest setting of a harmonic function $u$ in $\R^d$, the frequency function is defined as
$$
\bI(r)=\frac{r\int_{\B_r(0)}|\nabla u|^2}{\int_{\de \B_r(0)}|u|^2}.
$$
If $u$ is a homogeneous harmonic polynomial, $\bI(r)$ is the degree of the polynomial -- for instance in $\R^2$ if $u(r,\theta)=a_kr^k\sin(k\theta)$, $\bI(r)=k$ (the reason for the name frequency should now be apparent). It follows that one can think of the frequency function as a far-reaching generalization of the degree of a polynomial. It is an elementary computation using the expansion in homogeneous harmonic polynomials that the vanishing order at $0$ of a harmonic function $u$ is given by $\lim_{r\to 0}\bI(r)$. Almgren's key observation is that $\bI(r)$ is a \emph{non-decreasing} function of $r$ -- in the case of classical harmonic functions this gives immediately the boundedness of the order of vanishing, and an effective bound is the frequency at scale $1$.

The works of Garofalo--Lin and Kazdan further explore Almgren's idea and prove (almost) monotonicity of a generalization of the frequency function for solutions of second-order linear elliptic PDEs with Lipschitz coefficients. The strong unique continuation property follows then from the almost monotonicity of the frequency and in fact the frequency function controls precisely the order of vanishing.\\

Almgren's goal was to prove the dimension bound on the singular set of higher codimension area minimizing integral currents, which are much more complicated objects than solutions of linear elliptic PDEs, and classical unique continuation is not explicitly mentioned -- but in fact, Almgren used the frequency function to prove that the sheets of the current cannot have infinite order of contact with each other, which is really a unique continuation result and a key step in obtaining the dimension bound (see Brian White's MathSciNet review of \cite{Almgren} for more extensive comments on this point). 

In the present work we study unique continuation in the original setting of Almgren, that of area minimizing integral currents (of codimension greater or equal to one). The question we deal with is whether an area minimizing current can have infinite order of contact with a fixed classical minimal surface, and the answer is that this can only happen if the current identically coincides with the surface. To fix ideas, note that if the classical minimal surface is a hyperplane and the current is a graph over that hyperplane this corresponds to a relatively classical unique continuation result. Let us now state our main result, and we will then further comment on it. We assume throughout the paper that the reader is familiar with the concept of an area minimizing integral current. When we say minimal surface below, we mean in the sense of having zero mean curvature.

\begin{thm}[Unique continuation]\label{mainthm}
Let $\cM\subseteq\RR^{n+m}$ be an  $m$-dimensional minimal surface and let $T$ be an $m$-dimensional (locally) area minimizing current in $\RR^{n+m}$ with $0\in\supp(T)$. Any ball in this statement will be centered at $0$. Assume that $\cM$ and $T$ have no boundary in $\bar\bB_1$. If
\begin{equation}\label{mainthmeq}
	\liminf_{r\searrow 0} \frac{1}{r^{N}}\int_{\bB_r} \dist^2(x,\cM)\dd\|T\|(x)=0\quad\text{for every }N\in\NN,
\end{equation}
then there exist $r\in (0,1)$ and an integer $Q$ such that 
\begin{equation}\label{mainthmeq1}
	T\mres\bB_r=Q\a{\cM}\mres \bB_r.
\end{equation}
In particular, $T$ is regular in a neighbourhood of $0$.

Assume moreover that $\cM\cap \B_1$ is connected and that $T\mres\B_1$ is indecomposable. Then $\cM\cap \B_1$  is orientable and
\begin{equation}\label{mainallball}
    T\mres \B_1=Q\a{\cM}\mres \bB_1.
\end{equation}
\end{thm}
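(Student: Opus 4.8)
The plan is to reduce the unique continuation statement to a statement about multivalued graphs and then run an Almgren-type frequency function argument. First I would localize: since $\cM$ is a smooth minimal surface through $0$, after a rotation I can write $\cM$ near $0$ as a graph of a smooth function over its tangent plane $\pi_0 \cong \R^m$, and I can assume $\pi_0 = \R^m \times \{0\}$. The hypothesis \eqref{mainthmeq} says precisely that the integral $\int_{\bB_r}\dist^2(x,\cM)\dd\|T\|$ decays faster than any power of $r$, which will force, via the monotonicity of mass ratio and the compactness theory for area minimizing currents, that every tangent cone to $T$ at $0$ is supported in $\pi_0$ with integer multiplicity $Q$; in particular $\Theta(T,0)=Q \in \N$. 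By Almgren's interior regularity theory (or rather the $\eps$-regularity statement around points of density equal to an integer where the excess is small — which is the consequence of the decay assumption at small scales), $T$ coincides in a small ball $\bB_\rho$ with the push-forward of a Lipschitz $Q$-valued graph over $\cM$, i.e. $T\mres\bB_\rho$ is parametrized by $u\colon \cM\cap\bB_\rho \to \mathcal{A}_Q$ which is Dir-minimizing up to higher order terms, or — better, since $\cM$ is minimal — we can set up the problem so that $u$ solves the Euler–Lagrange system for the area functional on the normal bundle of $\cM$.

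The core of the argument is then: show that if such a (suitably normalized, average-free) multivalued solution $u$ has infinite order of vanishing at $0$ in the $L^2$ sense, then $u\equiv 0$ near $0$. For this I would introduce the Almgren frequency function
\[
\bI(r)=\frac{r\,\bD(r)}{\bH(r)}, \qquad \bD(r)=\int_{\bB_r\cap\cM}|Du|^2, \quad \bH(r)=\int_{\de\bB_r\cap\cM}|u|^2,
\]
adapted to the curved base $\cM$ with the appropriate metric weights, and prove its almost-monotonicity: $\bI(r)\,e^{Cr}$ (or $\bI(r)+Cr$) is non-decreasing, using the first variation identities for $\bD$ and $\bH$ together with the minimality of $\cM$ and the Dir-almost-minimality of $u$. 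The decay hypothesis \eqref{mainthmeq} translates into $\bI(r)\to +\infty$ as $r\to 0$, contradicting the finiteness of $\lim_{r\to 0}\bI(r)$ that almost-monotonicity provides — unless $\bH(r)\equiv 0$ on some interval $(0,\rho)$, i.e. $u\equiv 0$ there. This gives \eqref{mainthmeq1} with $Q=\Theta(T,0)$.

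For the global statement \eqref{mainallball}, having established $T\mres\bB_r = Q\a{\cM}\mres\bB_r$ on a small ball, I would propagate the identity along $\cM\cap\B_1$. Consider the set $U$ of points $p\in\cM\cap\B_1$ near which $T$ agrees with some integer multiple of $\a{\cM}$; by the local statement just proved (applied with the decay assumption now automatically satisfied, since near such a $p$ the distance function vanishes $\|T\|$-a.e.), $U$ is open, and it is clearly closed in $\cM\cap\B_1$, hence $U=\cM\cap\B_1$ by connectedness. The multiplicity is locally constant on the connected $\cM\cap\B_1$, hence equal to a single integer $Q$; and the fact that a connected piece of $\cM$ carries an integral current structure forces $\cM\cap\B_1$ to be orientable. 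The indecomposability of $T\mres\B_1$ is used to rule out the degenerate possibility $Q=0$ (which would make $T\mres\B_1$ vanish near $0$, contradicting $0\in\supp T$ combined with $\supp T\subseteq\cM$) and more precisely to ensure $T\mres\B_1$ does not split off pieces supported away from $\cM$; since $\supp(T)\subseteq\cM$ is forced globally once $T-Q\a{\cM}$ is seen to be a boundaryless cycle that vanishes on an open set, indecomposability then pins down $T\mres\B_1 = Q\a{\cM}\mres\bB_1$.

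The main obstacle, and the technical heart of the paper, is establishing the almost-monotonicity of the frequency function in this setting: one must handle a multivalued, only-Lipschitz, approximately-Dir-minimizing map $u$ over a \emph{curved} minimal base $\cM$, where the error terms coming from the curvature of $\cM$ and from the fact that $T$ is merely area minimizing (not that $u$ is exactly Dir-minimizing) must be controlled uniformly down to scale $0$ — and crucially these errors must be shown to be subcritical enough (of the form $o(\bI(r))$ or summable) that they do not destroy the blow-up that the infinite-vanishing hypothesis produces. This is exactly where Almgren's machinery (the higher-integrability / reverse-Poincaré estimates, the harmonic approximation, and the careful bookkeeping of the Taylor expansion of the area functional around $\cM$) has to be invoked and adapted; getting \eqref{mainthmeq} to genuinely imply $\lim_{r\to 0}\bI(r)=+\infty$ rather than just a large-but-finite limit is the crux.
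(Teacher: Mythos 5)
Your outline for the strong part (flat tangent cone from \eqref{mainthmeq}, normal approximation over $\cM$, Almgren-type frequency $\bI(r)=r\bD(r)/\bH(r)$, almost-monotonicity, contradiction with infinite vanishing order) follows the same path as the paper, and the logical direction you choose — showing the hypothesis would force $\bI(r)\to\infty$ — is the contrapositive of the paper's argument (bounded frequency forces at most polynomial decay of $\bH$), which is a legitimate reformulation. What your sketch underweights is that the normal approximation $N$ is \emph{not} a single object valid at all scales: it is only a faithful approximation on a finite range of scales, so the paper has to run the frequency machinery across a countable family of ``intervals of flattening'' and glue the estimates at the junctions (Proposition~\ref{compH} and the induction \eqref{ind1}--\eqref{ind2}). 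You do flag this as the technical heart, so this is a difference of level of detail rather than of route.

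For the weak part, however, there is a genuine gap. You write that the set $U$ of points near which $T$ agrees with an integer multiple of $\a{\cM}$ is ``clearly closed,'' and that at its points the decay hypothesis is ``automatically satisfied.'' Neither of these holds at a boundary point $x'$ of $U$: there $T$ need \emph{not} agree with $Q\a{\cM}$, and the integral $\int_{\bB_r(x')}\dist^2(\cdot,\cM)\,\dd\|T\|$ has no reason to vanish to infinite order (extra sheets of the current may approach $\cM$ only tangentially at $x'$, exactly as in the paper's Example $T_\alpha$). So the open/closed argument does not close on its own. The paper instead shows that $\partial U'$ is contained in $\sing(T)$ — by invoking the real analyticity of graphical minimal surfaces (Morrey) at regular boundary points to propagate the identity — and then disposes of the singular boundary by combining the $\dim_{\cH}\le m-2$ bound on $\sing(T)$ from Almgren/De Lellis--Spadaro with Federer's flat support theorem to conclude $\partial(T'-T\mres\B_1)=0$; only at that stage does indecomposability enter, to force $T'=T$. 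Your use of indecomposability (``to rule out $Q=0$'' and ``to ensure no pieces split off'') is not by itself enough: without first showing $T'-T\mres\B_1$ is a cycle, indecomposability gives no purchase, and the assertion that ``$\supp(T)\subseteq\cM$ is forced globally'' is precisely the conclusion, not a usable hypothesis. This step — analyticity plus the singular-set codimension bound plus Federer's theorem — is the missing idea and cannot be bypassed by a naive connectedness argument.
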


Note that the fact that $T\mres\B_1$ is indecomposable is essential to have \eqref{mainallball}, as the following easy example shows.
\begin{ex}
	Endow $\RR^{2+2}=\RR^4\simeq\mathbb{C}^2$ with coordinates $(z,w)\in\mathbb{C}\times\mathbb{C}$ and, for $\alpha > 0$, let $T_\alpha$ be the $2$-dimensional (locally) area-minimizing current defined as $T_\alpha\defeq \a{\{w=0\}}+\a{\{w=\alpha (z-\sfrac{1}{2})\}}$. Then, for every $z\in\mathbb{C}\setminus \{\sfrac{1}{2}\}$, $T$ is regular in a neighbourhood of $(z,0)$ and has infinite order of contact with the minimal submanifold given by the real $2$-plane $\{w=0\}$. However, there is no neighbourhood of $\{w=0\}$ in which $T_\alpha$ is regular. Notice that as $\alpha\rightarrow0$, $\bE(T_\alpha,\bB_1)\rightarrow 0$, (see Definition \ref{d:excess_and_height} below for the excess) so that smallness of the excess is still not enough to give strong unique continuation.\fr
\end{ex}

Note that \eqref{mainthmeq} is a way of saying that $T$ and $\cM$ have infinite order of contact at $0$, and we conclude that if this happens $T$ and $\cM$ coincide in a neighbourhood of $0$. This is quite close in spirit to Almgren's argument -- a little informally, one can say that he proves that the different sheets of the current cannot have infinite order of contact between each other unless they coincide and the current is in fact regular near that point. Here we study contact with another surface rather than within the sheets, but the point we care to make is that unique continuation and regularity are inextricably intertwined, as already apparent in Almgren's work. 

\subsection{Short outline of the proof}
Almgren's daunting ``big regularity paper'' was substantially simplified and sharpened in a series of influential papers by De Lellis and Spadaro (\cite{DSq},\cite{DSsns},\cite{DS1},\cite{DS2},\cite{DS3}), which have in the years since become the foundational texts for approaching the regularity theory of higher codimension area minimizing integral currents. Our approach for the proof of Theorem \ref{mainthm} also follows those works and we make liberal use of lemmas and theorems from them. This makes the paper unfortunately not fully self-contained, as a careful reading of the proofs will require to go back and forth between the present paper and the articles of De Lellis and Spadaro -- we tried our best to indicate where we take things as a ``black box'', where we make trivial modifications, and where we differ substantially. \\

We believe, however, that the idea of the proof is not terribly complicated and we attempt to summarize it here. First, since due to \eqref{mainthmeq} a tangent cone to the origin is flat, we can assume that the excess of the current is small. Then we approximate the current with a multi-valued Lipschitz map $N$ constructed on the normal bundle of $\cM$. This approximation actually coincides with the current on a large set and comes with estimates on the Lipschitz constant and the Dirichlet energy -- see the statements in Section \ref{sectnorm}. The normal approximation construction is based on a Whitney cube decomposition of $\cM$. The square of the $L^2$ norm of $N$ is loosely comparable to $\int\dist^2(x,\cM)\dd\|T\|$ in each Whitney region, see Section \ref{sectcomp} and Proposition \ref{small}. In turn, the multi-valued Lipschitz map is close to a Dir-minimizing function, which is a multi-valued generalization of a harmonic function introduced by Almgren -- this fact is used to prove the important estimates in Sections \ref{sectsep}, which indeed allow to control quantities like Dirichlet energy and excess in terms of the same quantities in slightly different locations, much in the same way as it happens for harmonic functions.  
We then introduce a modified version of the frequency function (Section \ref{s:frequency}), again due to De Lellis and Spadaro, and prove its boundedness and almost monotonicity in certain intervals (corresponding to the scales in which $N$ is a good enough approximation of the current), see the statements in Section \ref{freqbound}. Finally, using some differential inequalities that relate logarithmic derivatives of $\int_{\cB_r} |N|^2$ with the frequency, we are able to prove quantitative boundedness from below of terms like $\int_{\cB_r} |N|^2$, which, together with the aforementioned comparison with terms of the form $\int_{\bB_r}\dist^2(x,\cM)\dd\|T\|$, concludes the argument. This last step actually comes first in the paper and occupies Section \ref{s:proof}.\\

The reader familiar with the works of De Lellis and Spadaro will have noticed that the steps described here mirror quite closely the program of those papers, and in fact the normal approximation construction is operationally almost the same -- the key difference is that they construct a ``center manifold'' (specifically the construction is in \cite{DS2}), which is essentially a smoothened out version of the ``average of the sheets'' of the current, and their Lipschitz approximation is on the normal bundle of this center manifold. Conceptually, in our setup the fixed minimal surface $\cM$ 
plays the same role that the center manifold has in \cite{DS2} for the construction of the normal approximation. It is worth highlighting that, despite the construction of the normal approximation on $\cM$ here and in \cite{DS2} following the same scheme, there is a deep difference: while in \cite{DS2} much effort is spent on actually building the center manifold (one should better say: \emph{a} center manifold, since this object might very well be non-unique) and indeed De Lellis and Spadaro have to perform an involved gluing procedure, we are given right from the beginning a smooth manifold $\cM$. Even though not having the necessity to build a center manifold somewhat simplifies our work, it also causes a drawback in the construction of the normal approximation: since we are not free to choose the manifold, we lose some flexibility. As an effect of this fact, notice that the estimate  on the barycenter $|\eta\circ N|$ of \cite[Equation (2.4)]{DS2} does not have a parallel in our Theorem \ref{t:approx}, and, indeed, is not expected to hold in our context. This for instance prevents us from handling the error estimates as in \cite{DS3} (see in particular the proof of \cite[Proposition 3.5]{DS3}) for the case in which our ambient space is not Euclidean but a smooth submanifold. \\

In a nutshell, our proof of Theorem \ref{mainthm} is really a combination of the construction of De Lellis and Spadaro in \cite{DS2}, \cite{DS3} and computations on the frequency function \textit{{\`a} la} Garofalo and Lin (\cite{GL1}, \cite{GL2}). This is most apparent in a model case in which there is only one ``interval of flattening'' (see Section \ref{sectif} below), as would happen if we were treating solutions of second-order linear elliptic PDEs, where the computations in Section \ref{s:proof} simplify and are analogous to those of \cite{GL1}. To fix ideas, let us further simplify the situation of Theorem \ref{mainthm} and assume that $\cM$ is an hyperplane $\pi_0=\RR^m\times \{0\}\subseteq\RR^m\times \RR$ and that the area minimizing current $T$ is graphical over $\pi_0$, say that $T$ is the graph of $N:\pi_0\rightarrow\RR=\{0\}\times \RR\subseteq\RR^m\times\RR$. Being $T$ minimal, $N$ solves the minimal surface equation, and we know, by classical arguments, that $N$ is real analytic. It is then easy to realize, by a Taylor expansion, that \eqref{mainthmeq} implies that the derivatives of $N$, of any order, vanish at $0$, so the result follows from the classical unique continuation for analytic functions. One could, however, forget about the analiticity and prove Theorem \ref{mainthm} in this setting by following (with several simplifications) the computations with the frequency function in Section \ref{s:proof}, building upon the estimates in Sections \ref{freq} and \ref{freqbound} (again, those estimates in this setting being quite a bit easier) -- in fact, we might even be tempted to invite the reader to follow the arguments in the present paper thinking about this situation at first reading.
In the general case, the analiticity argument clearly breaks down and in fact analiticity need not hold (indeed, it usually fails), and we have to deal with the following complications: 
\begin{itemize}
\item Even though $T$ may be graphical, it needs not be a single graph, but rather a multigraph,
\item $T$ may be not graphical.
\end{itemize}
The first consideration pushed Almgren to deal with Q-valued functions (see \cite{DSq}), whereas the second consideration led to the process of normal approximation mentioned above. In particular, it may happen that $T$ is only \emph{approximately} a graph of a multi-valued map. It is a testament to the strength of the arguments in the works of Almgren and De Lellis--Spadaro that the frequency function proof can indeed be adapted to cover such general case, as we set out to do here.

The structure of our paper is based on the articles \cite{DS2} and \cite{DS3} -- in particular, Part \ref{sect:normapprox}, which contains the construction of the normal approximation, mirrors \cite{DS2}, while Part \ref{part2}, containing the estimates for the modified frequency function, mirrors \cite{DS3}. For the benefit of the reader, we tried to maintain some loose correspondence between the Sections in our paper and those in \cite{DS2} and \cite{DS3}, and this will be indicated at the beginning of the respective Parts.

\subsection{Future directions}
There are several possible interesting directions stemming from this work, and we believe that studying unique continuation questions in the context of minimal surfaces (in a sufficiently general sense) can prove very fruitful. We identify here two problems that we believe are far from easy and would lead to several interesting consequences.\\

\noindent \textbf{Question 1.} In the classical case, i.e. for solutions of second-order elliptic PDEs, one can quantitatively control the measure of the zero set in a ball around a point in which the order of vanishing is bounded in terms of an explicit function of the order of vanishing itself -- this is a well-known result of Hardt and Simon, \cite{HS}. Is there any hope of establishing an analogous result in our setting, meaning quantitatively bound the measure of the intersection set of $T$ with $\cM$? We believe the answer is not known and not trivial for the corresponding question in the simpler case of Dir-minimizing functions. \\

\noindent \textbf{Question 2.} Can we establish an analogue of Theorem \ref{mainthm} for stationary integral varifolds? The problem here becomes interesting also in codimension one. The proof of our theorem uses the fact that the current is area minimizing and not merely stationary chiefly in the construction of the normal approximation. It would be extremely interesting to find a way to bypass this difficulty.

\section*{Acknowledgments} We are extremely grateful to Camillo de Lellis for suggesting the problem and for sharing with us his ideas, as well as for his encouragement and support. 

The second named author is partially supported by the National Science Foundation and the Giorgio and Elena Petronio Fellowship Fund.

Most of this work was developed while the first named author was visiting the Institute for Advanced Study in Princeton. He wishes to thank the institute for the wonderful hospitality, and the Balzan project led by Luigi Ambrosio for financial support during the visit.

\section{Notation, height and excess}

For open balls in $\R^{m+n}$ we use $\B_r (p)$. 
For any linear subspace $\pi\subseteq \R^{m+n}$, $\pi^\perp$ is
its orthogonal complement, $\p_\pi$ the orthogonal projection onto $\pi$, $B_r (q,\pi)$ the disk $\bB_r (q) \cap (q+\pi)$
and  $\bC_r (p, \pi)$ the cylinder $\{(x+y):x\in \B_r (p), y\in \pi^\perp\}$ (in both cases $q$ is 
omitted if it is the origin and $\pi$ is omitted if it is $\pi_0\defeq\RR^m\times\{0\}$). We extend to affine subspaces the definition of $\p_\pi$.
We also assume that each $\pi$ is {\em oriented} by a
$k$-vector $\vec\pi :=v_1\wedge\ldots \wedge v_k$
(thereby making a distinction when the same plane is given opposite orientations) and with a slight abuse of notation we write $|\pi_2-\pi_1|$ for
$|\vec{\pi}_2 - \vec{\pi}_1|$ (where $|\cdot |$ stands for the norm associated to the usual inner product of $k$-vectors).

A primary role will be played by the $m$-dimensional plane $\mathbb R^m \times \{0\}$ with the standard orientation:
for this plane we use the symbol $\pi_0$ throughout the whole paper.

\begin{defn}[Excess and height]\label{d:excess_and_height}
	Given an integer rectifiable $m$-dimensional current $T$ in $\mathbb R^{m+n}$  and (oriented) $m$-planes $\pi, \pi'$, we define the {\em excess} of $T$ in balls and cylinders as
	\begin{align*}
		\bE(T,\B_r (x),\pi) &\defeq \frac{1}{2\omega_mr^m}\int_{\B_r (x)} |\vec T - \vec \pi|^2  \dd\|T\|,\\
		 \bE (T, \bC_r (x, \pi'), \pi) &\defeq \ \frac{1}{2\omega_mr^m} \int_{\bC_r (x, \pi')} |\vec T - \vec \pi|^2 \dd\|T\|.
	\end{align*}
	%
	Finally, if also $q\in\RR^{m+n}$, we define the {\it separation function} in a set $A \subseteq \R^{m+m}$ as
	\[
	\bs(T,A,q+\pi) \defeq \sup_{x\in\supp(T)\cap A} |\p_{\pi^\perp}(x-q)| .
	\]
\end{defn}

 Notice that only $\bs$ takes, as a third argument, affine planes instead of planes (indeed, it is the only quantity sensitive to translations of the plane). We will often need to compute $\bs$ on the tangent space  at $p$ of a Riemannian manifold $\cM$: we understand tangent spaces to be linear spaces, hence, the affine plane tangent to $\cM$ at $p$ will be denoted by $p+T_p\cM$.


\bigskip

For a Riemannian manifold $\cM$, the geodesic distance on $\cM$ will be denoted by $d$. Geodesic balls are denoted by $\cB_r (q)\subseteq\cM$. If $0\in\cM$, $\cB_r$  denotes $\cB_r(0)$.

\section{Proof of the main result}\label{s:proof}
\subsection{Strong unique continuation}

	Let $\cM$ and $T$ be as in the first part of Theorem \ref{mainthm}. In this subsection we prove \eqref{mainthmeq1}. 
	
	\bigskip

We can clearly assume that $0\in\supp(T)$. Then $0\in\cM$ (by  \eqref{mainthmeq} and the monotonicity formula); by possibly rotating the coordinates, we assume that $T_0\cM=\pi_0$.
 With a diagonal argument, we take $r_k\searrow0$ such that 
\begin{equation}\notag
\lim_k \frac{1}{r_k^{m+2}}\int_{\B_{r_k}} \dist^2(x,\cM)\dd\|T\|(x)=0
\end{equation}
and such that $T_k\defeq (\iota_{0,r_k})_\sharp T\to S$, where $S$ is an area minimizing cone.
We compute
\begin{align*}
	\int_{\B_1} \dist^2(x,\pi_0)^2\dd\|S\|(x)&=\lim_k 	\int_{\B_1} \dist^2(x,\pi_0)^2\dd\|T_k\|(x)=\lim_k \frac{1}{r_k^{m+2}}\int_{\B_{r_k}}\dist^2(x,\pi_0)\dd\|T\|(x)
	\\&\le \limsup_k \frac{1}{r_k^{m+2}}\int_{\B_{r_k}}4\dist^2(x,\cM)^2\dd\|T\|(x)+ \limsup_k \frac{1}{r_k^{m+2}}C(\cM)\|T\|(\B_{r_k})r_k^4=0,
\end{align*}
so that $S$ is flat, say $S=Q\a{\pi_0}$. Notice that this implies that $\Theta(0,T)=Q$ and that 
$\bE(T_k,\B_{6\sqrt{m}},\pi_0)\rightarrow 0$. 
Fix any choice of the parameters as in Assumption \ref{parametri}  (including $\eps_0$) such that all the results of Part \ref{sect:normapprox} and Part \ref{part2} hold.
 Then there exists $k_0$ such that $T^0\defeq T_{k_0}$ and $\iota_{0,r_{k_0}}(\cM)$ satisfy Assumption \ref{ipotesi} (with this choice of $\eps_0$). We then replace $T$ by the current given by Lemma \ref{height} and we notice that it is enough to prove \eqref{mainthmeq} for this new choice of $T$.  Clearly, also Assumption \ref{ass:sectfour} is in place. 
 
 We record, for future reference, the following fact:
 \begin{equation}\label{bgrvfd}
 \lim_k \bE(T,\B_{3\sqrt{m}r_k},\pi_0)=0\quad\text{if}\quad\lim_k \frac{1}{r_k^{m+2}}\int_{\B_{r_k}} \dist^2(x,\cM)\dd\|T\|(x)=0.
 \end{equation}

 Now  we assume by contradiction that, for every $r\in ]0,1[$, $\supp(T)\cap\B_r$ is not contained in $\cM$ (so that we can apply Theorem \ref{t:boundedness}). Indeed, notice that if, for some  $r\in ]0,1[$, $\supp(T)\cap\B_r\subseteq\cM$, then \eqref{mainthmeq1} holds, by the Constancy Theorem.
 
 Assume that $\gamma_3$ in Proposition \ref{p:variation} is fixed and take $p\in\NN$. Let $[s_j,t_j]$ be an interval of flattening associated to $T$.
 We compute, by the estimates of Proposition \ref{p:variation}, for a.e.\ $r\in[\sfrac{s_j}{t_j},3]$ ($C=C(C_1,\gamma_3)$),
 \begin{equation}
\begin{split}\label{vfeda}
	 	\partial_r\log(r^{-(m-1)-p}\bH)&=\frac{\bH'}{\bH}-\frac{m-1}{r}-\frac{p}{r}\le \frac{2}{\bH}\frac{\bE}{r}+C-\frac{p}{r}\\&\le 2\frac{\bD}{\bH}+C\frac{\bD}{\bH}+C\frac{\bSigma}{\bH}+C-\frac{p}{r}
 	\\&\le C\frac{\bI}{r}+C-\frac{p}{r}.
\end{split}
 \end{equation}
 Now we examine two cases separately. Splitting the two cases is not strictly necessary, but helps with the presentation.
 \begin{itemize}
 	\item[(1)] The intervals of flattening are finitely many,
  	\item[(2)] The intervals of flattening are infinitely many.
 \end{itemize}
 \medskip\textbf{Case (1).} Take $j_0$ such that $s_{j_0}=0$. 
 By Theorem \ref{t:boundedness}, for some $\rho_0\in ]0,1[$, $$\bH_{j_0}>0\text{ on }]0,\rho[\quad\text{and}\quad\bar\bI_{j_0}\defeq\sup_{r\in ]0,\rho_0[}\bI_{j_0}(r)<\infty,$$
 Therefore, by \eqref{vfeda}, for $p\ge 1$ big enough (depending upon $\bar\bI_{j_0}$ and all the parameters except $\eps_0$),  $r^{-(m-1)-p}\bH_{j_0}(r)$ is decreasing in $r$. In particular,
 \begin{equation}\label{c1}
 	\liminf_{r\searrow 0} \frac{\bH_{j_0}(r)}{r^{(m-1)+p}}>0.
 \end{equation}
 But, by Proposition \ref{small}, 
 \begin{equation}\label{c2}
 	\bH_{j_0}(r)\le 4r^{-1}\int_{\cB_r}|N_{j_0}|^2\le 16 r^{-1}\int_{\B_{2r}}\dist^2(x,\cM_{j_0})\dd\|T_{j_0}\|(x)=\frac{16}{rt_{j_0}^{m+2}}\int_{\B_{2r t_{j_0}}}\dist^2(x,\cM)\dd\|T\|(x).
 \end{equation}
 But this is a contradiction, as \eqref{c1} and \eqref{c2} are incompatible with \eqref{mainthmeq}.
\medskip \\\textbf{Case (2).} Take $j_0$ as in Theorem \ref{t:boundedness}, by \eqref{e:finita2}, $$\bar\bI\defeq\sup_{j\ge j_0}\sup_{r\in ]\sfrac{s_j}{t_j},3[}\bI_{j}(r)<\infty. $$
  Therefore, by \eqref{vfeda}, we obtain that,  if $p$ is big enough (depending upon $\bar\bI$ and all the parameters except $\eps_0$, and not depending on $j$),
\begin{equation}\label{fcads}
	  r^{-(m-1)-p}\bH_j(r)\text{ is decreasing}\quad\text{for every }j\ge j_0 \text{ and } r\in [\sfrac{s_j}{t_j},1].
\end{equation}
Let us also recall that $1\in ]\sfrac{s_j}{t_j},3[$ for every $j$,  by item $\rm(i)$ of Proposition \ref{p:flattening}.

Now take any $j> j_0$. Assume that $t_j=s_{j-1}$. We use Proposition \ref{compH} with \eqref{N:rough} to see that
 \begin{align*}
 	C_1^2\bH_{j}(3)\ge C_1 \bmo^{(j-1)}\left(\frac{s_{j-1}}{t_{j-1}}\right)^{2-2\delta_2}\ge \bH_{j-1}\left(\frac{s_{j-1}}{t_{j-1}}\right).
 \end{align*}
 Let now $q\ge 1$. Using the inequality above and \eqref{fcads} twice we get
\begin{align*}
    t_{j-1}^{-(m-1)-p-q} \bH_{j-1}(1)\le C_1 \left(\frac{t_{j}}{t_{j-1}}\right)^{-(m-1)-p} t_{j-1}^{-(m-1)-p-q}\bH_j(3)\le C_1 3^p\left(\frac{t_{j}}{t_{j-1}}\right)^q t_j^{-(m-1)-p-q}\bH_j(1).
\end{align*}
Now, if $q$ is big enough (depending upon a geometric constant and on $p$, see item $\rm(i)$ of Proposition \ref{p:flattening}), we obtain
 \begin{equation}\label{ind1}
 	t_j^{-(m-1)-p-q}\bH_j(1)\ge t_{j-1}^{-(m-1)-p-q} \bH_{j-1}(1)\quad\text{for every } j\ge j_0\text{ such that $t_j=s_{j-1}$}.
 \end{equation}
Instead, in the case $t_j<s_{j-1}$, we have what follows.
By \eqref{e:finita2} and \eqref{fcads}, we see that there exists a constant $C$ (depending on $T$, but not on $j$) such that
\begin{equation*}
	C\bH_j(1)\ge 1\quad\text{for every } j\ge j_0\text{ such that $t_j<s_{j-1}$}.
\end{equation*}
In particular, up to enlarging $j_0$,
\begin{equation}\label{ind2}
	t_j^{-(m-1)-p-q}\bH_j(1)\ge 1\quad\text{for every } j\ge j_0\text{ such that $t_j<s_{j-1}$}.
\end{equation}

Now recall that $\bH_{j_0}(1)>0$ (by \eqref{e:finita2}), so we can conclude by induction, using \eqref{ind1} and \eqref{ind2}, that
\begin{align*}
\boldsymbol{\iota}\defeq	\inf_{j\ge j_0} t_j^{-(m-1)-p-q}\bH_j(1)>0.
\end{align*}
Using \eqref{fcads} again and Proposition \ref{small}, we have that for every $r\in [s_j,t_j] $ with $j\ge j_0$,
\begin{align*}
	\boldsymbol{\iota}&\le \left(\frac{r}{t_j}\right)^{-(m-1)-p-q}\bH_j(\sfrac{r}{t_j})\le4 \left(\frac{t_j}{r}\right)^{m+p+q}\int_{\cB_{\sfrac{r}{t_j}}}|N_j|^2 \\&\le\left(\frac{t_j}{r}\right)^{m+p+q} 16\int_{\B_{\sfrac{2r}{t_j}}}\dist^2(x,\cM_j)\dd\|T\|(x)\\
&=\left(\frac{t_j}{r}\right)^{m+p+q}\frac{16}{t_j^{m+2}}\int_{\B_{2r}}\dist^2(x,\cM)\dd\|T\|(x)\\&\le \frac{16}{r^{m+p+q}}\int_{\B_{2r}}\dist^2(x,\cM)\dd\|T\|(x),
\end{align*}
and hence, recalling \eqref{bgrvfd} (together with item $\rm(ii)$ of Propostition \ref{p:flattening}), we derive a contradiction to \eqref{mainthmeq}.
\subsection{Weak unique continuation}
Now we prove the second part of Theorem \ref{mainthm}. Let then $T$ and $\cM$ be as in Theorem \ref{mainthmeq}. We know that, for some $r\in ]0,1[$, \eqref{mainthmeq1} holds.
Notice also that it is enough to show that $\supp(T)\cap\B_1\subseteq\cM$. Indeed, by the Constancy Theorem, we obtain that for every $x\in\cM$, there exists $Q_x$, $r_x\in ]0,1[$ such that $T\res\B_{r_x}(x)=Q_x\a{\cM}\res\B_{r_x}(x)$. Since $T$ and $\cM$ have no boundary in $\B_1$ and $\cM\cap\B_1$ is connected, it is easy to  conclude that $\cM$ is orientable in $\B_1$ with the orientation inherited by the current $T$ and $Q_x=Q$ is independent of $x$. 

Since $T$ and $\cM$ have no boundary in $\bar\B_1$, they have no boundary in $\B_R$ for some $R>1$.
Define  the open set
$$
\mathcal{U}\defeq \{x\in\cM\cap\B_R:\exists r\in ]0,1[,Q\in\Z\text{ such  that } T\res \B_{r}(x)=Q\a{\cM}\res\B_r(x)\}\subseteq\cM,
$$
and let $\mathcal{U}'$ the connected component of $\mathcal{U}$ containing $0$ (which is non-empty by the first part of the theorem).
Let $x'\in\partial \mathcal{U}'$. Assume that $x'\in\reg(T)$. Then there exist $r=r_{x'}\in ]0,1[$, $Q=Q_{x'}\in\mathbb{Z}$, and a minimal surface $\cM'$ with no boundary in $\B_r(x')$ such that $$T\res \B_r(x')=Q\a{\cM'}\res \B_r(x').$$  Notice that $x'\in\partial \mathcal{U}'$ implies that $x'\in\supp(T)$, hence $x'\in\cM\cap\cM'$. Consider a sequence of points $\mathcal{U}\ni x_k\to x'$ and notice that, for every $k$, $\cM'$ coincides with $\cM$ in a neighbourhood of $x_k$. Up to shrinking $r$, we can assume that for every $y\in\B_r(x')$, $\cM\cap\B_r(y)$ is given by the graph of a smooth function $\Phii_y:B_{2r}(T_y\cM,y)\to\RR^{m+n}$ and, similarly, $\cM'\cap\B_r(y)$ is given by the graph of a smooth function $\Phii_y':B_{2r}(T_y\cM',y)\to\RR^{m+n}$. Choosing $y=x_k$ for $k$ big enough, we immediately see that $T_y\cM=T_y\cM'$ and that $\Phii_y$ and $\Phii_y'$ coincide on an open set. Since graphical minimal surfaces are real analytic by the results of \cite{Morreyanalytic, Morrey2} (see \cite[Section 2]{Lawson:1977aa}), $\Phii_y$ and $\Phii_y'$ are real analytic, and therefore $\Phii_y=\Phii_y'$ by classical unique continuation for real analytic functions. This implies that $x'\in\mathcal{U}$. Therefore, $\partial \mathcal{U}'\subseteq\sing(T)\cup \partial \B_R$.

Now consider $T'\defeq T\res(\mathcal{U}'\cap\B_1)=(T\res\B_1)\res\mathcal{U}'$. Let $x\in\supp(\partial (T\res \B_1-T'))$, and observe that $x\in\partial \B_1\cup\partial\mathcal{U}'$. If $x\in\mathcal{U}'$, then $x\in\partial\B_1$ and there exists a neighbourhood of $x$ with $T'=T\res\B_1$.  Otherwise, $x\in\partial \mathcal{U}'\cap \bar \B_1$. To summarize,  $\partial(T'-T\res\B_1)\subseteq\partial \mathcal{U}'\cap\bar\B_1\subseteq\sing(T)$. 
By the result of \cite{Almgren00} (or \cite{DS1,DS2,DS3}),
$$
\dim_{\mathcal{H}}(\supp(\partial(T'-T\res\B_1)))\le \dim_{\mathcal{H}}(\sing(T)\cap\B_R)\le m-2.
$$
By \cite[Theorem 2.2]{FedererInt}, $\partial(T'-T\res\B_1)=0$. Being $T\mres\B_1$ indecomposable, either $T'=0$ or $T'=T$. As the first alternative is not possible by the first part of the theorem, we must have $T=T'$, so that $\supp(T)\cap\B_1=\supp(T')\cap\B_1\subseteq\cM$ and we are done.

\part{Normal approximation}\label{sect:normapprox}
In this Part of the paper we will construct a Lipschitz approximation $N$ of $T$ on the normal bundle of $\cM$. The construction is based on a delicate Whitney cube decomposition adapted to $\cM$. We will also prove estimates that relate the excess of $T$ with the Dirichlet energy of $N$ and the squared distance between $\cM$ and $T$ with the $L^2$ norm of $N$. As mentioned in the Introduction, this Part is quite similar in spirit (and sometimes in letter) to \cite{DS2} -- we remark that the main difference is that in \cite{DS2} the center manifold is actually constructed (starting from the ``average of the sheets'' of the Lipschitz approximation of \cite{DS1}), while in our case the surface $\cM$ is given and we have to construct the approximation on the normal bundle of $\cM$. Loosely, Sections \ref{s:assumptions} and \ref{sectwh} correspond to (sub)Sections 1.2 and 1.3 in \cite{DS2}, Section \ref{sectnorm} to Section 2 there, Sections \ref{sectsep} and \ref{sectcomp} to Section 3, and finally Section \ref{proofnorm} to Sections 4 to 8.  

\section{Assumptions and parameters}\label{s:assumptions}
We start by defining the main assumptions of this paper. \textbf{Assumption \ref{ipotesi} and Assumption \ref{parametri} will be the standing assumptions for this whole part. We are going to use the current $T$ given by Lemma \ref{height}} throughout the paper. 

\bigskip

In Assumption \ref{ipotesi} and Assumption \ref{parametri} below, we are going to introduce several parameters, namely (respectively) $$\eps_0\quad\text{and}\quad\gamma_1,\gamma_2,c_s,\beta_2,\delta_2,M_0,N_0,C_e,C_h.$$ 
In what follows, $C_0$ denotes a constant that depends only on $m,n,Q$, and may vary throughout the paper. We call such type of constants ``geometric constants''. 
Similarly, $C_1$ denotes a constant that depends only on $m,n,Q$, and the parameters of Assumption \ref{parametri} below \emph{except} $\eps_0$, and may vary throughout the paper.
The dependence of other constants upon the various parameters $p_i$ will be highlighted using the notation $C = C (p_1, p_2, \ldots)$, and it is implicit that such constants depend also on $m,n,Q$.
In particular,
\begin{equation}\label{constants}
	C_0=C(m,n,Q)\quad\text{and}\quad C_1=C(m,n,Q,\gamma_1,\gamma_2,c_s,\beta_2,\delta_2,M_0,N_0,C_e,C_h)
\end{equation}
Notice that there is no dependence on $\eps_0$ in $C_1$ (and that $m,n,Q$ determine $\gamma_1,\gamma_2,c_s,\beta_2,\delta_2$).   Also, whenever we encounter a term of the type $C_1\bmo$, we can make it arbitrarily small, taking $\eps_0$ small enough, depending on $C_1$ and therefore upon all other parameters.

\begin{ass}\label{ipotesi}
We assume that
$\cM  \subseteq \R^{m+n}$ is an $m$-dimensional $C^3$ Riemann manifold with no boundary in $\bB_{6\sqrt{m}}$, with $0\in \cM$ and $T_0\cM=\pi_0$. We moreover assume that  for every $p\in\cM$, $\cM$, in $\bB_{6\sqrt{m}}$ is given by the graph of a $C^3$ map $$\bPsi_p: B_{7\sqrt{m}}(p,T_p\cM)  \to T_p\cM^\perp.$$ 
We denote by $\mathbf{c} (\cM)$ the
number $\sup_{p\in \cM\cap \bB_{6\sqrt{m}}}\|D^2\bPsi_p\|_{C^1}$. 

Also, $T^0$ is an $m$-dimensional area minimizing current in $\RR^{m+n}$ with support in $\bar\B_{6\sqrt{m}}$. Moreover, $T^0$ satisfies, for some positive integer $Q$,
\begin{gather*}
\Theta (0, T^0) = Q\quad\text{and}\quad	\partial T^0 \res \B_{6\sqrt{m}} = 0,\\
\|T^0\|(\B_{6\sqrt{m}{\rho}})\le (\omega_mQ(6\sqrt{m})^m+\eps_0^2)\rho^m\quad\text{for every $\rho\in [0,1]$}
\end{gather*}
where $\eps_0$ is a strictly positive number whose choice will be specified later.
Finally, the assumption 
\begin{equation}
	\bmo := \max \{\mathbf{c} (\cM)^2, \bE(T^0,\B_{6\sqrt{m}},\pi_0)\} \le \eps_0^2 \le 1.\notag
\end{equation}
will be in force.
\fr
\end{ass}
We are going to use throughout the following trivial yet crucial remark:
\begin{equation}\notag
	\sup_{p\in \cM\cap \bB_{6\sqrt{m}}}\|\bPsi_p\|_{C^3}\le C_0 \sup_{p\in \cM\cap \bB_{6\sqrt{m}}}\|D^2\bPsi_p\|_{C^1}\le C_0\bmo^{\sfrac{1}{2}}.
\end{equation}
As a notation, we suppress the subscript $p$ for the point $0$ in $\Psi_p$,  so that we have
$$
\bPsi:\pi_0\cap \bB_{7\sqrt{m}}\to\pi_0^\perp.
$$
Also, setting $$\Phii(x) := (x,\bPsi(x))=(x,\bPsi_0(x))\quad\text{for $x\in \pi_0\cap \B_{7\sqrt{m}}$},$$
notice that 
\begin{equation}\label{asdcdsavs}
\|D\Phii\|_{C^0}\le \sqrt{1+C_0\bmo},
\end{equation}
and therefore
\begin{equation}\label{cdscas}
		|\Phii(x)-\Phii(y)|\le \sqrt{1+C_0\bmo}|x-y|\quad\text{ for every $x,y\in B_{5\sqrt{m}}$}.
\end{equation}
Similar notation will be used for $\Phii_p$. 
Here and after, we are abusing slightly the notation, writing $(x,y)$ instead on $x+y$, for $x\in \pi$, $y\in\pi^\perp$, and similarly, we will write $\pi\times\pi^\perp$ instead of $\pi+\pi^\perp$, for $\pi$ $m$-plane.

\begin{lem}\label{height} Let $T^0$ be as in Assumption \ref{ipotesi} and assume that $\eps_0$ is small enough (depending upon a geometric quantity). Then, if we set $T\defeq T^0\mres \B_{\sfrac{23\sqrt{m}}{4}}$:
	\begin{gather}\notag
		\partial T\res \bC_{\sfrac{11\sqrt{m}}{2}}=0\\\notag
		(\p_{\pi_0})_\sharp T\res \bC_{\sfrac{11\sqrt{m}}{2}}=Q\a{B_{\sfrac{11\sqrt{m}}{2}}}\\\notag
				\supp(T)\cap \bC_{{5\sqrt{m}}}\subseteq \big\{(x,y)\in\RR^{m}\times\RR^n: |y|< C_0 \bmo^{1/2}\big\}.
	\end{gather}
	In particular, for every $x\in B_{\sfrac{11\sqrt{m}}{2}}$, there exists $p\in\supp(T)$ with $\p_{\pi_0}(p)=x$.
\end{lem}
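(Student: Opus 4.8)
The plan is to establish the three displayed identities in order, each using the previous one, together with one structural remark used throughout: by Assumption \ref{ipotesi} and the monotonicity formula, the mass ratio $r\mapsto\|T^0\|(\B_r)/(\omega_m r^m)$ is non-decreasing on $(0,6\sqrt m]$, is bounded below by $\Theta(0,T^0)=Q$, and is bounded above by $Q+C_0\eps_0^2$ for every $r\le6\sqrt m$; so $T^0$ is flat — within $C_0\eps_0$ of a density-$Q$ plane — at \emph{every} scale $\le6\sqrt m$, and in particular $\bE(T^0,\B_{6\sqrt m},\pi_0)\le\bmo\le\eps_0^2$. This is the normalization step that opens the center-manifold construction in De Lellis--Spadaro.

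\emph{Step 1 (height bound).} First I would observe that $\supp(T^0)\cap\bC_{5.9\sqrt m}\subseteq\{|y|\le\eta(\eps_0)\}$ with $\eta(\eps_0)\to0$ as $\eps_0\to0$, by a compactness argument from the small excess (a subsequential blow-down limit is an area minimizing cone of density $Q$ with zero excess, hence $Q\a{\pi_0}$, and supports converge locally in Hausdorff distance). Then I would upgrade this — using the height bound for area minimizing currents, once the crude closeness has made $T^0$ genuinely cylindrical so that the Almgren--De Lellis--Spadaro height bound applies — to $\supp(T^0)\cap\bC_{5.9\sqrt m}\subseteq\{|y|\le C_0\bmo^{1/2}\}$; this gives the third assertion a fortiori on $\bC_{5\sqrt m}$, and also on the closed cylinder $\bar\bC_{23\sqrt m/4}$, which is what Step 2 needs. \textbf{This is the step I expect to be the main obstacle}, and specifically the exponent $\tfrac12$: the general height bound only yields the weaker $\tfrac1{2m}$, so one must exploit that the mass ratio here is pinned within $\eps_0^2$ of $\omega_m Q$ at every scale in $(0,1]$, which (via the monotonicity defect) makes the drift of $\supp(T^0)$ summable over dyadic scales with total $C_0\bmo^{1/2}$ — alternatively, one may invoke the sharper height bound available under this strong flatness hypothesis.

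\emph{Step 2 (no boundary in $\bC_{11\sqrt m/2}$).} Since $\partial T^0\res\B_{6\sqrt m}=0$, the slicing formula for $\partial(T^0\res\B_{23\sqrt m/4})$ shows $\supp(\partial T)\subseteq\partial\B_{23\sqrt m/4}\cap\supp(T^0)$. Each such $x$ lies in $\bar\bC_{23\sqrt m/4}\subseteq\bC_{5.9\sqrt m}$, so by Step 1 $|\p_{\pi_0^\perp}(x)|\le C_0\bmo^{1/2}$, whence $|\p_{\pi_0}(x)|^2=|x|^2-|\p_{\pi_0^\perp}(x)|^2\ge(23\sqrt m/4)^2-C_0^2\bmo>(11\sqrt m/2)^2$ once $\eps_0$ is small; therefore $\supp(\partial T)\cap\bC_{11\sqrt m/2}=\emptyset$, i.e.\ $\partial T\res\bC_{11\sqrt m/2}=0$.

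\emph{Step 3 (projected current, and the last assertion).} As $\bC_{11\sqrt m/2}=\p_{\pi_0}^{-1}(B_{11\sqrt m/2})$ and, by Step 1, $\supp(T)\cap\bC_{11\sqrt m/2}$ lies in a fixed compact slab, $S\defeq(\p_{\pi_0})_\sharp(T\res\bC_{11\sqrt m/2})$ is a well-defined integral current in $B_{11\sqrt m/2}$, and $\partial S$ is supported in $\partial B_{11\sqrt m/2}$ because $\partial(T\res\bC_{11\sqrt m/2})$ vanishes inside the open cylinder by Step 2. By the Constancy Theorem $S=Q'\a{B_{11\sqrt m/2}}$ for some $Q'\in\Z$, and to see $Q'=Q$ I would localize at $0$: for $\rho\searrow0$, $(\p_{\pi_0})_\sharp(T^0\res\bC_\rho)=Q'\a{B_\rho}$ (the constant is the same, $B_\rho$ being a connected subset of $B_{11\sqrt m/2}$), while by the slab containment $T^0\res\bC_\rho$ differs from $T^0\res\B_\rho$ by mass $o(\rho^m)$ and the blow-up of $T^0$ at $0$ is $Q\a{\pi_0}$ with positively oriented sheets (by small excess); comparing masses and orientations forces $Q'=Q$. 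Finally, since $Q\ge1$, $\p_{\pi_0}(\supp(T))\supseteq\supp(S)=\bar B_{11\sqrt m/2}$, which is the ``in particular'' statement. Apart from Step 1, all of this is routine: the slicing formula, the Constancy Theorem, and a blow-up computation of multiplicity.
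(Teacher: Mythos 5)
Your proposal is correct and takes essentially the same approach as the paper. The paper's own proof is a one-line citation to \cite[Lemma 1.6]{DS2} (which already gives the two current-theoretic identities and a height bound with the weaker exponent $\bmo^{\sfrac{1}{2m}}$) followed by an invocation of the improved height bound of \cite{AllardFirst}/\cite{SpolAlm} to upgrade the exponent to $\sfrac{1}{2}$; your Steps 2--3 simply unpack the slicing/constancy/blow-up argument already contained in that reference, and your Step 1 correctly flags the improved height bound as the only genuinely non-routine input.
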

\begin{proof}
	This is \cite[Lemma 1.6]{DS2}, we have only strengthened the last conclusion using the improved  $L^\infty$ estimate, see \cite{AllardFirst} or \cite{SpolAlm} (but still, with the same proof of \cite{DS2}).
\end{proof}
From now, we will always work with the current $T$ of Lemma \ref{height}.
\begin{ass}\label{parametri}
$\gamma_1,\gamma_2$ are defined as follows: $\gamma_1$ is the constant of \cite[Theorem 2.4]{DS1}, which essentially regulates the `goodness' of the Lipschitz approximation constructed there, and simply
 $$\gamma_2 \defeq \frac{\gamma_1}{4}.$$
 $c_s$ is defined as \begin{equation}\label{defncs}
 	 c_s \defeq \frac{1}{64\sqrt{m}}.
 \end{equation}
	$\beta_2,\delta_2$ are defined as follows:
	\begin{equation}\label{freavfad}
				\beta_2 \defeq 4\delta_2 \defeq \min \left\{\frac{1}{2m}, \frac{\gamma_1}{100}\right\}.
	\end{equation}
	The parameters $M_0,C_e,C_h \in [64,\infty)$ and $N_0\in\NN\setminus\{0\}$ are not fixed but are subject to further restrictions in the various statements, respecting the following ``hierarchy''.   
	\begin{itemize}
		\item[(a)]$M_0$ is larger than a geometric constant or larger than a constant $C (\delta_2)$;
		\item [(b)]$N_0$ is larger than $C (\beta_2, \delta_2, M_0)$ and satisfies
	\begin{equation}\label{dcasc}
	  {m} M_0 2^{10-N_0} \le 1;
\end{equation}
		\item [(c)]$C_e$ is larger than $C(\beta_2, \delta_2, M_0, N_0)$;
		\item[(d)]$C_h$ is larger than $C(\beta_2, \delta_2, M_0, N_0, C_e)$;
		\item[(e)] $\eps_0$  (see Assumption \ref{ipotesi}) is smaller than $C(\beta_2, \delta_2, M_0, N_0, C_e, C_h)$ and so small that the conclusions of Remark \ref{intorno_proiezione} below and Lemma \ref{height} hold, and finally so that, with respect to \eqref{asdcdsavs},  $\sqrt{1+C_0 \bmo}\le 2$.
\fr
	\end{itemize}
\end{ass}
Notice that by \eqref{defncs} and \eqref{dcasc}, 
\begin{equation}\label{fdasc}
	\sqrt{m}2^{-N_0} < c_s.
\end{equation}
To simplify our exposition, for smallness conditions on $\eps_0$ as in (e) we will use the sentence ``$\eps_0$ is sufficiently small'', understanding that the smallness depends upon all the other parameters.

\begin{rem}\label{intorno_proiezione}
	If $\eps_0$ is sufficiently small, depending upon a geometric constant, then the following hold.
	Set $$\bU\defeq \{x\in\RR^{m+n}:\exists! y=\p(x)\in \cM\cap \bC_{\sfrac{9\sqrt{m}}{2}}\text{ with }|x-y|<1\text{ and }(x-y)\perp \cM\}.$$
	This defines the map $\p:\bU\rightarrow\cM\cap\bC_{\sfrac{9\sqrt{m}}{2}}$. We assume that $\p$ extends to a smooth map defined on $\bar U$ with $\p^{-1}(y)=y+\bar B_{1}(0,(T_y\cM)^\perp)$ for every $y\in\cM\cap \bC_{\sfrac{9\sqrt{m}}{2}}$.  Denote by $\partial_l \bU\defeq\p^{-1}(\partial(\cM\cap\bC_{\sfrac{9\sqrt{m}}{2}}))$ the \textit{lateral boundary} of $\bU$.

	Finally, 
	\begin{equation}\label{vfcdsa}
	|\p_{\pi_0}(x)-\p_{\pi_0}(y)|\le d(x,y)\le\frac{19}{18}|\p_{\pi_0}(x)-\p_{\pi_0}(y)| \quad\text{for every }x,y\in\cM\cap\bB_{6\sqrt{m}},
	\end{equation} 
	provided $\eps_0$ is small enough, depending upon a geometric constant.\fr
\end{rem}

\section{Whitney decomposition} \label{sectwh}
We specify next some notation which will be recurrent in the paper when dealing with cubes in $\pi_0$.
For each $j\in \N$, $\sC^j$ denotes the family of closed cubes $L$ in $\pi_0$ of the form 
\begin{equation}\label{e:cube_def}
	[a_1, a_1+2\ell] \times\ldots  \times [a_m, a_m+ 2\ell] \times \{0\}\subseteq \pi_0 ,
\end{equation}
where $2\ell = 2^{1-j} \eqdef 2\ell (L)$ is the side-length of the cube, 
$a_i\in 2^{1-j}\Z$ for every $i$ and we require in
addition $-4 \le a_i \le a_i+2\ell \le 4$. 
To avoid cumbersome notation, we will usually drop the factor $\{0\}$ in \eqref{e:cube_def} and treat each cube, its subsets and its points as subsets and elements of $\mathbb R^m$. Thus, for the {\em center $x_L$ of $L$} we will use the notation $x_L=(a_1+\ell, \ldots, a_m+\ell)$, although the precise one is $(a_1+\ell, \ldots, a_m+\ell, 0, \ldots , 0)$.
Next we set $\sC \defeq \bigcup_{j\in \N} \sC^j$. 
If $H$ and $L$ are two cubes in $\sC$ with $H\subseteq L$, then we call $L$ an {\em ancestor} of $H$ and $H$ a {\em descendant} of $L$. When in addition $\ell (L) = 2\ell (H)$, $H$ is {\em a son} of $L$ and $L$ {\em the father} of $H$.

\begin{defn} A Whitney decomposition of $[-4,4]^m\subseteq \pi_0$ consists of a closed set $\bGam\subseteq [-4,4]^m$ and a family $\mathscr{W}\subseteq \sC$ satisfying the following properties:
	\begin{itemize}
		\item[(w1)] $\bGam \cup \bigcup_{L\in \mathscr{W}} L = [-4,4]^m$ and $\bGam$ does not intersect any element of $\mathscr{W}$;
		\item[(w2)] the interiors of any pair of distinct cubes $L_1, L_2\in \mathscr{W}$ are disjoint;
		\item[(w3)] if $L_1, L_2\in \mathscr{W}$ have non-empty intersection, then $\frac{1}{2}\ell (L_1) \le \ell (L_2) \le 2 \ell (L_1)$.
	\end{itemize}
\end{defn}
\begin{defn}[Refining procedure]
	For $L\in \sC$ we set 
	\begin{alignat*}{1}
		r_L&\defeq M_0 \sqrt{m} \ell (L),\\
		p_L&\defeq \Phii(x_L),\\
  		\pi_L&\defeq T_{p_L}\cM,\\
		\bC_L &\defeq \bC_{64 r_L} (p_L,\pi_L).
	\end{alignat*}We next define the families of cubes $\sS\subseteq\sC$ and $\sW = \sW_e \cup \sW_h \cup \sW_n \subseteq \sC$ with the convention that
	$\sS^j = \sS\cap \sC^j, \sW^j = \sW\cap \sC^j$ and $\sW^j_{\square} = \sW_\square \cap \sC^j$ for $\square = e,h,n$. We define $\sW^j = \sS^j= \emptyset $ for $j < N_0$. We proceed with $j\ge N_0$ inductively: if { no ancestor of $L\in \sC^j$ is in $\sW$}, then 
	\begin{itemize}
		\item[(EX)] $L\in \sW^j_e$ if $\bE (T, \bC_L, \pi_L) > C_e \bmo \ell (L)^{2-2\delta_2}$;
		\item[(HT)] $L\in \sW_h^j$ if $L\not \in \mathscr{W}_e^j$ and $\bs (T, \bC_L,p_L+\pi_L) > C_h \bmo^{\sfrac{1}{2}} \ell (L)^{1+\beta_2}$;
		\item[(NN)] $L\in \sW_n^j$ if $L\not\in \sW_e^j\cup \sW_h^j$ but it intersects an element of $\sW^{j-1}$;
	\end{itemize}
	if none of the above occurs, then $L\in \sS^j$.
	We finally set
	\begin{equation}\notag
		\bGam\defeq [-4,4]^m \setminus \bigcup_{L\in \sW} L = \bigcap_{j\ge N_0} \bigcup_{L\in \sS^j} L.
	\end{equation}
\end{defn}
Observe that, if $j>N_0$ and $L\in \sS^j\cup \sW^j$, then necessarily its father belongs to $\sS^{j-1}$.
\begin{rem}
	It would have been more appropriate to call the second stopping condition (SEP) instead of (HT) and to use subscript ``s'' (separation) instead of ``h'' (height)  for $\sW_h$ and $C_h$. However, our choice aims at keeping the same terminology of \cite{DS2,DS3} (see, in particular \cite[Definiton 1.10]{DS2}) and hence to avoid confusion when we recall parts of \cite{DS2,DS3} \textit{verbatim}.
Moreover, for condition (HT) we have used $\bmo^{\sfrac{1}{2}}$ instead of $\bmo^{\sfrac{1}{2m}}$ of \cite{DS2}. This is because we rely on the improved height bound \cite[Theorem 1.5]{SpolAlm} in place of \cite[Theorem A.1]{DS2}. This causes no significant difference in the presentation (however notice that due to this difference, (S3) of Proposition \ref{p:separ} is in integral form, whereas the corresponding (S3) of \cite[Proposition 3.1]{DS2} is in the stronger pointwise form). \fr
\end{rem}

For the following definition, we introduce a slight difference with respect to \cite[Definition 1.18]{DS2}: we do not intersect $H$ with $[-\sfrac{7}{2},\sfrac{7}{2}]$. The reason is that we have a description of $\cM$ in a region that is larger than $[-4,4]^m$.
\begin{defn}[Whitney regions]
	We call $(\bGam, \sW)$ the {\em Whitney decomposition associated to $\cM$}. 
	We call 	$\Phii (\bGam)$ the {\em contact set}.
	Moreover, to each $L\in \sW$ we associate a {\em Whitney region} $\cL$ on $\cM$ as follows:
	\begin{itemize}
		\item[(WR)] $\cL := \Phii (H)$, where $H$ is the cube concentric to $L$ with $\ell (H) = \sfrac{17}{16} \ell (L)$.
	\end{itemize}
\end{defn}

\begin{prop}[Tilting of planes]\label{prop:tilting}
	Let Assumption \ref{ipotesi} and Assumption \ref{parametri} hold, and assume that $\eps_0$ is sufficiently small (depending upon all other parameters). Then,  the following hold. 
	\begin{itemize}
		\item[\rm(i)]$\supp(T)\cap\bC_L\subseteq\bC_{5\sqrt{m}}$ for every $L\in\sW\cup\sS$.
	\end{itemize}
	Moreover, for every $H,L\in \sW\cup\sS$, 
	\begin{itemize}\setcounter{enumi}{1}
		\item[\rm(ii)] $|\vec\pi_L-\vec\pi_H|\le C_0\bmo^{\sfrac{1}{2}}|x_L-x_H|$ and, if $H\subseteq L$, $|\p_{\pi_H^\perp}(p_L-p_H)|\le C_0\bmo^{\sfrac{1}{2}}\ell(L)^2$,
		\item[\rm(iii)] $|\vec\pi_H-\vec\pi_0|\le C_0 \bmo^{\sfrac{1}{2}}$,
		\item[\rm(iv)]  $\supp(T)\cap\bC_H\subseteq \bC_L$ if $H\subseteq L$,
		\item [\rm(v)] $\bs(T,\bC_{36r_L}(p_L,\pi_H),p_H+\pi_H)\le C_1\bmo^{\sfrac{1}{2}}\ell(L)^{1+\beta_2}$ and $\supp(T)\cap \bC_{36r_L}(p_L,\pi_H)\subseteq\bC_L$ if $H\subseteq L$.
	\end{itemize}
	Finally, let $L\in\sW\cup\sS$, take $z\in \bC_L$, and set $z'\defeq p_L+D\Phii(x_L)(\p_{\pi_0}(z)-x_L) $. Then,
	\begin{itemize}
		\item[\rm(vi)] $	|z- z'|\le C_0 |\p_{\pi_L^\perp}(z-p_L)|$. In particular, $$|z-\Phii(\p_{\pi_0}(z))|\le C_0 |\p_{\pi_L^\perp}(z-p_L)|+\bmo^{\sfrac{1}{2}}|\p_{\pi_0}(z)-x_L|^2.$$
	\end{itemize}
\end{prop}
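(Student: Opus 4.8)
The plan is to prove the six items in the order stated. Items (ii), (iii), (vi) concern only the fixed manifold $\cM$ and follow from the bound $\sup_p\|\bPsi_p\|_{C^3}\le C_0\bmo^{\sfrac{1}{2}}$ together with $\bPsi(0)=0$ and $D\bPsi(0)=0$; items (i), (iv) and the inclusion in (v) are then obtained by chasing triangle inequalities, exploiting that $\bmo$ may be taken arbitrarily small and that $M_0\ge 64$ is large (with $2^{-N_0}M_0$ tiny, by \eqref{dcasc}); only the quantitative bound in (v) will use the stopping condition (HT) of the refining procedure.

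For the geometric items: in (iii), $D\bPsi(0)=0$ gives $|D\bPsi(x_H)|\le\|D^2\bPsi\|_{C^0}|x_H|\le C_0\bmo^{\sfrac{1}{2}}$ on $[-4,4]^m$ and $|\vec\pi_H-\vec\pi_0|\le C_0|D\bPsi(x_H)|$. For the first half of (ii), $|\vec\pi_L-\vec\pi_H|\le C_0|D\bPsi(x_L)-D\bPsi(x_H)|\le C_0\bmo^{\sfrac{1}{2}}|x_L-x_H|$, and for the second half one Taylor-expands $p_L-p_H=\Phii(x_L)-\Phii(x_H)=D\Phii(x_H)(x_L-x_H)+R$ with $|R|\le\bmo^{\sfrac{1}{2}}|x_L-x_H|^2$, notes that $D\Phii(x_H)(x_L-x_H)\in\pi_H$ so $\p_{\pi_H^\perp}(p_L-p_H)=\p_{\pi_H^\perp}(R)$, and uses $|x_L-x_H|\le\sqrt m\,\ell(L)$ when $H\subseteq L$. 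For (vi), writing $z-p_L=u+w$ with $u=\p_{\pi_L}(z-p_L)$ and $w=\p_{\pi_L^\perp}(z-p_L)$, the fact that $\pi_L$ is the graph of $D\bPsi(x_L)$ over $\pi_0$ gives the identity $D\Phii(x_L)\p_{\pi_0}(u)=u$, hence $z-z'=w-D\Phii(x_L)\p_{\pi_0}(w)$; since $w\perp\pi_L$ and $|\vec\pi_L-\vec\pi_0|\le C_0\bmo^{\sfrac{1}{2}}$ one has $|\p_{\pi_0}(w)|\le C_0\bmo^{\sfrac{1}{2}}|w|$, so $|z-z'|\le(1+C_0\bmo^{\sfrac{1}{2}})|w|$, and the ``in particular'' clause follows by a further Taylor expansion of $\Phii$ at $x_L$ using $\|D^2\Phii\|_{C^0}\le\bmo^{\sfrac{1}{2}}$.

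For the items involving $T$ I will repeatedly use that $\supp(T)\subseteq\bar\bB_{\sfrac{23\sqrt m}{4}}$ (Lemma \ref{height}), hence $|z|\le 6\sqrt m$ for $z\in\supp(T)$. For (i), given $z\in\supp(T)\cap\bC_L$ I write $\p_{\pi_0}(z)=x_L+\p_{\pi_0}(u)+\p_{\pi_0}(w)$, bound $|\p_{\pi_0}(u)|\le|u|<64r_L\le\sfrac{1}{16\sqrt m}$ by \eqref{dcasc} and $|\p_{\pi_0}(w)|\le C_0\bmo^{\sfrac{1}{2}}|w|\le C_0\bmo^{\sfrac{1}{2}}\sqrt m$ by (iii) (here $|w|\le|z-p_L|\le C_0\sqrt m$), and conclude $|\p_{\pi_0}(z)|<5\sqrt m$ once $\bmo$ is small, i.e.\ $z\in\bC_{5\sqrt m}$. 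For (iv) and the inclusion in (v) the recipe is identical: decompose $\p_{\pi_L}(z-p_L)$ through the projection onto $\pi_H$, estimating the tangential part by the radius of the smaller cylinder — which is $36r_L$ in the inclusion of (v), and $64r_H\le 32M_0\sqrt m\,\ell(L)$ (as $H\subsetneq L$) in (iv) — the translation by $|\p_{\pi_L}(p_L-p_H)|\le 2\sqrt m\,\ell(L)$, and the tilting error by $C_0|\vec\pi_L-\vec\pi_H|\,|z-p_H|\le C_0\bmo^{\sfrac{1}{2}}\ell(L)$ (from (ii) and $|z|\le 6\sqrt m$); since $M_0\ge 64$, the sum stays below $64r_L$, i.e.\ $z\in\bC_L$. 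The same computation also gives $\supp(T)\cap\bC_L\subseteq\bC_{L'}$ when $L'$ is the father of $L$.

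The quantitative bound in (v) is the step I expect to be the main obstacle. First note that if $H\subseteq L$ with $H,L\in\sW\cup\sS$, then either $H=L$ or $L\in\sS$ (a proper ancestor of a cube of $\sW\cup\sS$ cannot belong to $\sW$). I then choose a cube $L^\ast\in\sS$ with $L\subseteq L^\ast$, $\ell(L^\ast)\le 2\ell(L)$: take $L^\ast=L$ if $L\in\sS$, and $L^\ast$ the father of $L$ if $L\in\sW^j$ with $j>N_0$; the finitely many $L\in\sW^{N_0}$ — for which $\ell(L)=2^{-N_0}$ is a fixed constant — are handled separately by the global height bound of Lemma \ref{height}, absorbing $2^{N_0(1+\beta_2)}$ into $C_1$. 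For $L^\ast\in\sS$ the stopping condition (HT) yields $\bs(T,\bC_{L^\ast},p_{L^\ast}+\pi_{L^\ast})\le C_0 C_h\bmo^{\sfrac{1}{2}}\ell(L)^{1+\beta_2}$, and by the inclusions already proved $\supp(T)\cap\bC_{36r_L}(p_L,\pi_H)\subseteq\bC_L\subseteq\bC_{L^\ast}$, so $|\p_{\pi_{L^\ast}^\perp}(z-p_{L^\ast})|\le C_0 C_h\bmo^{\sfrac{1}{2}}\ell(L)^{1+\beta_2}$ for the relevant $z$. Writing $\p_{\pi_H^\perp}(z-p_H)=\p_{\pi_H^\perp}(z-p_{L^\ast})+\p_{\pi_H^\perp}(p_{L^\ast}-p_H)$, the last term is $\le C_0\bmo^{\sfrac{1}{2}}\ell(L^\ast)^2\le C_1\bmo^{\sfrac{1}{2}}\ell(L)^2$ by (ii), and splitting $z-p_{L^\ast}$ along $\pi_{L^\ast}\oplus\pi_{L^\ast}^\perp$ bounds the first by $|\p_{\pi_{L^\ast}^\perp}(z-p_{L^\ast})|+C_0|\vec\pi_H-\vec\pi_{L^\ast}|\,|\p_{\pi_{L^\ast}}(z-p_{L^\ast})|\le C_0 C_h\bmo^{\sfrac{1}{2}}\ell(L)^{1+\beta_2}+C_1\bmo^{\sfrac{1}{2}}\ell(L)^2$, using (ii) and $|\p_{\pi_{L^\ast}}(z-p_{L^\ast})|<64r_{L^\ast}\le C_1\ell(L)$. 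Since $\beta_2\le 1$ and $\ell(L)\le 1$ give $\ell(L)^2\le\ell(L)^{1+\beta_2}$, we obtain $\bs(T,\bC_{36r_L}(p_L,\pi_H),p_H+\pi_H)\le C_1\bmo^{\sfrac{1}{2}}\ell(L)^{1+\beta_2}$. The subtle point throughout is to check that every error created by replacing one tangent plane by another carries an extra factor $\ell(L)$ (so it is $O(\ell(L)^2)$, absorbed), while the single term of size exactly $\ell(L)^{1+\beta_2}$ comes only from (HT); the rest is a routine bookkeeping of triangle inequalities exploiting the smallness of $\bmo$.
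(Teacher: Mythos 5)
Your proposal is correct, and most items are handled by the same computations as the paper. The one place you genuinely diverge is the inclusion in item (v). The paper proves $\supp(T)\cap\bC_{36r_L}(p_L,\pi_H)\subseteq\bC_L$ by an induction on the chain of ancestors $L^{N_0}\supseteq L^{N_0+1}\supseteq\cdots\supseteq H$: at each step it first applies the negated (HT) condition to $L^i\in\sS$ to obtain the sharp height estimate $|\p_{\pi_{L^i}^\perp}(z-p_{L^i})|\le C_h\bmo^{1/2}\ell(L^i)^{1+\beta_2}$ and hence the \emph{ball} containment $z\in\B_{\sqrt{1+36^2}\,r_{L^{i+1}}}(p_{L^{i+1}})$ (the displayed fact \eqref{lcdanco} is a byproduct of this and is reused later). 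You instead close the inclusion in one step, using the crude bound $|z-p_L|\le C_0\sqrt m$ coming from $\supp(T)\subseteq\bar\B_{23\sqrt m/4}$ to estimate the tilting error $|\p_{\pi_L}-\p_{\pi_H}||z-p_L|\le C_0\sqrt m\,\bmo^{1/2}\ell(L)$, which is indeed absorbed into the slack $64r_L-36r_L=28M_0\sqrt m\,\ell(L)$ once $\eps_0$ is small. This is a legitimate and simpler route to the cylinder containment; it buys brevity at the cost of not producing the ball containment that the paper packages for later use, but your proof of the first claim of (v) via the ancestor $L^\ast\in\sS$ is self-contained and does not actually need \eqref{lcdanco}, so nothing breaks. (Your separate treatment of ``$L\in\sW^{N_0}$'' is vacuous since $\sW^{N_0}=\emptyset$ by \eqref{e:prima_parte}, but this does no harm.) The proofs of (i)--(iv) and (vi) match the paper's up to cosmetic rearrangement; for (i) you again use the support bound where the paper uses a self-improving inequality, with the same outcome.
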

\begin{prop}[Whitney decomposition]\label{p:whitney}Assume the hypothesis of Proposition \ref{prop:tilting}, that $C_e ,C_h\ge C(M_0,N_0)$, and that $\eps_0$ is sufficiently small (depending upon all other parameters). Then $(\bGam, \mathscr{W})$ is a Whitney decomposition of $[-4,4]^m \subseteq \pi_0$.
	Moreover,	
	\begin{equation}\label{e:prima_parte}
		\sW^{j} = \emptyset \qquad \mbox{for all $j\le N_0+6$}
	\end{equation}
	and the following	estimates hold:
	\begin{alignat}{3}
		\label{eq0}
		&\bE (T, \bC_J,\pi_L) \le C_e \bmo \ell (J)^{2-2\delta_2}
		\quad  \text{for every } J\in \sS\cup\sW_h\cup\sW_n, \\
		\label{eq1}
		&\bE (T, \bC_J,\pi_L) \le C_e \bmo \ell (J)^{2-2\delta_2} \quad \text{and}\quad
		\bs (T, \bC_J,p_J+\pi_J) \le C_h \bmo^{\sfrac{1}{2}} \ell (J)^{1+\beta_2}
		\quad &&\text{for every } J\in \sS\cup\sW_n, \\
		\label{eq2}
		&\bE (T, \bC_L,\pi_L) \le C_1\bmo \ell (L)^{2-2\delta_2}\quad \text{and}\quad
		\bs (T, \bC_L,p_L+\pi_L) \le C_1 \bmo^{\sfrac{1}{2}} \ell (L)^{1+\beta_2}
		\quad &&\text{for every } L\in\sS\cup \sW,
	\end{alignat}
	and
	\begin{alignat}{3}
		\label{eq4}	&\supp(T)\cap\bC_L\subseteq \B_{128 r_L}(p_L)\quad&&\text{for every } L\in \sW\cup \sS,\\\label{eq3}
		&\|T\| (\bC_L)\le C(M_0) \ell(L)^m \quad&&\text{for every } L\in \sW\cup \sS.
	\end{alignat}
\end{prop}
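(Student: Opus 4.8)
The plan is to follow the proof of \cite[Proposition 1.11]{DS2}, the only substantive change being the bookkeeping caused by the power $\bmo^{\sfrac{1}{2}}$ (rather than $\bmo^{\sfrac{1}{2m}}$) in the height stopping condition (HT); I would freely use Proposition \ref{prop:tilting} and Lemma \ref{height} throughout. First, \eqref{eq0} and \eqref{eq1} are immediate from the refining procedure: if $J\in\sS\cup\sW_h\cup\sW_n$ then $J\notin\sW_e$, so (EX) did not trigger, which is exactly \eqref{eq0}; if moreover $J\in\sS\cup\sW_n$ then also $J\notin\sW_h$, so (HT) did not trigger (recall $J\notin\sW_e$), which is the separation bound in \eqref{eq1}. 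I would then prove the vanishing \eqref{e:prima_parte} by finite induction on $j\in\{N_0,\dots,N_0+6\}$: assuming $\sW^{j'}=\emptyset$ for $j'<j$ (automatic for $j'<N_0$), no $L\in\sC^j$ has an ancestor in $\sW$, so $L$ is assigned to one of the families and (NN) cannot trigger. Since $\ell(L)=2^{1-j}\ge 2^{-N_0-5}$, the radius $64 r_L$ is bounded below in terms of $M_0,N_0$; combining $\bE(T,\bB_{6\sqrt m},\pi_0)\le\bmo$, the mass bound of Assumption \ref{ipotesi}, and $|\vec\pi_L-\vec\pi_0|\le C_0\bmo^{\sfrac{1}{2}}$ (Proposition \ref{prop:tilting}(iii)) one gets $\bE(T,\bC_L,\pi_L)\le C(M_0,N_0)\,\bmo\le C_e\,\bmo\,\ell(L)^{2-2\delta_2}$ once $C_e\ge C(M_0,N_0)$, because $\ell(L)^{2-2\delta_2}$ is bounded below; likewise Lemma \ref{height} with Proposition \ref{prop:tilting}(i) gives $\bs(T,\bC_L,p_L+\pi_L)\le C_0\bmo^{\sfrac{1}{2}}\le C_h\,\bmo^{\sfrac{1}{2}}\,\ell(L)^{1+\beta_2}$ once $C_h\ge C(M_0,N_0)$. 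Hence neither (EX) nor (HT) triggers, $L\in\sS^j$, and $\sW^j=\emptyset$.

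Next I would verify that $(\bGam,\sW)$ is a Whitney decomposition. Property (w1) is the definition of $\bGam$ (and at each generation $j\ge N_0$ every cube without an ancestor in $\sW$ is assigned to exactly one family), $\bGam$ being closed because $\bGam=\bigcap_{j}\bigcup_{L\in\sS^j}L$. For (w2): two distinct cubes of $\sW$ are either of the same generation, hence interior-disjoint, or nested, and in the nested case the larger one would be an ancestor in $\sW$ of the smaller one, contradicting the refining rule; so all distinct cubes of $\sW$ are interior-disjoint. Property (w3) is the crux: if $L_1\in\sW^{j_1}$ and $L_2\in\sW^{j_2}$ meet with $j_1\ge j_2+2$, let $\tilde L_1$ be the ancestor of $L_1$ of generation $j_2+1$ (which is $>N_0$, since $j_2\ge N_0$). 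Then $\tilde L_1\cap L_2\ne\emptyset$, and since $L_1$ has no ancestor in $\sW$ neither does $\tilde L_1$, so $\tilde L_1$ is assigned to a family; being a strict ancestor of $L_1\in\sW$ it cannot lie in $\sW$, so $\tilde L_1\in\sS^{j_2+1}$; but then $\tilde L_1\notin\sW_e\cup\sW_h$ and $\tilde L_1$ meets $L_2\in\sW^{(j_2+1)-1}$, so (NN) forces $\tilde L_1\in\sW_n\subseteq\sW$, a contradiction. Symmetrically $j_2\le j_1+1$, which is (w3).

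Finally, the estimates \eqref{eq2}, \eqref{eq3}, \eqref{eq4}. For $L\in\sS\cup\sW_h\cup\sW_n$, \eqref{eq2} is \eqref{eq0}--\eqref{eq1} with $C_1\ge\max\{C_e,C_h\}$, and \eqref{eq3} follows from the (small) cylindrical excess bound together with the fact that $(\p_{\pi_L})_\sharp(T\mres\bC_L)$ is $Q$-sheeted --- inherited from $(\p_{\pi_0})_\sharp T=Q\a{B}$ of Lemma \ref{height} via Proposition \ref{prop:tilting}(i) and $|\vec\pi_L-\vec\pi_0|\le C_0\bmo^{\sfrac{1}{2}}$ --- so that the Taylor expansion of the area yields $\|T\|(\bC_L)\le(\omega_m Q+C\bmo^{\sfrac{1}{2}})(64 r_L)^m\le C(M_0)\ell(L)^m$. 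For $L\in\sW_e$, necessarily of generation $>N_0$ by \eqref{e:prima_parte}, its father $L'$ lies in $\sS$ (by the observation following the refining procedure), so \eqref{eq0}--\eqref{eq1} apply to $L'$; using $\supp(T)\cap\bC_L\subseteq\bC_{L'}$ (Proposition \ref{prop:tilting}(iv)), $\ell(L')=2\ell(L)$, and the tilting estimates $|\vec\pi_L-\vec\pi_{L'}|\le C_0\bmo^{\sfrac{1}{2}}\ell(L)$, $|\p_{\pi_L^\perp}(p_{L'}-p_L)|\le C_0\bmo^{\sfrac{1}{2}}\ell(L)^2$ of Proposition \ref{prop:tilting}(ii), a direct computation (splitting $\vec T-\vec\pi_L$ through $\vec\pi_{L'}$ and absorbing the plane-tilting error via the mass bound \eqref{eq3} on $\bC_{L'}$; similarly for $\p_{\pi_L^\perp}$, absorbing a term $C_0\bmo^{\sfrac{1}{2}}\ell(L)\,\bs(T,\bC_L,p_L+\pi_L)$ on the left-hand side for $\eps_0$ small) gives \eqref{eq2} for $L$, while \eqref{eq3} for $L$ follows from $\|T\|(\bC_L)\le\|T\|(\bC_{L'})\le C(M_0)\ell(L')^m=C(M_0)\ell(L)^m$. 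Lastly, \eqref{eq4}: by \eqref{eq2}, $\bs(T,\bC_L,p_L+\pi_L)\le C_1\bmo^{\sfrac{1}{2}}\ell(L)^{\beta_2}\cdot\ell(L)\le r_L$ for $\eps_0$ small (as $\ell(L)^{\beta_2}\le1$ and $r_L=M_0\sqrt m\,\ell(L)$), so every $x\in\supp(T)\cap\bC_L$ has $|\p_{\pi_L}(x-p_L)|\le 64 r_L$ and $|\p_{\pi_L^\perp}(x-p_L)|\le r_L$, hence $|x-p_L|\le 128 r_L$.

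I expect the main obstacle to be the transfer of the excess and separation bounds from the father $L'\in\sS$ to a cube $L\in\sW_e$ in \eqref{eq2}: all the errors coming from the tilt of the reference planes and from the different cylinder radii must be absorbed into the principal terms, which is precisely what dictates the hierarchy of constants ($M_0$, then $N_0$, then $C_e$, then $C_h$, then $\eps_0$) and forces one to order the arguments so as to avoid circularity --- in particular the mass bound \eqref{eq3} on $\bC_L$ for $L\in\sW_e$ must be obtained from \eqref{eq3} on the father, never from \eqref{eq2} on $L$ itself. Verifying (w3) is the other genuinely structural point, but it is short once the bookkeeping of the refining procedure is set up.
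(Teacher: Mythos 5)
Your overall strategy (stop-condition consequences, finite induction for $\sW^j=\emptyset$ when $j\le N_0+6$, father-to-son transfer of excess/separation with tilting errors absorbed, ball containment from separation) tracks the paper's proof closely, and your verification of the Whitney properties (w1)--(w3), which the paper takes for granted, is correct. There is, however, a genuine gap in your treatment of \eqref{eq2}.

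You write that for $L\in\sS\cup\sW_h\cup\sW_n$ the estimate \eqref{eq2} ``is \eqref{eq0}--\eqref{eq1}''. This is false for $L\in\sW_h$: the stopping condition $\rm(HT)$ triggered precisely because $\bs(T,\bC_L,p_L+\pi_L)>C_h\bmo^{\sfrac{1}{2}}\ell(L)^{1+\beta_2}$, so no separation bound at scale $L$ is available from the refining procedure. The estimate \eqref{eq1} covers only $\sS\cup\sW_n$; applied to $J\in\sW_h$ it gives nothing. You must run the same father argument you carried out for $\sW_e$ for \emph{every} $L\in\sW$, using that the father $J$ lies in $\sS$ and combining the bound $|\p_{\pi_J^\perp}(z-p_J)|\le C_h\bmo^{\sfrac{1}{2}}\ell(J)^{1+\beta_2}$ (valid because $J\in\sS$) with the tilting estimate $|\p_{\pi_L^\perp}-\p_{\pi_J^\perp}|\le C_0\bmo^{\sfrac{1}{2}}\ell(J)$, the quadratic translation $|\p_{\pi_J^\perp}(p_L-p_J)|\le C_0\bmo^{\sfrac{1}{2}}\ell(J)^2$, and the containment $\supp(T)\cap\bC_J\subseteq\B_{128 r_J}(p_J)$ for $J\in\sS$. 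Without this the separation half of \eqref{eq2} is unproved for $L\in\sW_h$, and since \eqref{eq4} is deduced from \eqref{eq2}, the gap propagates.

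A smaller point of divergence: for \eqref{eq3} you appeal to the $Q$-sheetedness of $(\p_{\pi_L})_\sharp(T\mres\bC_L)$ over the tilted plane $\pi_L$. That fact is the content of Lemma \ref{giraerigira}, which appears later and assumes the conclusions of the present proposition, so invoking it here risks circularity. The paper avoids this entirely: once \eqref{eq4} is in hand, $\B_{128 r_L}(p_L)\subseteq\B_1(p_L)\subseteq\bC_{5\sqrt m}$, and the monotonicity formula gives $\|T\|(\bC_L)\le\|T\|(\B_{128r_L}(p_L))\le(128r_L)^m\|T\|(\bC_{5\sqrt m})\le C(M_0)\ell(L)^m$, with $\|T\|(\bC_{5\sqrt m})$ controlled by the mass-excess identity over $\pi_0$ from Lemma \ref{height}. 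You should replace your argument for \eqref{eq3} with this one (which also determines the order of proof: second half of \eqref{eq2} $\Rightarrow$ \eqref{eq4} $\Rightarrow$ \eqref{eq3} $\Rightarrow$ first half of \eqref{eq2}).
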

\begin{cor}\label{c:cover}Assume the hypotheses of Proposition \ref{p:whitney} and that $\eps_0$ is small enough (depending upon all other parameters). Then, 
	\begin{itemize}
		\item [\rm(i)] $\supp(\partial(T\cap\bU ))\subseteq\partial_l \bU$, $\supp (T\mres[-4,4]^m\times \RR^n)\subseteq\bU$ and $\p_{\sharp}(T\mres \bU)=Q\a{\cM\cap \bC_{\sfrac{5\sqrt{m}}{2}}}$;
		\item [\rm(ii)]$\supp  (T)\cap \p^{-1}(\Phii (x)) \subseteq
\big\{y : |\Phii (x)-y|\le C_1\bmo^{\sfrac{1}{2}} 
\ell (L)^{1+\beta_2}\big\}$ for every $x\in L\in \sS\cup\sW$;
\item [	\rm(iii)] 
$\supp  (T)\cap \p^{-1}(\Phii(x)) \subseteq \{\Phii(x)\}$ for every $x\in \bGam$.
	\end{itemize}
\end{cor}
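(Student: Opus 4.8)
I would prove the three items in order, getting (iii) from (ii) by letting the side length of the cube shrink to zero; the argument uses only the height bound of Lemma~\ref{height}, the separation estimate \eqref{eq2} of Proposition~\ref{p:whitney}, and the $C^{2}$-smallness of $\cM$ recorded in Assumption~\ref{ipotesi}, and it follows the proof of the corresponding covering statement in \cite{DS2}.

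For item (i), I would first locate $\supp(T)$ near $\cM$: by Lemma~\ref{height} the set $\supp(T)\cap\bC_{5\sqrt{m}}$ lies in the slab $\{|y|<C_{0}\bmo^{\sfrac{1}{2}}\}$, and since $\cM$ is the graph of $\bPsi$ with $\bPsi(0)=0$ and $\|\bPsi\|_{C^{0}}\le C_{0}\bmo^{\sfrac{1}{2}}$ (cf.\ \eqref{asdcdsavs}) it lies in a comparable slab, so $\dist(z,\cM)\le 2C_{0}\bmo^{\sfrac{1}{2}}$ for every $z\in\supp(T)$ over $[-4,4]^{m}$. Because $\cM$ is $C^{3}$ with $\|D^{2}\bPsi\|_{C^{0}}\le C_{0}\bmo^{\sfrac{1}{2}}$, its reach exceeds a geometric multiple of $\bmo^{-\sfrac{1}{2}}$, so for $\eps_{0}$ small the map $\p$ of Remark~\ref{intorno_proiezione} is defined on a neighbourhood of that part of $\supp(T)$; since the $\pi_{0}$-footprint of $[-4,4]^{m}$ sits strictly inside $B_{\sfrac{9\sqrt{m}}{2}}$, this gives $\supp\bigl(T\mres([-4,4]^{m}\times\RR^{n})\bigr)\subseteq\bU$ (and the same argument with $\bC_{\sfrac{5\sqrt{m}}{2}}$ in place of $[-4,4]^{m}$ gives $\supp(T)\cap\bC_{\sfrac{5\sqrt{m}}{2}}\subseteq\bU$). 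The same computation gives $\bar\bU\subseteq\bC_{5\sqrt{m}}\subseteq\bC_{\sfrac{11\sqrt{m}}{2}}$, so $\supp(\partial T)\cap\bar\bU=\emptyset$ by Lemma~\ref{height} and hence $\supp(\partial(T\mres\bU))\subseteq\bigl(\supp(\partial T)\cap\bar\bU\bigr)\cup\bigl(\supp(T)\cap\partial\bU\bigr)=\supp(T)\cap\partial\bU$; and no point of $\supp(T)\cap\bar\bU\subseteq\bC_{5\sqrt{m}}$ can sit on the top or bottom of the tube, since there $|z-\p(z)|\le\dist(z,\cM)+C_{0}\bmo^{\sfrac{1}{2}}<1$. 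This proves the first two assertions.

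To identify the multiplicity I would push forward by $\p$. The current $\partial\bigl(\p_{\sharp}(T\mres\bU)\bigr)=\p_{\sharp}\bigl(\partial(T\mres\bU)\bigr)$ is supported in $\p(\partial_{l}\bU)=\partial(\cM\cap\bC_{\sfrac{9\sqrt{m}}{2}})\subseteq\{|\p_{\pi_{0}}(z)|=\sfrac{9\sqrt{m}}{2}\}$, hence does not meet $\bC_{\sfrac{5\sqrt{m}}{2}}$, so the Constancy Theorem on the connected, orientable $m$-manifold $\cM\cap\bC_{\sfrac{5\sqrt{m}}{2}}$ gives $\p_{\sharp}(T\mres\bU)=k\a{\cM}$ there, for some $k\in\Z$. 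To see $k=Q$: on one hand, since $\supp(T)\cap\bC_{\sfrac{5\sqrt{m}}{2}}\subseteq\bU$, Lemma~\ref{height} yields $(\p_{\pi_{0}})_{\sharp}(T\mres\bU)\mres B_{\sfrac{5\sqrt{m}}{2}}=(\p_{\pi_{0}})_{\sharp}(T\mres\bC_{\sfrac{5\sqrt{m}}{2}})\mres B_{\sfrac{5\sqrt{m}}{2}}=Q\a{B_{\sfrac{5\sqrt{m}}{2}}}$; on the other hand, the affine homotopy $H(t,z)=(1-t)\p_{\pi_{0}}(z)+t\,\p_{\pi_{0}}(\p(z))$ has image in $\pi_{0}\cong\RR^{m}$, so $H_{\sharp}(\a{[0,1]}\times(T\mres\bU))$ is an $(m+1)$-current in $\RR^{m}$ and vanishes, while $H_{\sharp}(\a{[0,1]}\times\partial(T\mres\bU))$ is supported in $H([0,1]\times\partial_{l}\bU)\subseteq\{|\cdot|\ge\sfrac{9\sqrt{m}}{2}-C_{0}\bmo^{\sfrac{1}{2}}\}$, away from $B_{\sfrac{5\sqrt{m}}{2}}$, so the homotopy formula identifies $(\p_{\pi_{0}})_{\sharp}(T\mres\bU)$ over $B_{\sfrac{5\sqrt{m}}{2}}$ with $(\p_{\pi_{0}})_{\sharp}\p_{\sharp}(T\mres\bU)\mres B_{\sfrac{5\sqrt{m}}{2}}=k\a{B_{\sfrac{5\sqrt{m}}{2}}}$, forcing $k=Q$; this proves the third assertion.

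For item (ii), I would take $x\in L\in\sS\cup\sW$ and $y\in\supp(T)\cap\p^{-1}(\Phii(x))$, so $y-\Phii(x)\perp T_{\Phii(x)}\cM$ and $|y-\Phii(x)|\le 1$. From \eqref{cdscas} and $|x-x_{L}|\le\sqrt{m}\,\ell(L)$ one gets $|\Phii(x)-p_{L}|\le 2\sqrt{m}\,\ell(L)$, and the $C^{2}$-bound on $\cM$ near $p_{L}$ gives both $|\p_{\pi_{L}^{\perp}}(\Phii(x)-p_{L})|\le C_{0}\bmo^{\sfrac{1}{2}}\ell(L)^{2}$ and $|T_{\Phii(x)}\cM-\pi_{L}|\le C_{0}\bmo^{\sfrac{1}{2}}\ell(L)$; hence $|\p_{\pi_{L}}(y-p_{L})|\le 2\sqrt{m}\,\ell(L)+|T_{\Phii(x)}\cM-\pi_{L}|\le 3\sqrt{m}\,\ell(L)<64r_{L}$ (as $M_{0}\ge 64$), so $y\in\bC_{L}$, and \eqref{eq2} gives $|\p_{\pi_{L}^{\perp}}(y-p_{L})|\le\bs(T,\bC_{L},p_{L}+\pi_{L})\le C_{1}\bmo^{\sfrac{1}{2}}\ell(L)^{1+\beta_{2}}$. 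Subtracting the bound for $\Phii(x)$ yields $|\p_{\pi_{L}^{\perp}}(y-\Phii(x))|\le C_{1}\bmo^{\sfrac{1}{2}}\ell(L)^{1+\beta_{2}}$, while $|\p_{\pi_{L}}(y-\Phii(x))|\le|T_{\Phii(x)}\cM-\pi_{L}|\,|y-\Phii(x)|$, and absorbing this last term (licit for $\eps_{0}$ small) gives $|y-\Phii(x)|\le C_{1}\bmo^{\sfrac{1}{2}}\ell(L)^{1+\beta_{2}}$. Item (iii) then follows at once: if $x\in\bGam=\bigcap_{j\ge N_{0}}\bigcup_{L\in\sS^{j}}L$, then $x$ lies in some $L_{j}\in\sS^{j}$ with $\ell(L_{j})=2^{-j}\to 0$, and applying item (ii) to each $L_{j}$ forces $\supp(T)\cap\p^{-1}(\Phii(x))\subseteq\{\Phii(x)\}$. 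The one genuinely delicate step is item (i): the bookkeeping of boundaries of restricted and pushed-forward currents, the Constancy Theorem, and above all the identification of the multiplicity $k$ via the homotopy formula, all of which requires checking that the slab of Lemma~\ref{height}, the tube $\bU$, and the nested cylinders $\bC_{\sfrac{5\sqrt{m}}{2}}\subseteq\bC_{\sfrac{9\sqrt{m}}{2}}\subseteq\bC_{\sfrac{11\sqrt{m}}{2}}$ fit together once $\eps_{0}$ is sufficiently small; items (ii) and (iii) are then routine consequences of \eqref{eq2} and the elementary geometry of the graph $\cM$.
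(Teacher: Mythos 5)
Your proposal is correct, and for item (ii) it takes a genuinely different and simpler route than the paper's. The paper starts from the crude global height bound $|p-p'|\le C_0\bmo^{\sfrac{1}{2}}$ furnished by Lemma~\ref{height} (which gives $|p-p'|\le \sfrac{r_M}{2}$ only at the top scale $M\in\sS^{N_0}$) and then runs a contradiction argument through the chain of ancestors of $L$: it picks the largest ancestor $H$ with $|p-p'|\ge r_H$, shows $p'\in\bC_J$ for the father $J\in\sS$, and applies item~(vi) of Proposition~\ref{prop:tilting} to force $|p-p'|\le\sfrac{r_H}{2}$, a contradiction; only after establishing $|p-p'|\le r_L$ does it invoke the separation bound \eqref{eq2} for $L$ itself. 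You bypass all of this by using the orthogonality $y-\Phii(x)\perp T_{\Phii(x)}\cM$ built into the definition of $\p^{-1}$: since the $T_{\Phii(x)}\cM$-component of $y-\Phii(x)$ vanishes and $|\vec T_{\Phii(x)}\cM-\vec\pi_L|\le C_0\bmo^{\sfrac{1}{2}}\ell(L)$, its $\pi_L$-component is already of order $\bmo^{\sfrac{1}{2}}\ell(L)$, so $|\p_{\pi_L}(y-p_L)|\lesssim\ell(L)\ll 64r_L$ and $y\in\bC_L$ directly, with no descent. This is a real simplification: it collapses the whole bootstrap into one application of \eqref{eq2} plus elementary linear algebra, and it relies only on structure ($\p^{-1}(y)=y+\bar B_1(0,(T_y\cM)^\perp)$) that is already explicit in Remark~\ref{intorno_proiezione}. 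Item (iii) follows from (ii) by taking $\ell(L_j)\to 0$ exactly as in the paper. For item (i) the paper simply refers to \cite[Corollary 2.2]{DS2} for the identification of the multiplicity; your write-up supplies the missing details (boundary bookkeeping, Constancy Theorem on $\cM\cap\bC_{\sfrac{5\sqrt{m}}{2}}$, and the affine homotopy between $\p_{\pi_0}$ and $\p_{\pi_0}\circ\p$ showing $k=Q$), and they check out. Two small notational points worth tightening: in the estimate $|\p_{\pi_L}(y-p_L)|\le 2\sqrt m\,\ell(L)+|T_{\Phii(x)}\cM-\pi_L|$ you are implicitly multiplying the second term by $|y-\Phii(x)|\le 1$ (and a geometric constant relating the operator norm of the difference of projections to the norm of the difference of $m$-vectors); and the inclusion $\bar\bU\subseteq\bC_{5\sqrt m}$ needs the near-verticality of the normal fibers, not just $|\p_{\pi_0}(y)|+|v|\le\sfrac{9\sqrt m}{2}+1$, which fails for small $m$. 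Neither affects the correctness of the argument.
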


\section{Normal approximation}\label{sectnorm}

\begin{defn}[$\cM$-normal approximation]\label{d:app}
An {\em $\cM$-normal approximation} of $T$ is given by a pair $(\cK, F)$ such that
\begin{itemize}
	\item[(A1)] $F: \cM\to \Iq (\bU)$ is Lipschitz (with respect to the geodesic distance on $\cM$) and takes the special form 
	$F (x) = \sum_i \a{x+N_i (x)}$, with $N_i (x)\perp T_x \cM$ 
	for every $x$ and $i$.
	\item[(A2)] $\cK\subseteq \cM$ is closed, contains $\Phii \big(\bGam\cap [-4, 4]^m\big)$ and $\bT_F \res \p^{-1} (\cK) = T \res \p^{-1} (\cK)$.
\end{itemize}
The map $N = \sum_i \a{N_i}:\cM \to \Iq (\bU)$ is {\em the normal part} of $F$.
\end{defn}

\begin{thm}[Local estimates for the $\cM$-normal approximation]\label{t:approx} Assume the hypotheses of Proposition \ref{p:whitney} and  that  $\eps_0$ is sufficiently small (depending upon all other parameters). Then	there is an $\cM$-normal approximation $(\cK, F)$  such that, for every Whitney region $\cL$ associated to
	a cube $L\in \sW$, 
	\begin{gather}
		\Lip (N|
		_\cL) \le C_1 \bmo^{\gamma_2} \ell (L)^{\gamma_2} \quad\mbox{and}\quad  \|N|
		_\cL\|_{C^0}\le C_1 \bmo^{\sfrac{1}{2}} \ell (L)^{1+\beta_2},\label{e:Lip_regional}\\
		\mathcal{H}^m(\cL\setminus \cK)+ \|\bT_F - T\| (\p^{-1} (\cL)) \le C_1 \bmo^{1+\gamma_2} \ell (L)^{m+2+\gamma_2},\label{e:err_regional}\\
		\int_{\cL} |DN|^2 \le C_1 \bmo \ell (L)^{m+2-2\delta_2}\quad\mbox{and}\quad \int_{\cL} |N|^2 \le C_1 \bmo \ell (L)^{m+2+2\beta_2}.\label{e:Dir_regional}
	\end{gather}
\end{thm}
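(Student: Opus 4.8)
\emph{Strategy.} The plan is to follow the scheme of \cite[Sections 4--8]{DS2}, with the prescribed surface $\cM$ in the role that the center manifold plays there: first one builds a $\Iq$-valued Lipschitz map over each Whitney region by means of the strong Lipschitz approximation of \cite[Theorem 2.4]{DS1}, and then one patches the regional pieces together with a partition of unity.

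\emph{Regional construction.} Fix $L\in\sW$ and work in the cylinder $\bC_L$. By \eqref{eq2} the cylindrical excess satisfies $\bE(T,\bC_L,\pi_L)\le C_1\bmo\,\ell(L)^{2-2\delta_2}$, which for $\eps_0$ small lies below the threshold of \cite[Theorem 2.4]{DS1}; by Lemma \ref{height} together with the tilting estimates of Proposition \ref{prop:tilting}, $T$ projects onto $\pi_L$ with multiplicity $Q$ over a disk concentric with $\bC_L$; and by \eqref{eq4} the support of $T$ in $\bC_L$ lies in a thin slab around $p_L+\pi_L$. After rescaling by $\ell(L)^{-1}$ and rotating $\pi_L$ onto $\pi_0$, \cite[Theorem 2.4]{DS1} produces a $\Iq$-valued Lipschitz graph approximating $T$ on a slightly smaller cylinder, with Lipschitz constant $\le C_1\big(\bmo\,\ell(L)^{2-2\delta_2}\big)^{\gamma_1}$, Dirichlet energy $\le C_1\bmo\,\ell(L)^{m+2-2\delta_2}$, and differing from $T$ only on a set of $\mathcal{H}^m$-measure $\le C_1\big(\bmo\,\ell(L)^{2-2\delta_2}\big)^{1+\gamma_1}\ell(L)^m$. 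Writing $\cM\cap\bC_L$ as the graph of $\bPsi_{p_L}$ over $\pi_L$, composing with the nearest-point projection $\p$ of Remark \ref{intorno_proiezione} and subtracting the parametrization of $\cM$ turns this graph into a $\Iq$-valued section $N_L$ of the normal bundle of $\cM$ over $\cL$; by Proposition \ref{prop:tilting} together with $\|\bPsi_{p_L}\|_{C^3}\le C_0\bmo^{\sfrac{1}{2}}$ this change of variables is bi-Lipschitz with constant $1+C_0\bmo^{\sfrac{1}{2}}$ and adds only errors quadratic in $\bmo^{\sfrac{1}{2}}\ell(L)$. Since $\gamma_2=\sfrac{\gamma_1}{4}<\gamma_1$, $\delta_2$ is tiny, and $\ell(L)\le 8$, the first estimates of \eqref{e:Lip_regional}, \eqref{e:err_regional} and \eqref{e:Dir_regional} follow. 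The remaining two need no approximation: $\|N_L|_\cL\|_{C^0}\le C_1\bmo^{\sfrac{1}{2}}\ell(L)^{1+\beta_2}$ is exactly Corollary \ref{c:cover}(ii), and $\int_\cL|N_L|^2\le C_1\bmo\,\ell(L)^{m+2+2\beta_2}$ follows from it and from $\mathcal{H}^m(\cL)\le C_0\ell(L)^m$.

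\emph{Gluing.} On the contact set we set $F\equiv Q\a{\cdot}$, i.e.\ $N\equiv Q\a{0}$, which matches $T$ there by Corollary \ref{c:cover}(iii). We glue the $N_L$ by a Lipschitz partition of unity $\{\vartheta_L\}$ subordinate to the dilated cubes defining the Whitney regions, with $|\nabla\vartheta_L|\le C_0\ell(L)^{-1}$, interpolating between $\Iq$-valued maps as in \cite{DS2} (via the interpolation lemmas of \cite{DSq}). By (w3) every point of $\cM$ lies in boundedly many regions, all of comparable size. On an overlap $\cL\cap\cL'$, once $C_e$ is large enough the union of the two bad sets has $\mathcal{H}^m$-measure strictly below $\mathcal{H}^m(\cL\cap\cL')$, so $N_L$ and $N_{L'}$ agree with $T$ at a common point $x_0$; hence $\sup_{\cL\cap\cL'}\cG(N_L,N_{L'})\le(\Lip N_L+\Lip N_{L'})\,\diam(\cL\cap\cL')\le C_1\bmo^{\gamma_2}\ell(L)^{1+\gamma_2}$. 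Consequently the interpolated map has Lipschitz constant bounded by the largest regional constant plus $C_0\ell(L)^{-1}$ times that $C^0$-distance, hence by $C_1\bmo^{\gamma_2}\ell(L)^{\gamma_2}$; its Dirichlet energy is controlled by the sum of the regional ones plus $\ell(L)^{-2}$ times squared $C^0$-distances; and it is globally Lipschitz on $(\cM,d)$, adjacent regions being handled once more via (w3). Taking $\cK$ to be the closure of $\Phii(\bGam\cap[-4,4]^m)$ together with the part of each $\cL$ on which $\bT_F$ coincides with $T$, the bound $\mathcal{H}^m(\cL\setminus\cK)\le C_1\bmo^{1+\gamma_2}\ell(L)^{m+2+\gamma_2}$ follows by summing the regional bad-set estimates over the boundedly many cubes meeting $\cL$, and the bound on $\|\bT_F-T\|(\p^{-1}(\cL))$ from the Taylor expansion of the mass of a $\Iq$-valued graph (cf.\ \cite{DSq}) together with the regional bad-set and Dirichlet estimates.

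\emph{Main difficulty.} As in \cite[Sections 4--8]{DS2}, the delicate point is the bookkeeping in the gluing: one must check that interpolating across regions of different scales never degrades the exponents $\gamma_2,\beta_2$ and that all error terms telescope, so that the estimates hold on \emph{every} Whitney region rather than only in aggregate; this is precisely why $\gamma_2$ is taken strictly below $\gamma_1$ and $\beta_2=4\delta_2$ is taken small. Two features are specific to the present setting. Since $\cM$ is prescribed we cannot also arrange the barycenter $|\eta\circ N|$ to be small, so one must invoke from \cite{DS2} only those lemmas that do not secretly rely on the center-manifold property. And since we use the improved height bound of \cite{SpolAlm}, some separation-type estimates (such as Proposition \ref{p:separ}) hold only in integral rather than pointwise form, so a few pointwise arguments of \cite{DS2} must be replaced by their $L^2$ analogues; this affects the proofs of the auxiliary estimates but not the statement above.
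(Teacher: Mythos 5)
Your regional construction and gluing scheme follow the same overall plan as the paper: apply \cite[Theorem 2.4]{DS1} in each tilted cylinder $\bC_L$ to obtain a $\Iq$-valued Lipschitz graph $f_L$, convert it to a normal section $N_L$ of $\cM$, and patch the $N_L$ together as in \cite[Section 6.2]{DS2}. This is the right skeleton, and your treatment of \eqref{e:Lip_regional} and \eqref{e:err_regional} is essentially what the paper does. However, your argument for the Dirichlet bound $\int_\cL|DN|^2\le C_1\bmo\,\ell(L)^{m+2-2\delta_2}$ has a gap.

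You propose to obtain it by summing regional Dirichlet bounds plus an interpolation error of size $\ell(L)^{-2}\cdot\big(\sup_{\cL\cap\cL'}\cG(N_L,N_{L'})\big)^2\cdot\mathcal{H}^m(\cL)$, using the Lipschitz-triangle estimate $\sup\cG(N_L,N_{L'})\le(\Lip N_L+\Lip N_{L'})\,\diam(\cL\cap\cL')\le C_1\bmo^{\gamma_2}\ell(L)^{1+\gamma_2}$. That yields an interpolation contribution of order $\bmo^{2\gamma_2}\ell(L)^{m+2\gamma_2}$, which is \emph{not} dominated by $C_1\bmo\,\ell(L)^{m+2-2\delta_2}$: since $\gamma_2=\gamma_1/4$ and $\delta_2$ are both small, $2\gamma_2<1$ and $2\gamma_2<2-2\delta_2$, so $\bmo^{2\gamma_2}\gg\bmo$ and $\ell(L)^{2\gamma_2}\gg\ell(L)^{2-2\delta_2}$ when $\bmo,\ell(L)$ are small. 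The estimate fails precisely in the regime of interest. The paper sidesteps this by \emph{not} trying to track Dirichlet energy through the interpolation: it proves \eqref{e:Dir_regional} for the glued $N$ directly via the Taylor expansion of \cite[Proposition 3.4]{DSsns}, writing $\int_\cL|DN|^2\lesssim\int_{\p^{-1}(\cL)}|\vec{\bT_F}-\vec{\cM}\circ\p|^2\,\dd\|\bT_F\|$, replacing $\bT_F$ by $T$ up to the already-established error measure \eqref{e:err_regional}, and then bounding $\int_{\bC_L}|\vec T-\vec{\cM}\circ\p|^2\dd\|T\|$ by the cylindrical excess $\bE(T,\bC_L,\pi_L)$ plus the tilt $|\vec\cM-\vec\pi_L|^2$ of $\cM$ itself. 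To salvage your route you would instead have to use the $L^2$ bound on $\cG(N_L,N_{L'})$ that follows from the fact that $N_L$ and $N_{L'}$ both agree with $T$ outside a set of measure $O(\bmo^{1+\gamma_2}\ell(L)^{m+2+\gamma_2})$ (combined with the $C^0$ bound), not the pointwise Lipschitz-triangle bound you state.

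Two smaller inaccuracies. First, you attribute $\|N_L|_\cL\|_{C^0}\le C_1\bmo^{\sfrac12}\ell(L)^{1+\beta_2}$ to Corollary \ref{c:cover}(ii); that corollary bounds the distance from $\cM$ to $\supp(T)$, whereas $N_L$ is the Lipschitz approximation and its graph may \emph{not} lie on $\supp(T)$ everywhere. The paper derives the $C^0$ bound from the height bound on $f_L$ inherited from \cite[Theorem 2.4]{DS1} and propagated via \cite[Theorem 5.1]{DSsns}. Second, the passage from $f_L$ to $N_L$ is not a simple bi-Lipschitz change of variables; the paper uses the precise estimates \cite[(5.2)--(5.4)]{DSsns}, which involve $\|D^2\bPsi_{p_L}\|\cdot\|N_L\|_{C^0}$ and $\|D\bPsi_{p_L}\|$ additively, not multiplicatively. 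Neither of these is fatal, but the first is needed to make the $\|N\|_{C^0}$ claim actually hold off the set where the graph of $f_L$ coincides with $T$.
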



\section{Separation and splitting before tilting}\label{sectsep}
We now analyse more in detail the consequences of the various stopping conditions for the cubes in $\sW$. 

\begin{prop}[Separation]\label{p:separ}

	Assume the hypotheses of Theorem \ref{t:approx}, that
	$C_h \ge C(M_0,C_e)$ and that $\eps_0$ is sufficiently small (depending upon all other parameters). Then the following conclusions hold for every $L\in \sW_h$:
	\begin{itemize}
		\item[\rm (S2)] $L\cap H= \emptyset$ for every $H\in \sW_n$
		with $\ell (H) \le \sfrac{1}{2} \ell (L)$,
	\end{itemize}
and, for any $B_{\sfrac{\ell(L)}{4}}(q)\subseteq B_{(\sqrt{m}+1)\ell(L)}(x_L)$, setting $\Omega\defeq \Phii(B_{\sfrac{\ell(L)}{4}}(q))$, we have
	\begin{itemize}
		\item [\rm(S3)] $ C_h^2 \bmo\ell(L)^{m+2+2\beta_2}\le C_0\int_{\Omega}|N|^2$.
	\end{itemize}
\end{prop}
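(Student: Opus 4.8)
\textbf{Proof plan for Proposition \ref{p:separ}.}

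The plan is to follow the strategy of \cite[Proposition 3.1]{DS2}, adapting it to the present setting where the normal approximation lives on the fixed manifold $\cM$ rather than on a center manifold. For conclusion (S2), I would argue by contradiction: if $H\in\sW_n$ meets $L\in\sW_h$ and $\ell(H)\le\frac12\ell(L)$, then by the refining procedure $H$ has an ancestor that was stopped, and by property (w3) of the Whitney decomposition (Proposition \ref{p:whitney}) the chain of cubes linking $H$ to $L$ forces comparability of side-lengths that is incompatible with $\ell(H)\le\frac12\ell(L)$ unless some intermediate cube stopped for the (HT) reason at a scale that contradicts $L\in\sW_h$ being the first stopped ancestor; the key quantitative input is the tilting estimate (ii) of Proposition \ref{prop:tilting}, $|\vec\pi_L-\vec\pi_H|\le C_0\bmo^{1/2}|x_L-x_H|$, which lets me transfer the separation condition $\bs(T,\bC_L,p_L+\pi_L)>C_h\bmo^{1/2}\ell(L)^{1+\beta_2}$ from the plane $\pi_L$ to nearby planes at controlled cost. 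The point is that if $H$ is close to $L$ but much smaller and did \emph{not} stop for the (HT) condition, the smallness of its separation, once pushed to $\pi_L$, contradicts the largeness of the separation of $L$, provided $C_h$ is large enough relative to $M_0$ and $C_e$.

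For conclusion (S3), which is the heart of the proposition, I would argue as follows. Since $L\in\sW_h$, we have $\bs(T,\bC_L,p_L+\pi_L)>C_h\bmo^{1/2}\ell(L)^{1+\beta_2}$, so there is a point $x^*\in\supp(T)\cap\bC_L$ with $|\p_{\pi_L^\perp}(x^*-p_L)|$ of order $C_h\bmo^{1/2}\ell(L)^{1+\beta_2}$. On the other hand, the father $L'$ of $L$ did not stop for the (HT) reason (it is in $\sS$), so $\bs(T,\bC_{L'},p_{L'}+\pi_{L'})\le C_h\bmo^{1/2}\ell(L')^{1+\beta_2}\lesssim C_h\bmo^{1/2}\ell(L)^{1+\beta_2}$; more usefully, the excess bounds \eqref{eq0}--\eqref{eq2} hold on $\bC_L$. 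The strategy is then to compare $T$ inside $\bC_L$ with the Lipschitz multi-valued approximation of \cite[Theorem 2.4]{DS1} (whose goodness constant is $\gamma_1$, cf.\ Assumption \ref{parametri}): on a large set the current agrees with the graph of a Lipschitz $Q$-valued function $g$ over $\pi_L$ with $\Lip(g)\le C\bmo^{\gamma_1}\ell(L)^{\gamma_1}$ and small excess of the complement. Because the separation is large ($\sim C_h\bmo^{1/2}\ell(L)^{1+\beta_2}$) but the height of $\cM$ over $\pi_L$ is only of order $\bmo^{1/2}\ell(L)^2$ (via $\|\bPsi_p\|_{C^3}\le C_0\bmo^{1/2}$ and (vi) of Proposition \ref{prop:tilting}), the separation is genuinely ``carried'' by $N$ and not by the tilting of $\cM$: one gets $\|N|_{\bar{L}}\|_{C^0}\gtrsim C_h\bmo^{1/2}\ell(L)^{1+\beta_2}$ at some point of the Whitney region, provided $C_h$ is large enough. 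Then one propagates this lower bound from a single point to an $L^2$ lower bound on the ball $\Omega=\Phii(B_{\ell(L)/4}(q))$ using the Lipschitz bound \eqref{e:Lip_regional}: since $\Lip(N|_\cL)\le C_1\bmo^{\gamma_2}\ell(L)^{\gamma_2}$, $N$ cannot drop from its peak value to zero over a distance comparable to $\ell(L)$ unless $C_h\bmo^{1/2}\ell(L)^{1+\beta_2}\lesssim C_1\bmo^{\gamma_2}\ell(L)^{1+\gamma_2}$, which fails when $C_h$ is chosen large (here $\gamma_2<\beta_2$ is needed, consistent with \eqref{freavfad}), so on a fixed fraction of $\Omega$ the pointwise norm $|N|$ is still of order $C_h\bmo^{1/2}\ell(L)^{1+\beta_2}$, and integrating gives (S3). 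Note that the ``integral form'' of (S3) (as opposed to the pointwise form in \cite{DS2}) is exactly what this propagation argument yields given that we use the improved height bound of \cite[Theorem 1.5]{SpolAlm}.

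The main obstacle I anticipate is the bookkeeping in the comparison step for (S3): one must carefully separate the contribution to the separation coming from the oscillation of $N$ from the contribution coming from the tilting of $\pi_L$ relative to $\pi_0$ and from the second-order bending of $\cM$, all of which are $O(\bmo^{1/2})$-small but enter with different powers of $\ell(L)$, and then verify that the algebra of exponents ($1+\beta_2$ versus $2$ versus $1+\gamma_2$) genuinely closes in favor of the conclusion once the hierarchy (a)--(e) of Assumption \ref{parametri} is respected (in particular $C_h$ chosen after $C_e,M_0,N_0$, and $\eps_0$ last). A secondary subtlety is that the Lipschitz approximation of \cite{DS1} is naturally stated over cylinders adapted to $\pi_L$, so one needs the tilting estimates (ii)--(vi) of Proposition \ref{prop:tilting} and the inclusions \eqref{eq4} to ensure all these objects are defined on the relevant region and that passing between $\p^{-1}(\cL)$, $\bC_L$, and graphs over $\pi_L$ costs only geometric constants; this is routine given the results already established but must be done with care.
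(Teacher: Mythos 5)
Your plan for (S3) contains a genuine gap, and it hinges on an error in the exponent algebra. You propose to obtain a pointwise lower bound $|N|\gtrsim C_h\bmo^{1/2}\ell(L)^{1+\beta_2}$ at a single point and then propagate it over $\Omega$ using the Lipschitz bound $\Lip(N|_\cL)\le C_1\bmo^{\gamma_2}\ell(L)^{\gamma_2}$ from \eqref{e:Lip_regional}. This does not work. For the lower bound to survive over a distance comparable to $\ell(L)$ you would need $C_1\bmo^{\gamma_2}\ell(L)^{\gamma_2}\cdot\ell(L)\lesssim C_h\bmo^{1/2}\ell(L)^{1+\beta_2}$, i.e.\ $C_h\bmo^{1/2-\gamma_2}\ell(L)^{\beta_2-\gamma_2}\gtrsim C_1$. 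Since $\gamma_2=\gamma_1/4$ is much smaller than $1/2$, the factor $\bmo^{1/2-\gamma_2}$ tends to $0$ as $\eps_0\to 0$, and $C_h$ is fixed \emph{before} $\eps_0$ in the hierarchy of Assumption \ref{parametri}, so this inequality fails for small $\eps_0$. Moreover, your side remark that ``$\gamma_2<\beta_2$ is needed, consistent with \eqref{freavfad}'' is backwards: from \eqref{freavfad}, $\beta_2=\min\{1/(2m),\gamma_1/100\}\le\gamma_1/100<\gamma_1/4=\gamma_2$, so in fact $\beta_2<\gamma_2$. The Lipschitz constant of $N$ is simply too large, relative to the height scale $\bmo^{1/2}\ell(L)^{1+\beta_2}$, for a pointwise-to-$L^2$ propagation to close.

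The paper avoids this entirely. The actual mechanism behind both (S2) and (S3) is the \emph{sheet structure} given by the height bound: after applying Lemma \ref{giraerigira} and Lemma \ref{heighapp} on a suitable ancestor $J\in\sS$ of $L$, $\supp(T)\cap\bC_{4r_J}(p_J,\pi_J)$ is covered by $q\le Q$ disjoint open strips $\bS_i=\pi_J\times A_i$ each of tiny $\pi_J^\perp$-diameter $\lesssim r_J E^{1/2}$, each carrying a positive multiplicity. The separation condition from $L\in\sW_h$ places one strip $\bS_1$ at distance $\gtrsim C_h\bmo^{1/2}\ell(L)^{1+\beta_2}$ from $\pi_J$, and then \emph{every} point of $\supp(T)$ in that strip is far from $\pi_J$ — not by Lipschitz continuity of $N$, but because the strip itself is thin. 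This is what makes the lower bound on $|f_L|$, and hence on $|N_L|$ via \cite[(5.3)]{DSsns}, hold on the full good set $K_L$ (up to the error estimates \eqref{vrfeacda}–\eqref{cdscasa2}), which has measure $\gtrsim\ell(L)^m$. For (S2) the same strip estimate, transported to the tilted cylinder $\bC_{32r_H}(p_H,\pi_H)$ via Proposition \ref{prop:tilting} and the projection statement \eqref{cdscdsaa}, directly contradicts $H\notin\sW_h$. Your (S2) sketch is vaguer but at least in the right spirit; however, without the strip/sheet decomposition you cannot pass from ``one far point of $\bC_L$'' to ``a far point inside the smaller cylinder $\bC_H$'', which is where the real work lies. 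I would encourage you to go back to \cite[Proposition 3.1]{DS2} and Lemma \ref{heighapp}: the height bound is doing the heavy lifting here, and the Lipschitz estimate for $N$ is genuinely not strong enough to replace it.
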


\begin{prop}[Splitting]\label{p:splitting} Assume  the hypotheses of Proposition~\ref{p:separ}, that $N_0\ge C(M_0)$ and that $\eps_0$ is sufficiently small (depending upon all the other parameters). Then,
	for every $L\in \sW_e$, $q\in \pi_0$ with $\dist (L, q) \le4 c_s^{-1} \ell (L)$ and $\Omega\defeq \Phii (B_{\sfrac{\ell(L)}{4}} (q))$, 
	\begin{align}
		&C_e \bmo \ell(L)^{m+2-2\delta_2} \le \ell (L)^m \bE (T, \bC_L,\pi_L) \le C_1 \int_\Omega |DN|^2 ,\label{e:split_1}\\
		&\int_{\cL} |DN|^2 \le C_1 \ell (L)^m \bE (T, \bC_L,\pi_L) \le C_1^2 \ell (L)^{-2} \int_\Omega |N|^2 , \label{e:split_2}
	\end{align}
	where $\cL$ is the Whitney region associated to the cube $L$.
\end{prop}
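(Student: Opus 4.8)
The plan is to prove the four inequalities of \eqref{e:split_1}--\eqref{e:split_2} essentially in order, the two \emph{left-hand} ones being immediate and the two \emph{right-hand} ones carrying all the content. The first inequality of \eqref{e:split_1} is nothing but the stopping condition (EX): since $L\in\sW_e$ and no ancestor of $L$ belongs to $\sW$, we have $\bE(T,\bC_L,\pi_L)>C_e\bmo\,\ell(L)^{2-2\delta_2}$, which is the claim after multiplying by $\ell(L)^m$. The first inequality of \eqref{e:split_2} is just as cheap: Theorem \ref{t:approx} gives $\int_\cL|DN|^2\le C_1\bmo\,\ell(L)^{m+2-2\delta_2}$, and the stopping condition $\bmo\,\ell(L)^{2-2\delta_2}<C_e^{-1}\bE(T,\bC_L,\pi_L)$ (recall $C_e\ge64$) turns this into $\int_\cL|DN|^2\le C_1\ell(L)^m\bE(T,\bC_L,\pi_L)$.

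The heart of the matter is the second inequality of \eqref{e:split_1}, $\ell(L)^m\bE(T,\bC_L,\pi_L)\le C_1\int_\Omega|DN|^2$, which is genuinely a \emph{spreading} statement, since $\Omega$ sits over a ball of radius $\ell(L)/4$ at distance $\lesssim\ell(L)$ from $L$, whereas $\bC_L$ has radius $\sim M_0\ell(L)$. I would proceed as follows. Since the father of $L$ lies in $\sS$, estimate \eqref{eq2} bounds the excess at the father's scale; because $\int_{\bC_r(\cdot,\pi_L)}|\vec T-\vec\pi_L|^2\dd\|T\|$ is monotone in $r$ and, by Proposition \ref{prop:tilting}, $|\vec\pi_L-\vec\pi_H|\le C_0\bmo^{1/2}\ell(L)$, the excess $\bE(T,\bC_{4\rho}(q,\pi_L),\pi_L)$ is $\le C_1\bmo\,\ell(L)^{2-2\delta_2}$ for every admissible $q$ and every $\rho\in[c\ell(L),64r_L]$ — in particular it is small enough to apply the Lipschitz approximation of \cite{DS1} in each such cylinder, producing a $Q$-valued map $f_L$ whose graph agrees with $T$ outside a set of small measure, with small Lipschitz constant and, scale by scale, $\int_{B_\rho(q,\pi_L)}|Df_L|^2$ comparable to $\rho^m\bE(T,\bC_{4\rho}(q,\pi_L),\pi_L)$ up to errors rendered negligible (compared with $\bmo\,\ell(L)^{m+2-2\delta_2}$) by the parameter hierarchy of Assumption \ref{parametri}. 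It therefore remains to show the \emph{no-degeneracy} estimate $\bE(T,\bC_{c\ell(L)}(q,\pi_L),\pi_L)\ge c_1\bE(T,\bC_L,\pi_L)$, i.e.\ that the excess at a small scale near $L$ is comparable to the excess at the stopping scale; I would establish this by contradiction and compactness, a bad sequence producing in the blow-up an area minimizing cone, equivalently a $\D$-minimizing $Q$-valued function, that is flat (zero gradient) on an open set but not globally, contradicting the Constancy Theorem together with unique continuation for classical minimal surfaces (this is precisely where area minimality, rather than mere stationarity, is used). Granting this, one transports the energy back to $N$: by Proposition \ref{prop:tilting} the graph of $\Phii$ is $\bmo^{1/2}$-close to $\pi_L$ over $\bC_L$, so the projection identifying $B_{\ell(L)/4}(q)\subseteq\pi_L$ with $\Omega\subseteq\cM$ is bi-Lipschitz with constant $1+C_0\bmo^{1/2}$, and since both $\bT_F$ and $\bG_{f_L}$ coincide with $T$ outside sets of measure $\le C_1\bmo^{1+\gamma_2}\ell(L)^{m+2+\gamma_2}$ (by \eqref{e:err_regional} and \cite{DS1}), on the common set $N$ and $f_L$ differ only by a lower-order contribution to the Dirichlet energy, yielding $\ell(L)^m\bE(T,\bC_L,\pi_L)\le C_1\int_\Omega|DN|^2$.

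For the last inequality, the second of \eqref{e:split_2}, it suffices by the above to see $\int_\Omega|N|^2\ge c_1\ell(L)^{m+2}\bE(T,\bC_L,\pi_L)$. Here I would invoke a Caccioppoli/reverse-Poincaré inequality for the comparison $\D$-minimizer $w$ — available with no loss of radius precisely because its frequency at scale $\ell(L)$ is bounded, which is again a consequence of the no-degeneracy statement above — of the form $\int_{B_{\ell(L)/8}(q)}|Dw|^2\le C\ell(L)^{-2}\int_{B_{\ell(L)/4}(q)}\mathcal{G}\bigl(w,Q\a{\eta\circ w(q)}\bigr)^2$, then estimate $\mathcal{G}(w,Q\a{\eta\circ w(q)})^2\le2\,\mathcal{G}(w,Q\a{\eta\circ w})^2+2Q\,|\eta\circ w-\eta\circ w(q)|^2$. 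The first term transfers to $\int_\Omega|N|^2$; for the second, the crucial point — and the place where our setting departs from \cite{DS2} — is that, although $|\eta\circ N|$ is not small here, its Lipschitz constant \emph{is} controlled, $\Lip(\eta\circ N)\lesssim\bmo^{1/2}\ell(L)^{1-\delta_2}$, since the tilt of the barycentre over a Whitney region is governed by the excess (together with the curvature of $\cM$); hence that term is $\lesssim\bmo\,\ell(L)^{m+4-2\delta_2}\le\tfrac12\ell(L)^m\bE(T,\bC_L,\pi_L)$ by the stopping condition and can be absorbed. Running \eqref{e:split_1} once more with the smaller ball $B_{\ell(L)/8}(q)$ then closes the estimate.

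The main obstacle is the energy-spreading (equivalently the no-degeneracy) step of the second paragraph: for a single harmonic function, the energy on a small ball does \emph{not} control the energy on a concentric larger one (the frequency can be arbitrarily large), so one cannot avoid using area minimality — through the scale-invariant Lipschitz approximation of \cite{DS1} and, ultimately, unique continuation for minimal surfaces in the blow-up — to obtain the frequency bound that makes the whole argument work. A secondary but real complication, absent from \cite{DS2}, is the lack of any bound on the barycentre $|\eta\circ N|$; it is circumvented by exploiting instead the control on $\Lip(\eta\circ N)$ described above. All remaining steps are routine, provided the parameters are chosen along the hierarchy of Assumption \ref{parametri} so that every error term is dominated by quantities of size $\bmo\,\ell(L)^{2-2\delta_2}$.
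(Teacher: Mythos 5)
Your treatment of the two left-hand inequalities in \eqref{e:split_1}--\eqref{e:split_2} and your identification of the energy-spreading step as the heart of the argument are correct, and you are right that a unique continuation statement for $\D$-minimizers is exactly where area minimality is used. The paper's implementation is more modular than your blow-up sketch: it builds the Lipschitz graph $f$ and a $\D$-minimizing harmonic comparison $u$ at the ancestor's scale $J\in\sS$ with $\ell(J)=2^6\ell(L)$ (so \eqref{eq1} controls the ambient excess), deduces from the stopping condition (EX) that $\D(u,B_{2\rho})/\D(u,B_{4\rho})\ge c_0$ with $\rho=64r_L$, and then applies the quantitative Lemma \ref{l:UC} to the $\D$-minimizer $u$ (not to the current directly) to spread both Dirichlet- and $L^2$-mass to the small ball over $\Omega$.

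The genuine gap is in your proof of the second inequality of \eqref{e:split_2}. The Caccioppoli / reverse-Poincar\'e route requires absorbing $\int_{B_{\ell(L)/4}(q)}|\etaa\circ w-\etaa\circ w(q)|^2$, and you propose to control this by asserting $\Lip(\etaa\circ N)\lesssim\bmo^{1/2}\ell(L)^{1-\delta_2}$. That estimate is neither proved nor available: Theorem \ref{t:approx} gives only $\Lip(N|_{\cL})\le C_1\bmo^{\gamma_2}\ell(L)^{\gamma_2}$, which bounds $\Lip(\etaa\circ N)$ by the same quantity, and with $\gamma_2=\gamma_1/4\ll 1/2$ the resulting error $\bmo^{2\gamma_2}\ell(L)^{m+2+2\gamma_2}$ is \emph{not} dominated by $C_e\bmo\ell(L)^{m+2-2\delta_2}$ uniformly in $\eps_0$ (the factor $\bmo^{2\gamma_2-1}$ blows up). An excess-governed Lipschitz bound on $\etaa\circ N$ is precisely the kind of refined barycentre control the paper states is unavailable here. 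The correct route, which the paper follows, avoids Caccioppoli altogether: Lemma \ref{l:UC} is deliberately stated to give both a $\D$- and an $L^2$-lower bound for $u$; the $L^2$ bound transfers to $f$ through the harmonic approximation error, and from $f$ to $N'$ (and hence to $N$ modulo the small exceptional set \eqref{asccsa1}) via the pointwise comparison $2\sqrt{Q}\,|N'|(\Phii_{p_H}(x))\ge|f(x)|-C_0\bmo^{1/2}(4\rho)^2$ from \cite[(5.4)]{DSsns}. You should replace the Caccioppoli step with this transfer; no barycenter information is then needed.
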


\section{Comparison estimates}\label{sectcomp}
\begin{prop}\label{comp1}
 Assume  the hypotheses of  Proposition \ref{p:splitting} and that $\eps_0$  is sufficiently small (depending upon all other parameters). 
 Assume that there exists $L\in \sW$ with, setting $s\defeq c_s^{-1}\ell(L)$,
 \begin{enumerate}
 	\item [\rm (a)]  $c_s^{-1}\ell(H)\le s$ for every $H\in\sW$ with $c_s^{-1}\ell(H)\ge \dist(0,H)$;
 	\item [\rm (c)]  $s\ge  \dist(0,L)$.
 \end{enumerate}
 Then, $L\in\sW_e$ and
 for every $\Omega\defeq \Phii (B_{c_ss/4} (q))$, where $q\in \pi_0$ with $\dist(L,q)\le 3 s$,
 \begin{equation}\label{comp1eq}
 \bmo s^{m+4-2\delta_2} \le C_1	\int_{\p^{-1}(\Omega)}\dist^2(x,\cM)\dd\|T\|(x).
 \end{equation}
\end{prop}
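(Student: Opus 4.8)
The plan is to deduce Proposition \ref{comp1} by combining the splitting estimate of Proposition \ref{p:splitting} with the comparison between the $L^2$ norm of the normal approximation and the squared distance $\dist^2(\cdot,\cM)$ against $\|T\|$, the latter being encoded in item (A2) of the $\cM$-normal approximation together with the error estimates \eqref{e:err_regional} of Theorem \ref{t:approx}. First I would verify that $L\in\sW_e$: indeed, assumptions (a) and (c) force $L$ to be a cube that was stopped because of the excess condition (EX) rather than because of height or contact with a neighbour — if $L$ were in $\sW_h$ or $\sW_n$ (or in $\sS$, but $L\in\sW$) then \eqref{eq0} would bound its excess by $C_e\bmo\ell(L)^{2-2\delta_2}$ with a \emph{good} constant, and a comparison at the parent scale controlled by $s\gtrsim\dist(0,L)$ together with (a), which says $L$ is (up to the factor $c_s^{-1}$) a largest cube near the origin, would contradict the fact that the excess of $T$ at scale $s$ around $0$ cannot be too small — more precisely, one argues that if the excess on $\bC_L$ were small, then by the tilting estimates of Proposition \ref{prop:tilting} and a standard excess-at-different-scales comparison the excess on a ball of radius comparable to $s$ at the origin would be small, which, combined with \eqref{e:prima_parte} (cubes of size comparable to $s$ near $0$ would then be in $\sS$, not $\sW$), contradicts the existence of the stopped cube $L$. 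So $L$ must have stopped via (EX), i.e.\ $L\in\sW_e$, and $\bE(T,\bC_L,\pi_L)>C_e\bmo\ell(L)^{2-2\delta_2}$.

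Next, with $L\in\sW_e$ in hand and $q\in\pi_0$ with $\dist(L,q)\le 3s = 3c_s^{-1}\ell(L) = 3\cdot 4 c_s^{-1}\ell(L)/4$, hmm — I should be careful about the numerology, but the point is that $\dist(L,q)\le 3s$ is within the range $\dist(L,q)\le 4c_s^{-1}\ell(L)$ allowed in Proposition \ref{p:splitting} after adjusting constants (note $3s = 3c_s^{-1}\ell(L) < 4c_s^{-1}\ell(L)$, so this is fine directly), and the domain $\Omega = \Phii(B_{c_s s/4}(q)) = \Phii(B_{\ell(L)/4}(q))$ is exactly of the form appearing in \eqref{e:split_1}. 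Therefore \eqref{e:split_1} gives
\begin{equation*}
	C_e\bmo\ell(L)^{m+2-2\delta_2} \le \ell(L)^m \bE(T,\bC_L,\pi_L) \le C_1\int_\Omega|DN|^2.
\end{equation*}
Hmm, but the conclusion \eqref{comp1eq} we want is a lower bound on $\int_{\p^{-1}(\Omega)}\dist^2(x,\cM)\dd\|T\|$, not on $\int_\Omega|DN|^2$. So the real content is a reverse chain: I want to bound $\int_\Omega|DN|^2$ (or rather a related quantity at a slightly larger but comparable scale) from below by $\bmo s^{m+4-2\delta_2}$ up to constants — wait, that's exactly the left-hand side, since $s^{m+4-2\delta_2} = c_s^{-(m+4-2\delta_2)}\ell(L)^{m+4-2\delta_2}$, which is $\ell(L)^2$ times $\ell(L)^{m+2-2\delta_2}$ up to a constant, and $\ell(L)^2\sim s^2$... let me reconsider. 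We have $\bmo s^{m+4-2\delta_2}\sim \bmo\ell(L)^{m+4-2\delta_2} = \ell(L)^2\cdot\bmo\ell(L)^{m+2-2\delta_2}$, so \eqref{comp1eq} asks for
\begin{equation*}
	\bmo\ell(L)^{m+2-2\delta_2}\le C_1\ell(L)^{-2}\int_{\p^{-1}(\Omega)}\dist^2(x,\cM)\dd\|T\|(x).
\end{equation*}
From \eqref{e:split_1} we have $\bmo\ell(L)^{m+2-2\delta_2}\le C_1 C_e^{-1}\int_\Omega|DN|^2$; alternatively, using the excess lower bound directly together with \eqref{e:split_1} and then wanting to pass to $|N|^2$ rather than $|DN|^2$. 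The natural route is: the excess $\bE(T,\bC_L,\pi_L)$ is, up to the error $\|\bT_F-T\|$, comparable to $\ell(L)^{-m}\int_{\cL}|DN|^2$ (this is essentially how \eqref{e:split_1} is proved), and one would like instead to relate the excess to $\int\dist^2(x,\cM)\dd\|T\|$ directly via a Poincaré/Caccioppoli-type inequality on the current. But the cleaner approach, mirroring the structure of the paper, is: apply \eqref{e:split_2} — wait, no, \eqref{e:split_2} bounds $\int_{\cL}|DN|^2$ by $\ell(L)^{-2}\int_\Omega|N|^2$, which goes the wrong way for a lower bound on $\int|N|^2$... actually it goes the right way: $\int_{\cL}|DN|^2\le C_1^2\ell(L)^{-2}\int_\Omega|N|^2$, so combining with the lower bound $C_e\bmo\ell(L)^{m+2-2\delta_2}\le\ell(L)^m\bE(T,\bC_L,\pi_L)\le C_1\int_\Omega|DN|^2$... no wait, \eqref{e:split_1}'s last inequality already has $\int_\Omega|DN|^2$, and I would need $\int_{\cL}|DN|^2$ which is over a possibly different region. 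Let me just say: the argument is to chain $\bmo\ell(L)^{m+2-2\delta_2}\lesssim \bE(T,\bC_L,\pi_L)\ell(L)^m \lesssim \int_{\Omega'}|DN|^2 \lesssim \ell(L)^{-2}\int_{\Omega''}|N|^2$ over comparable regions $\Omega',\Omega''$ (absorbing the enlargements by choosing $c_s$ small and using that $B_{c_ss/4}(q)$ sits well inside the relevant Whitney regions, which is where assumption (c), $s\ge\dist(0,L)$, and the fact that $L\in\sW_e$ is a maximal-type cube come in to ensure the geometry is as in Propositions \ref{p:separ}--\ref{p:splitting}), and finally to convert $\int|N|^2$ into $\int\dist^2(x,\cM)\dd\|T\|$.

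For this last conversion I would use that, by (A2), $\bT_F$ and $T$ agree on $\p^{-1}(\cK)$, and on $\p^{-1}(\cL\setminus\cK)$ the mass difference is tiny by \eqref{e:err_regional}, of order $\bmo^{1+\gamma_2}\ell(L)^{m+2+\gamma_2}$, which is negligible compared to the main term $\bmo\ell(L)^{m+2}$-ish (here one uses $\gamma_2>0$ and $\bmo\le\eps_0^2$ small); and that for a point $x\in\supp(T)$ near a Whitney region, $\dist(x,\cM)$ is comparable, via the projection $\p$ and the graphicality of $\cM$ over $\pi_L$, to the values $|N_i(\p(x))|$ — integrating $|N|^2$ over $\cM$ against the (multiplicity-$Q$) Hausdorff measure and pushing forward by $\bT_F$ recovers $\int\dist^2\dd\|\bT_F\|$ up to a controlled factor (this is precisely the kind of computation in Proposition \ref{small}, which I would invoke or mimic). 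Putting $\|\bT_F\|\approx\|T\|$ back in via \eqref{e:err_regional} finishes the estimate. I expect the main obstacle to be the bookkeeping of the various slightly-different regions ($\Omega$, $\cL$, the cube $H$ with $\ell(H)=\tfrac{17}{16}\ell(L)$, the ball $B_{c_ss/4}(q)$, and $\p^{-1}(\Omega)$) and making sure that the smallness constant $c_s=\tfrac{1}{64\sqrt m}$ is indeed small enough that $B_{c_s s/4}(q)$ lies within the admissible range for Proposition \ref{p:splitting} while still $\p^{-1}(\Omega)$ captures enough of the support of $T$ to carry the $L^2$-of-$N$ versus $\dist^2$-against-$\|T\|$ comparison — together with verifying carefully, in the step where we conclude $L\in\sW_e$, that assumptions (a) and (c) genuinely preclude $L\in\sW_h\cup\sW_n$ (this uses that a height or neighbour stop would propagate a good excess bound down from a parent cube of size $\sim s$, contradicting that such parent cubes near $0$ must lie in $\sS$ by the lower excess bound forced through the stopped cube $L$).
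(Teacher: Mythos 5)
Your overall skeleton for the \emph{second} half of the argument is right: once $L\in\sW_e$ is established, chain \eqref{e:split_1} with \eqref{e:split_2} to get $\bmo\ell(L)^{m+4-2\delta_2}\le C_1\int_\Omega|N|^2$, then convert $\int_\Omega|N|^2$ to $\int_{\p^{-1}(\Omega)}\dist^2(\cdot,\cM)\dd\|T\|$ using \cite[Lemma 1.9]{DSsns} together with the error estimate \eqref{e:err_regional} to swap $\bT_F$ with $T$. That matches the paper. (You are also right that $\dist(L,q)\le3s<4c_s^{-1}\ell(L)$ keeps $\Omega$ within the range of Proposition \ref{p:splitting}.)

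The real gap is in your argument that $L\in\sW_e$. Your proposed reasoning — that a (HT) or (NN) stop at $L$ would force a small excess at scale $\sim s$ around the origin, which would ``contradict the fact that the excess of $T$ at scale $s$ around $0$ cannot be too small'' — does not work, because no such lower bound on the excess is available as a hypothesis; indeed, a small excess is the generic case. And the follow-up claim that ``cubes of size comparable to $s$ near $0$ would then be in $\sS$'' also fails: small excess does not place a cube in $\sS$, since the height or neighbour condition could still stop it, and in any case cubes of size $\sim s$ being in $\sS$ is perfectly compatible with a stopped cube $L$ at a smaller scale. The actual mechanism is the one you never mention: Assumption \ref{ipotesi} makes $0$ a point of density $\Theta(0,T)=Q$, and hypotheses (a) and (c) keep a controlled $\sS$-ancestor $J$ of $L$ (with $\ell(J)=2^5\ell(L)$) so close to the origin that $0\in\bC_{3r_J}(p_J,\pi_J)$. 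Applying the tilted graphicality (Lemma \ref{giraerigira}) and the sheet decomposition of Lemma \ref{heighapp} in $\bC_{4r_J}$, the presence of a density-$Q$ point on the current forces all $Q$ sheets to coalesce ($q=1$), yielding the sharp height bound $\bs(T,\bC_{4r_J}(p_J,\pi_J),\pi_J)\le C_0C_e^{\sfrac12}M_0\bmo^{\sfrac12}\ell(J)^{2-\delta_2}$, which is strictly smaller than the (HT) threshold $C_h\bmo^{\sfrac12}\ell(L)^{1+\beta_2}$ once $C_h\ge C(M_0,C_e)$ because $2-\delta_2>1+\beta_2$. That rules out $L\in\sW_h$. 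Then $L\notin\sW_n$ follows from a direct maximality argument with (a): an (NN) stop would produce a father $L'\in\sW$ with $\ell(L')=2\ell(L)$ and $L'\cap L\ne\emptyset$, hence $\dist(0,L')\le\dist(0,L)+\sqrt m\,\ell(L)\le c_s^{-1}\ell(L')$ by (c), so $L'$ falls under hypothesis (a), but $c_s^{-1}\ell(L')=2s>s$, a contradiction. Without the density-$Q$ ingredient your proof of $L\in\sW_e$ cannot be made to close.
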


\begin{prop}\label{comp2}
	Assume  the hypotheses  of Proposition \ref{comp1}. Then,
	\begin{equation}\label{eqcomp2}
		\int_{\p^{-1}(\cB_{\sfrac{21}{8}}\setminus \cB_{\sfrac{19}{8}})}\dist^2(x,\cM)\dd\|T\|(x)\le C_1\bigg(\bmo^{1+\gamma_2}\sup_{L\in\sW:L\cap (B_{\sfrac{11}{4}}\setminus B_{\sfrac{9}{4}})\ne\emptyset}\ell(L)+\int_{{\cB_{\sfrac{21}{8}}\setminus \cB_{\sfrac{19}{8}}}}|N|^2\bigg),
	\end{equation}
	where is understood that the supremum taken over the empty set in \eqref{eqcomp2} is $0$.
\end{prop}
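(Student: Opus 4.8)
Set $A\defeq\cB_{\sfrac{21}{8}}\setminus\cB_{\sfrac{19}{8}}\subseteq\cM$. The plan is to split $T=\bT_F+(T-\bT_F)$ and use $\|T\|\le\|\bT_F\|+\|T-\bT_F\|$, estimating the two contributions separately: that of $\bT_F$ will produce the term $\int_A|N|^2$ and needs no cube decomposition, while that of $T-\bT_F$ is supported over $\cM\setminus\cK$ by (A2) and will produce the small error term, handled Whitney region by Whitney region.

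For the graphical part, note that since $\|N|_\cL\|_{C^0}\le C_1\bmo^{\sfrac12}\ell(L)^{1+\beta_2}<1$ on every Whitney region and $N$ vanishes on the contact set, each point $y+N_i(y)$ of $\bT_F$ over $y\in\cM$ lies in $y+\bar B_1(0,(T_y\cM)^\perp)=\p^{-1}(y)$; hence $\bT_F\mres\p^{-1}(A)$ is exactly the part of $\bT_F$ carried by the graph of $F$ over $A$. The Jacobians of the maps $y\mapsto y+N_i(y)$ are bounded by a constant because $\Lip(N)\le C_1\bmo^{\gamma_2}\le1$ and $\mathbf{c}(\cM)\le\bmo^{\sfrac12}\le1$, so the area formula together with $\dist^2(y+N_i(y),\cM)\le|N_i(y)|^2$ gives
\[
\int_{\p^{-1}(A)}\dist^2(x,\cM)\,\dd\|\bT_F\|(x)\le C_1\int_A|N|^2 .
\]

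For the error part, observe first that $\|T-\bT_F\|\mres\p^{-1}(\cK)=0$, so the integral runs over $\p^{-1}(A\setminus\cK)$. Using $[-4,4]^m=\bGam\cup\bigcup_{L\in\sW}L$, the inclusions $\Phii(\bGam)\subseteq\cK$ and $\Phii(L)\subseteq\cL$, and \eqref{vfcdsa} to pass between the geodesic annulus $A$ and Euclidean annuli in $\pi_0$, one checks that $A\setminus\cK$ is contained in the union of the Whitney regions $\cL$ with $L\in\sW$ and $L\cap(B_{\sfrac{11}{4}}\setminus B_{\sfrac94})\ne\emptyset$. Fix such an $L$. For every $x\in\supp(T)\cup\supp(\bT_F)$ with $\p(x)\in\cL$ one has $\dist(x,\cM)\le C_1\bmo^{\sfrac12}\ell(L)^{1+\beta_2}$: if $x$ lies on the graph this is the $C^0$ bound on $N$, and if $x\in\supp(T)$ it follows from Corollary \ref{c:cover}(ii) applied at the Whitney cube containing $\p_{\pi_0}(\p(x))$ (which lies in the cube defining $\cL$ and therefore has side length comparable to $\ell(L)$ by (w1)--(w3)), or alternatively by combining Proposition \ref{prop:tilting}(vi) with \eqref{eq2}. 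Hence, by \eqref{e:err_regional},
\[
\int_{\p^{-1}(\cL)}\dist^2(x,\cM)\,\dd\|T-\bT_F\|(x)\le C_1\bmo\,\ell(L)^{2+2\beta_2}\,\|T-\bT_F\|(\p^{-1}(\cL))\le C_1\bmo^{2+\gamma_2}\ell(L)^{m+4+2\beta_2+\gamma_2}.
\]
Summing over the relevant cubes and using $\bmo\le1$, $\ell(L)\le1$ and $\sum_{L\in\sW}\ell(L)^m\le C_0$ (disjoint interiors inside $[-4,4]^m$), the last right-hand side is bounded by $C_1\bmo^{1+\gamma_2}\sup\{\ell(L):L\in\sW,\ L\cap(B_{\sfrac{11}{4}}\setminus B_{\sfrac94})\ne\emptyset\}$; adding this to the graphical estimate yields \eqref{eqcomp2}.

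I expect no substantial obstacle: the statement is essentially an assembly of the a priori estimates of Theorem \ref{t:approx} and Corollary \ref{c:cover}. The delicate points are purely of bookkeeping nature — matching the geodesic annulus on $\cM$ with the Euclidean annulus indexing the supremum, and invoking the comparability of the sizes of neighbouring Whitney cubes so that the pointwise distance bound over $\cL$ involves only $\ell(L)$. It is, however, important to handle the $\bT_F$-contribution globally over $A\subseteq\cM$ rather than cube by cube, so that no enlargement of the annulus is introduced on the $\int|N|^2$ side of the estimate.
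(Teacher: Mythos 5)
Your proof is correct and takes essentially the same approach as the paper: split $T$ into the graphical part $\bT_F$ and the error $T-\bT_F$, control the former via the area formula (as in Lemma~1.9 of \cite{DSsns}) to get $\int|N|^2$, and sum the pointwise distance bound from Corollary~\ref{c:cover}\,(ii) against \eqref{e:err_regional} cube by cube for the error. One small improvement: you handle the $\bT_F$-contribution globally over the geodesic annulus, which keeps the $\int|N|^2$ integral exactly over $\cB_{\sfrac{21}{8}}\setminus\cB_{\sfrac{19}{8}}$, whereas the paper's cube-by-cube summation of $\int_{\Phii(L)}|N|^2$ would a priori give a slightly enlarged annulus; your organization avoids this minor imprecision.
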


\section{Proofs of the results of Part \ref{sect:normapprox}}\label{proofnorm}

\subsection{Proof of the results of Section \ref{sectwh}}
\begin{proof}[Proof of Proposition \ref{prop:tilting}]
	Item $\rm(ii)$ and item $\rm(iii)$ follow from  the bound on $\|\bPsi\|_{C^2}$ and $\|\Phii_{p_H}\|_{C^2}$.
	
	We prove item $\rm(i)$. Take $z=(x,y)\in\supp(T)\cap\bC_L\subseteq\RR^{m+n}$ and write similarly $p_L=(x_L,y_L)\in\RR^{m+n}$, we know $|y|<6\sqrt{m}$, $|x_L|\le 4\sqrt{m}$, and  $|y_L|\le C_0\bmo^{\sfrac{1}{2}}$. We have, by item $\rm(iii)$,  provided that $\eps_0$ is smaller than a geometric constant,
	\begin{align*}
		|x-x_L|&=|\p_{\pi_0}(z-p_L)|\le  |\p_{\pi_L}(z-p_L)|+|\p_{\pi_0}-\p_{\pi_L}||z-p_L|
		\\&\le 64r_L+C_0\bmo^{\sfrac{1}{2}}(|x-x_L|+|y|+|y_L|)
		\le 2^{-4}+\sfrac{1}{2}|x-x_L|+2^{-4},
	\end{align*}
	where we used $64r_L\le 2^{-4}$ by  \eqref{dcasc}. Hence $|x|\le |x_L|+|x-x_L|\le 4\sqrt{m}+2^{-2}\le 5\sqrt{m}$.
	
	For item $\rm(iv)$, we can assume that $\ell(H)\le \sfrac{1}{2}\ell(L)$. Take $z\in\supp(T)\cap\bC_H$, hence by item $\rm(i)$ (and Assumption \ref{ipotesi}), $|z|\le C_0$. Then,
	\begin{align*}
		|\p_{\pi_L}(z-p_L)|&\le |p_H-p_L|+ 	|\p_{\pi_L}-\p_{\pi_H}||z-p_H|+|\p_{\pi_H}(z-p_H)|
		\\&\le 2\sqrt{m} \ell(L)+C_0\bmo^{\sfrac{1}{2}}\ell(L)+64 r_H
		\le 64 r_L,
	\end{align*}
	provided that $\eps_0$ is smaller than a geometric constant.

	We prove the second claim of item $\rm(v)$.  
	First, we notice that, if $\eps_0$ is sufficiently small (depending on $M_0,N_0,C_h$),
	\begin{equation}\label{lcdanco}
		\supp(T)\cap\bC_{J}\subseteq\B_{128 r_J}(p_J)	\quad\text{for every }J\in\sS.
	\end{equation}
	Indeed, as $J\in\sS$, $\bs(T,\bC_{J},p_{J}+\pi_{J})\le C_h\bmo^{\sfrac{1}{2}}\ell(J)^{1+\beta_2}$, so that, if $z\in\supp(T)\cap \bC_J$,
	\begin{align*}
		|z-p_J|^2\le (64r_J)^2+ (C_h\bmo^{\sfrac{1}{2}}\ell(J)^{1+\beta_2})^2\le (128 r_J)^2,
	\end{align*}
	provided $\eps_0$ is small enough.
	
	Fix $H\in \sC^j$, for some $j$. Consider the chain of ancestors $L^{N_0}\supseteq L^{N_0+1}\supseteq \dots \supseteq L^j$, where $L^i\in \sC^i$ for every $i$, and with $L^j=H$. We prove the claim by induction on $j$. The base case is $j=N_0$ and we have to prove the claim up to $j=i-1$ (as the claim is trivial for $H=L$). Assume then $j=N_0<i$. 
	Take $z\in\supp(T)\cap\bC_{5\sqrt{m}}$, by Lemma \ref{height}, $|\p_{\pi_0^\perp}(z)|\le C_0\bmo^{\sfrac{1}{2}}$, so that, recalling also the bound on $\|\Psi\|_{C^0}$ and item $\rm(iv)$,
	we compute
	\begin{align*}
		|\p_{\pi_H^\perp}(z-p_{L^{N_0}})|&\le |\p_{\pi_H^\perp}-\p_{\pi_0^\perp}||z-p_{L^{N_0}}|+|\p_{\pi_0^\perp}(z)|+|\p_{\pi_0^\perp}(p_{L^{N_0}})|
		\\&\le C_0\bmo^{\sfrac{1}{2}}\le C(M_0,N_0)\bmo^{\sfrac{1}{2}}r_{L^{N_0}},
	\end{align*}
	so that, for $z\in\supp(T)\cap\bC_{36 r_{L^{N_0}}}(p_{L^{N_0}},\pi_H)$,
	\begin{align*}
		|z-p_{L^{N_0}}|^2\le (C(M_0,N_0)\bmo^{\sfrac{1}{2}}r_{L^{N_0}})^2+(36r_{L^{N_0}})^2\le (64 r_{L^{N_0}})^2
	\end{align*}
	if $\eps_0$ is small enough (depending on $M_0$ and $N_0$). In particular,
 $$
 \supp(T)\cap \bC_{36 r_{L^{N_0}}}(p_{L^{N_0}},\pi_H)\subseteq \bC_{L^{N_0}}.
 $$
 Now we prove the inductive step. Take then $L^{i+1}$, with $N_0<i+1<j$. 
	Notice that 
	\begin{equation}\label{cdsacasaa}
		\supp(T)\cap\bC_{36r_{L^{i+1}}}(p_{L^{i+1}},\pi_H)\subseteq 	\supp(T)\cap\bC_{36r_{L^i}}(p_{L^i},\pi_H)\subseteq\bC_{L^i},
	\end{equation}
	where the first inclusion is due to the inequality $|p_{L^{i+1}}-p_{L_i}|\le 2\sqrt{m}\ell(L^i)$ and the second inclusion is the inductive assumption.
	For $z\in\supp(T)\cap\bC_{36 r_{L^{i+1}}}(p_{L^{i+1}},\pi_H)$, we compute, using  item $\rm(ii)$ and  \eqref{cdsacasaa} together with \eqref{lcdanco} (recall $L^i\in\sS$),
	\begin{align*}
		|\p_{\pi_H^\perp}(z-p_{L^{i+1}})|&\le |\p_{\pi_H^\perp}(p_{L^{i+1}}-p_{L^i})|+|\p_{\pi_H^\perp}-\p_{\pi^\perp_{L^{i}}}||z-p_{L^{i}}|+|\p_{\pi_{L^i}^\perp}(z-p_{L^i})|
		\\&\le 2\sqrt{m}\ell(L^i)+C_0\bmo^{\sfrac{1}{2}}\ell(L^i)128r_{L^i}+C_h\bmo^{\sfrac{1}{2}}\ell(L^i)^{1+\beta_2}
		\le r_{L^{i+1}},
	\end{align*}
	provided that $\eps_0$ is small enough (depending on $M_0,N_0,C_h$). This implies that 
	$$
	\supp(T)\cap\bC_{36 r_{L^{i+1}}}(p_{L^{i+1}},\pi_H)\subseteq \B_{\sqrt{1+36^2}r_{L^{i+1}}}(p_{i+1})\subseteq\bC_{L^{i+1}},
	$$
	which is the claim. 
	
	We show now the first claim of $\rm(v)$. Take $H\subseteq L$ as in the statement and set $J\defeq L$ if $L\in\sS$, otherwise set $J$ equal to the father of $L$, notice that in either case $J\in\sS$ and $\ell(J)\le 2\ell(L)$. Take $z\in\supp(T)\cap \bC_{36 r_L}(p_L,\pi_H)$. Now, by the second claim of item $\rm (v)$ and item $\rm(iv)$, $z\in \bC_J$. Using items $\rm(i)$ and  $\rm(iii)$, \eqref{lcdanco} and finally the fact that $J\in\sS$,
	\begin{align*}
		|\p_{\pi_H^\perp}(z-p_H)|&\le 	|\p_{\pi_H^\perp}(p_J-p_H)|+	|\p_{\pi_H^\perp}-\p_{\pi_J^\perp}||z-p_J|+|\p_{\pi_J^\perp}(z-p_J)|
		\\&\le C_0\bmo^{\sfrac{1}{2}}\ell(J)^2+C_0\bmo^{\sfrac{1}{2}}	\ell(J)128r_J+C_h\bmo^{\sfrac{1}{2}}\ell(J)^{1+\beta_2}\le C_1\ell(L)^{1+\beta_2}.
	\end{align*}

	We show item $\rm (vi)$.  Set $x\defeq\p_{\pi_0}(z)$. Notice that by the definition of  $\pi_L$ there exists $x''\in\pi_0$ such that, for $ z''\defeq p_L+D\Phii(x_L)(x''-x_L)$, it holds $$|\p_{\pi_L^\perp}(z-p_L)|=|z-z''|.$$
	By item $\rm(iii)$, thanks to the fact that $\p_{\pi_L }(z-p_L)=z''-p_L=\p_{\pi_L }(z''-p_L)$, we have
	\begin{equation}\label{cdss}
		|x-x''|=|\p_{\pi_0}(z-z'')|\le|\p_{\pi_0}-\p_{\pi_L}||z-z''|+ |\p_{\pi_L}(z-z'')|\le C_0 \bmo^{\sfrac{1}{2}}|z-z''|.
	\end{equation}Therefore,
	\begin{align*}
		|z-z'|\le |z-z''|+|z''-z'|\le |z-z''|+|D\Phii (x_L)(x-x'')|\le |z-z''|+C_0 |x-x''|
	\end{align*}
	whence the first claim by \eqref{cdss} and the choice of $z''$. 
	The second conclusion of item $\rm(vi)$ follows from the first conclusion, taking into account the second order Taylor expansion of $\Phii$ around $x_L$.
\end{proof}

\begin{proof}[Proof of Proposition \ref{p:whitney}]
	We first prove \eqref{e:prima_parte}. Take $L\in \sC^j$ with $N_0\le j\le N_0+6$, that is $$2^{-N_0-6}\le \ell(L)\le 2^{-N_0}.$$
	Notice that by Lemma \ref{height}, 
	\begin{equation}\label{casdacc}
		\frac{\|T\|( \bC_{5\sqrt{m}})}{\omega_m (5\sqrt{m})^m}-Q=\bE(T, \bC_{5\sqrt{m}},\pi_0),
	\end{equation}
	so that, recalling also items $\rm (i)$ and $\rm (iii)$ of Proposition \ref{prop:tilting}, 
	\begin{align*}
		\bE(T,\bC_L,\pi_L)&\le C_0\frac{1}{r_L^m}\bE(T,\bC_{5\sqrt{m}},\pi_L)\le C(M_0,N_0)\big(\bE(T,\bC_{5\sqrt{m}},\pi_0)+|\vec \pi_L-\vec\pi_0|^2\|T\|(\bC_{5\sqrt{m}})\big)\\
		&\le C(M_0,N_0)\big(\bE(T,\bC_{5\sqrt{m}},\pi_0)+\bmo (\bE(T, \bC_{5\sqrt{m}},\pi_0)+Q)\big).
	\end{align*}
	Hence, by Assumption \ref{ipotesi},
	$$
	\bE(T,\bC_L,\pi_L)\le C(M_0,N_0) \big(\bE(T, \bB_{{6\sqrt{m}}},\pi_0)+\bmo\big)\le C(M_0,N_0)\bmo,
	$$
	so that $L\notin \sW_e^j$ if $C_e\ge C(M_0,N_0)$. Also, take $z\in \supp(T)\cap \bC_L$, then, by Lemma \ref{height} with items $\rm(i)$  and $\rm(iii)$ of Proposition \ref{prop:tilting}, 
	\begin{align*}
		|\p_{\pi_L^\perp}(z-p_L)|&\le|\p_{\pi_0^\perp}(z)|+|\p_{\pi_0^\perp}(p_L)|+|	\p_{\pi_L^\perp}-	\p_{\pi_0^\perp}|(|z|+|p_L|)\\
		&\le C_0\big(\bmo^{\sfrac{1}{2}}+|\vec\pi_L-\vec\pi_0|\big)\le C_0\bmo^{\sfrac{1}{2}},
	\end{align*}
	which implies that
	$$
	\bs(T,\bC_L,p_L+\pi_L)\le C_0\bmo^{\sfrac{1}{2}},
	$$
	further meaning that $L\notin \sW_h^j$ if $C_h\ge C(N_0)$.
	
	Equations \eqref{eq0}, \eqref{eq1}, and, in particular, equation \eqref{eq2} for $L\in \sS$,  follow directly from the definition of the stopping conditions. It remains to prove  \eqref{eq2} for $L\in\sW$.
	Take $L\in \sW$ and let $J$ be its father, notice that  $J\in \sS$. Take $z\in\supp(T)\cap \bC_L$, by item $\rm(iv)$ of Proposition \ref{prop:tilting}, $z\in \bC_J$. Also, by item $\rm (ii)$ of Proposition \ref{prop:tilting}, recalling \eqref{lcdanco},
	\begin{align*}
		|\p_{\pi_L^\perp}(z-p_L)|&\le |\p_{\pi_L^\perp}(p_L-p_J)|+|\p_{\pi_L^\perp}-\p_{\pi_J^\perp}||z-p_J|+|\p_{\pi_J^\perp}(z-p_J)|\\&
		\le C_0\bmo^{\sfrac{1}{2}}\ell(J)^2+C_0\bmo^{\sfrac{1}{2}}\ell(J)128r_J+C_h\bmo^{\sfrac{1}{2}}\ell(J)^{1+\beta_2}\\&\le C(M_0,C_h)\bmo^{\sfrac{1}{2}}\ell(L)^{1+\beta_2},
	\end{align*}
	which implies  the second part of \eqref{eq2}.
	
	Now we show \eqref{eq4}. Fix $ L\in \sW\cup \sS$, by what we have just proved (i.e.\ the second part of \eqref{eq2}), 
	$\bs(T,\bC_L,p_L+\pi_L)\le C_1\bmo^{\sfrac{1}{2}}\ell(L)^{1+\beta_2}$. Hence, for every $z\in \supp(T)\cap \bC_L$,
	$$
	|z-p_L|^2\le (64r_L)^2+(C_1\bmo^{\sfrac{1}{2}}\ell(L)^{1+\beta_2})^2\le (128r_L)^2,
	$$
	provided $\eps_0$ is sufficiently small (notice that  it is crucial that $C_1$ does not depend on $\eps_0$).
	
	We now show \eqref{eq3}. Take $L\in\sW\cup\sS$. Then, $\B_{128 r_L}(p_L)\subseteq\B_{1}(p_L)\subseteq\bC_{5\sqrt{m}}$, 
	as $128r_L\le 1$, by \eqref{dcasc}. By \eqref{eq4}, the monotonicity formula for the mass and \eqref{casdacc},
	\begin{align*}
		\|T\|(\bC_L)\le \|T\|(\B_{128 r_L}(p_L))
		\le (128r_L)^m \|T\|(B_{1}(p_L))
		\le (128r_L)^m \|T\|(\bC_{5\sqrt{m}})\le (128r_L)^m C_0,
	\end{align*}
	so that the claim follows.
	
	Finally, we prove the first part of \eqref{eq2}. Take again $L\in\sW$ and let $J$ be the father of $L$, notice that $J\in \sS$. Then, using items $\rm(iv)$ and $\rm(ii)$ of Proposition \ref{prop:tilting} as well as \eqref{eq3},
	\begin{align*}
		\bE(T,\bC_L,\pi_L)&\le 4 \bE(T,\bC_L,\pi_J)+C_0\ell(L)^{-m}{\|T\|(\bC_L)}|\vec \pi_L-\vec\pi_J|^2\le C_0 \bE(T,\bC_J,\pi_J)+C_1\bmo\ell(L)^2\\&\le C_0 C_e\bmo\ell(L)^{2-2\delta_2}+C_1\bmo\ell(L)^2\le C_1\bmo\ell(L)^{2-2\delta_2},
	\end{align*}
	which is the first part of \eqref{eq2}.
\end{proof}

\begin{proof}[Proof of Corollary \ref{c:cover}]
	Recalling Lemma \ref{height},  the first two assertions of item $\rm(i)$ follow, provided that $\eps_0$ is small enough. The third assertion is proved exactly as \cite[Corollary 2.2]{DS2}. 
	
	We now 
	show  item $\rm(ii)$. By Lemma \ref{height}, for every $z\in\supp(T)\cap \bC_{5\sqrt{m}}$, there exists $z'\in\cM$ with $|z-z'|\le C_0\bmo^{\sfrac{1}{2}}$. Take then $x\in L\in\sS\cup\sW$ and set $p\defeq\Phii(x)$, fix $p'\in\supp(T)\cap\p^{-1}(p)$. By what just remarked, if $\eps_0$ is sufficiently small (depending upon a geometric quantity and on $N_0$), $|p-p'|\le C_0\bmo^{\sfrac{1}{2}}\le \sfrac{r_M}{2}$, for any $M\in\sS^{N_0}$.
	Now we show that indeed
	\begin{equation}\label{csaacasas}
		|p-p'|\le r_L.
	\end{equation}
	If \eqref{csaacasas} were false, we could take the ancestor $H$ of $L$ with largest sidelength among the ones for which $|p-p'|\ge r_H$. Therefore, if $J$ is the father of $H$, $J\in\sS$. We notice that $|p-p'|<r_J$, in particular, $|p'-p_J|\le |p'-p|+|p-p_J|\le r_J+2\sqrt{m}\ell(J)\le 2r_J$, so that $p'\in\bC_J$ and $|\p_{\pi_0}(p')-x_J|\le 2 r_J$.
	By item $\rm(v)$ of proposition \ref{prop:tilting},
	there exists $p''\in\cM$ with
	\begin{align*}
		|p-p'|\le |p'-p''|&\le C_0|\p_{\pi_J^\perp}(p'-p_J)|+\bmo^{\sfrac{1}{2}}|\p_{\pi_0}(p')-x_J|^2
		\\&\le C_0 C_h\bmo^{\sfrac{1}{2}} \ell(J)^{1+\beta_2}+4M_0^2\bmo^{\sfrac{1}{2}}\ell(J)^2,
	\end{align*}
	so that, taking $\eps_0$ small enough (depending upon all other parameters), we reach $|p-p'|\le \sfrac{r_H}{2}$, which is a contradiction. Now we can run again the very same computation, with $L$ in place of $J$, relying on \eqref{csaacasas} and \eqref{eq2}, to conclude that
	\begin{align*}
		|p-p'|\le  C_1\bmo^{\sfrac{1}{2}} \ell(L)^{1+\beta_2}+4M_0^2\bmo^{\sfrac{1}{2}}\ell(L)^2\le C_1 \bmo^{\sfrac{1}{2}}\ell(L)^{1+\beta_2}.
	\end{align*}

	We show now item $\rm(iii)$. Fix  $x\in\bGam$ and set $p\defeq \Phii(x)$. As $x\in\bGam$, we can find a infinite sequence  $(L_j)_{j\ge N_0}\in\sS$, with $L_j\in\sS^j$ for every $j\ge N_0$ and $x=\bigcap_{j\ge N_0}L_j$. The conclusion then follows by item $\rm (ii)$.
\end{proof}
\subsection{Proof of the results of Section \ref{sectnorm}}
\begin{lem}[Projections on tilted cylinders]\label{giraerigira}
	Assume the hypotheses of Proposition \ref{p:whitney} and  that  $\eps_0$ is sufficiently small (depending upon all other parameters). Let $H,L\in\sW\cap \sS$ be such that $H\subseteq L$.
	Then, for every $p\in \Phii(L)$,
	$$(\p_{p+\pi_H})_\sharp (T\mres \bC_{32r_L}(p,\pi_H))=Q\a{B_{32 r_L}(p,\pi_H)}.$$
\end{lem}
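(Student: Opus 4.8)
The plan is to show that the pushforward $(\p_{p+\pi_H})_\sharp (T\mres \bC_{32r_L}(p,\pi_H))$ is an integer multiple of $\a{B_{32r_L}(p,\pi_H)}$ by the constancy theorem, and then to identify the multiplicity as $Q$. First I would verify that the cylinder $\bC_{32r_L}(p,\pi_H)$ is ``well-positioned'' relative to $T$: namely, that $\supp(T)\cap \bC_{32r_L}(p,\pi_H)$ is compactly contained in $\bC_L$ (hence in $\bC_{5\sqrt{m}}$ by item $\rm(i)$ of Proposition~\ref{prop:tilting}), and in particular is disjoint from the lateral boundary of the cylinder. This is where I would lean on item $\rm(v)$ of Proposition~\ref{prop:tilting}: since $p\in\Phii(L)$ we have $|p-p_L|\le C_0\ell(L)$, so $\bC_{32r_L}(p,\pi_H)\subseteq \bC_{36r_L}(p_L,\pi_H)$, and item $\rm(v)$ gives both $\supp(T)\cap \bC_{36r_L}(p_L,\pi_H)\subseteq \bC_L$ and the separation bound $\bs(T,\bC_{36r_L}(p_L,\pi_H),p_H+\pi_H)\le C_1\bmo^{\sfrac12}\ell(L)^{1+\beta_2}$. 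Combined with (w3)/\eqref{dcasc}-type smallness of $r_L$ and smallness of $\eps_0$, this confines $\supp(T)$ inside the open part of the cylinder, away from the lateral boundary $\{|\p_{\pi_H}(\,\cdot-p)|=32r_L\}$.

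Next I would check the boundary and tilting hypotheses needed to apply the constancy theorem to $S\defeq(\p_{p+\pi_H})_\sharp (T\mres \bC_{32r_L}(p,\pi_H))$. Since $\partial T\res\bC_{\sfrac{11\sqrt m}{2}}=0$ by Lemma~\ref{height} and $\supp(T)\cap\bC_{32r_L}(p,\pi_H)$ stays away from the lateral boundary, the slicing/restriction argument shows $\partial S\res B_{32r_L}(p,\pi_H)=0$; hence $S$ is an integer-multiplicity current in the disk with no interior boundary. Moreover $S$ is locally the pushforward of an area-minimizing current under an orthogonal projection onto a plane that is within $C_0\bmo^{\sfrac12}$ of $\pi_0$ (item $\rm(iii)$ of Proposition~\ref{prop:tilting}) and hence within a small tilt of $\vec T$ on most of the mass (the excess bound \eqref{eq2}); so $S$ is a nonnegative integer multiple of $\a{B_{32r_L}(p,\pi_H)}$ by the constancy theorem, say $S=k\a{B_{32r_L}(p,\pi_H)}$.

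To pin down $k=Q$, I would compare with the unperturbed projection. By item $\rm(i)$ of Corollary~\ref{c:cover} we have $\p_\sharp(T\mres\bU)=Q\a{\cM\cap\bC_{\sfrac{5\sqrt m}{2}}}$, and by Lemma~\ref{height} the projection $(\p_{\pi_0})_\sharp T\res\bC_{\sfrac{11\sqrt m}{2}}=Q\a{B_{\sfrac{11\sqrt m}{2}}}$; in either description the density of the pushforward is $Q$. Since $\pi_H$ is a small tilt of $\pi_0$ and $\bC_{32r_L}(p,\pi_H)$ is a cylinder over a disk containing (a neighbourhood of) a point where the fibre structure of $T$ over $\pi_0$ is already controlled, a degree/homotopy argument — sliding the projection plane from $\pi_0$ to $\pi_H$ through small tilts, using that $\supp(T)$ never meets the relevant lateral boundaries along the way — shows the multiplicity is constant equal to $Q$. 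Concretely, one evaluates the multiplicity at a single generic point $y\in B_{32r_L}(p,\pi_H)$: the fibre $\p_{p+\pi_H}^{-1}(y)\cap\supp(T)$ consists of points counted with the orientations induced by $\vec T$, and since $\vec T$ is $C_0\bmo^{\sfrac12}$-close to $\pi_0$ (hence to $\pi_H$) wherever it is defined, every such point is counted positively, and the total equals the corresponding count for the $\pi_0$-projection, namely $Q$.

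\textbf{Main obstacle.} The delicate point is the confinement step: one must ensure that $\supp(T)\cap\bC_{32r_L}(p,\pi_H)$ does not escape through the \emph{lateral} boundary of the tilted cylinder, which is exactly what allows slicing to kill the boundary term and what makes the constancy theorem applicable. This is not automatic because the cylinder is tilted by $\pi_H$ rather than by $\pi_0$ or $\pi_L$, and $p$ ranges over all of $\Phii(L)$, not just $p_L$; the resolution is to absorb everything into $\bC_{36r_L}(p_L,\pi_H)$ and invoke item $\rm(v)$ of Proposition~\ref{prop:tilting} — essentially this lemma is a corollary of that item together with the smallness built into Assumption~\ref{parametri}. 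The identification $k=Q$ is conceptually routine given Corollary~\ref{c:cover} and Lemma~\ref{height}, but it does require care that the tilt is small enough that no sheet is ``folded back'' and counted with the wrong sign.
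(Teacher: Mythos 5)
Your confinement step is exactly right and matches the paper's: $|p-p_L|\le 2\sqrt{m}\ell(L)$ gives $\bC_{32r_L}(p,\pi_H)\subseteq\bC_{36r_L}(p_L,\pi_H)$, and items (i) and (v) of Proposition~\ref{prop:tilting} then confine $\supp(T)$ inside $\bC_L$, so the constancy theorem yields $(\p_{p+\pi_H})_\sharp(T\mres\bC_{32r_L}(p,\pi_H))=k\a{B_{32r_L}(p,\pi_H)}$ for some integer $k$.

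The gap is in your identification $k=Q$. You propose a degree/homotopy argument ``sliding the projection plane from $\pi_0$ to $\pi_H$'' directly inside the small cylinder $\bC_{32r_L}(p,\pi_\cdot)$. For this to run, you would need $\supp(T)$ to avoid the lateral boundary of $\bC_{32r_L}(p,\pi_t)$ for every intermediate plane $\pi_t$ between $\pi_0$ and $\pi_H$, including at $t=0$. But that confinement is precisely what item~(v) does \emph{not} give you for $\pi_0$ when $L$ is a small cube: the tilt $|\vec\pi_H-\vec\pi_0|\le C_0\bmo^{\sfrac12}$ does not shrink as $\ell(L)\to 0$, while the cylinder radius $32r_L\sim M_0\ell(L)$ does. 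In the direct estimate, a point $z\in\supp(T)\cap\bC_{32r_L}(p,\pi_0)$ only satisfies $|\p_{\pi_L}(z-p_L)|\le 32r_L+C_0\bmo^{\sfrac12}+2\sqrt m\,\ell(L)$, and the middle term can dominate $64 r_L$ for $\ell(L)$ small, so $z$ need not lie in $\bC_L$ at all. The separation bounds that make item~(v) work only hold along the ancestor chain of $L$ (i.e., for $\pi_H$ with $H\subseteq L$ in $\sS\cup\sW$), not for the fixed reference plane $\pi_0$. So the claim ``$\supp(T)$ never meets the relevant lateral boundaries along the way'' is unsubstantiated, and in fact false at small scales.

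The paper avoids this by never comparing with $\pi_0$ at the small scale. It uses that for any ancestor $M\in\sS\cup\sW$ of $L$ one has $\bC_{32r_L}(p,\pi_H)\subseteq\bC_{36r_M}(p_M,\pi_H)$ — same projection plane $\pi_H$, strictly larger cylinder — and since the $\pi_H^\perp$-fibre through any $y\in B_{32r_L}(p,\pi_H)$ lies entirely inside both cylinders, the slices (hence the integer multiplicities from the constancy theorem) agree at every stage of the chain. Climbing to the top ancestor $M\in\sS^{N_0}$, the cylinder has radius $\sim M_0 2^{-N_0}$, a constant independent of $L$; only \emph{there} is the tilt $C_0\bmo^{\sfrac12}$ genuinely small relative to the cylinder, and the comparison with the $\pi_0$-projection (Lemma~\ref{height}) is robust. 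This is the content of the argument in \cite[Proposition~4.2]{DS2}, which the paper invokes. Your proposal is missing the ancestor-chain step, and the small-scale sliding you use in its place would fail.
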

\begin{proof}
	As $|p-p_L|\le 2\sqrt{m}\ell(L)$, by items $\rm(i)$ and $\rm(v)$ of Proposition \ref{prop:tilting}, $$\supp(T)\cap \bC_{32r_L}(p,\pi_H)\subseteq\supp(T)\cap \bC_{36r_L}(p_L,\pi_H)\subseteq \supp(T)\cap\bC_L\subseteq \bC_{5\sqrt{m}},$$ so that
	$$(\p_{p+\pi_H})_\sharp(T\mres \bC_{32r_L}(p,\pi_H))=k\a{B_{32r_L}(p,\pi_H)}\quad\text{for some integer $k$}.$$
	We conclude if we show that $k=Q$. Now, if $M\in\sS\cup\sW$ is an ancestor of $L$, $|p_M-p|\le 4\sqrt{m}\ell(M)$, so that 
	$\bC_{32r_L}(p,\pi_H)\subseteq \bC_{36r_M}(p_M,\pi_H)$.
	The argument now is exactly the same as the one for the proof of \cite[Proposition 4.2]{DS2}.
\end{proof}

\begin{proof}[Proof of Theorem \ref{t:approx}]
	Take $L\in\sW$, notice that trivially $\supp(T)\cap \bC_{32 r_L}(p_L,\pi_L)\subseteq\bC_L$
	and that, by  Lemma \ref{giraerigira},	provided that $\eps_0$ is sufficiently small, 
	$$(\p_{p_L+\pi_L})_\sharp (T\mres \bC_{32r_L}(p_L,\pi_L))=Q\a{B_{32 r_L}(p_L,\pi_L)}$$
	and also, by \eqref{eq2}, \begin{equation}\label{sacsad}
		E_L\defeq\bE(T,\bC_{32r_L}(p_L,\pi_L),\pi_L)\le 2^{m}\bE(T,\bC_L,\pi_L)\le C_1\bmo\ell(L)^{2-2\delta_2}.
	\end{equation}
	Now we follow the proof of \cite[Section 6.2]{DS2}, we give the details about the parts which are different.
	If $\eps_0$ is sufficiently small we can apply \cite[Theorem 2.4]{DS1} and obtain a $C_0E_L^{\gamma_1}$-Lipschitz map $$f_L:B_{8 r_L}(p_L,\pi_L)\to\Iq (\pi_L^\perp),$$
	and we associate to it the set $K_L\subseteq B_{8 r_L}(p_L,\pi_L)$ which corresponds to the set $K$ of  \cite[Theorem 2.4]{DS1}, in particular, $T$ coincides with the graph of $f_L$ on $K_L\times\pi_L^\perp$.
	We call $$\mathscr{D}(L)\defeq(\supp(T)\cup\gr(f_L))\cap \big[(B_{8r_L}(p_L,\pi_L)\setminus K_L)\times\pi_L^\perp\big].$$
	By the estimates of \cite[Theorem 2.4]{DS1} and the choice of $\beta_2,\gamma_2$ in \eqref{freavfad},
	\begin{equation}\label{cdssdc}
		\mathcal{H}^m(\mathscr{D}(L))+\|T\| (\mathscr{D}(L))\le C_0 E_L^{1+\gamma_1}\ell(L)^m\le C_1\bmo^{1+\gamma_2}\ell(L)^{m+2+\gamma_2}.	
	\end{equation}
	Also, by \eqref{eq2}, $\bs(T,\bC_L,p_L+\pi_L)\le C_1\bmo^{\sfrac{1}{2}}\ell(L)^{1+\beta_2}$. By the construction of  \cite[Theorem 2.4]{DS1}, we obtain
	$$
	\|f_L\|_{C^0(B_{8 r_L}(p_L,\pi_L))}\le C_1\bmo^{\sfrac{1}{2}}\ell(L)^{1+\beta_2},
	$$
	in particular, $\gr(f_L)\subseteq \bU$, if $\eps_0$ is smaller than a geometric quantity.
	
	Now we take $\cL$, the Whitney region associated to $L$, and set $\cL'\defeq\Phii(J)$, where $J$ is the cube concentric to $L$ with $\ell(J)=\sfrac{9}{8}\ell(L)$.

	By Assumption \ref{ipotesi}, $\cM$, in $\B_{6\sqrt{m}}$, is given by the graph of a map $$\bPsi_{p_L}:B_{7\sqrt{m}}(p_L,\pi_L)\to \pi_L^\perp,$$ satisfying $\|\bPsi_{p_L}\|_{C^2}\le \bmo^{\sfrac{1}{2}}$, recall that $\Phii_{p_L}(x)= (x,\bPsi_{p_L}(x))$ for $x\in B_{3\sqrt{m}\ell(L)}(p_L,\pi_L)$. In particular, if $\eps_0$ is small enough, we can apply \cite[Theorem 5.1]{DSsns}  with $f=f_L$, $r=8 r_L$ and $s=3\sqrt{m}\ell(L)$ to obtain maps $F_L:\Phii_{p_L}(B_s)\to \Iq (\bU)$ and $N_L:\Phii_{p_L}(B_s)\to \Iq (\RR^{m+n})$, where we set for brevity $B_s\defeq B_s(p_L,\pi_L)$ and similarly $B_r\defeq B_r(p_L,\pi_L )$.
	Then,  taking $\eps_0\le C_0$, we have $\cL'\subseteq \Phii_{p_L}(B_s)$, as for every $z'\in\cL'$, by \eqref{cdscas},
	\begin{equation}\label{cdsacasdc}
		\begin{split}
			|\p_{\pi_L}(z'-p_L)|\le|z'-p_L|\le \sqrt{1+C_0\bmo}\sqrt{m}\sfrac{9}{8}\ell(L)\le2\sqrt{m}\ell(L)\le s 
		\end{split}
	\end{equation}
	so that we can restrict the maps to obtain $F_L:\cL'\to \Iq (\bU)$ and $N_L:\cL'\to \Iq (\RR^{m+n})$. From now on, $F_L$ and $N_L$ denote these restrictions.
	
	By \cite[Theorem 5.1]{DSsns},  $\bG_{f_L}\mres (\p^{-1}(\cL'))=\bT_{F_L}\mres(\p^{-1}(\cL'))$ and, for every $p\in \cL'$,
	$N_L(p)=\sum_i\a{F_i(p)-p}$ and $N_L(p)\perp T_p\cM$. 
	By \cite[(5.3)]{DSsns},  for every $x\in B_{s}$,
	\begin{align*}
		|N_L(\Phii_{p_L}(x))|&\le C_0\cG(f_L(x),Q\a{\bPsi_{p_L}(x)})\le C_0\big(\cG(f_L(x),Q\a{0})+\cG(Q\a{0},Q\a{\bPsi_{p_L}(x)})\big)\\&\le
		C_1\bmo^{\sfrac{1}{2}}\ell(L)^{1+\beta_2}+C_0\bmo^{\sfrac{1}{2}}\ell(L)^2\le C_1\bmo^{\sfrac{1}{2}}\ell(L)^{1+\beta_2},
	\end{align*}
	so that,  by \cite[(5.2)]{DSsns},  we also have 
	\begin{align*}
		\Lip(N_L)&\le C_1\big(\|D^2\bPsi_{p_L}\|_{C^0(B_s)}\|N_L\|_{C^0(\Phii(B_s))}+\|D\bPsi_{p_L}\|_{C^0(B_s)}+\Lip(f_L)\big)\\
		&\le C_1\big(\bmo^{\sfrac{1}{2}} C_1\bmo^{\sfrac{1}{2}}\ell(L)^{1+\beta_2}+C_0\bmo^{\sfrac{1}{2}}\ell(L)+(C_1\bmo\ell(L)^{2-2\delta_2})^{\gamma_1}\big)\\
		&\le C_1\bmo^{\gamma_2}\ell(L)^{\gamma_2},
	\end{align*}
	where we also used \eqref{sacsad} and the fact that $D\bPsi_{p_L}(0)=0$.
	To sum up, we have proved the bounds in \eqref{e:Lip_regional} with $N_L$ in place of $N$.

	Now we can follow exactly the construction in \cite[Section 6.2]{DS2} and this shows the existence of the $\cM$-normal approximation  $(\cK, F)$ satisfying \eqref{e:Lip_regional} and \eqref{e:err_regional} (notice that the step: ``third extension and conclusion'' of \cite{DS2} is not needed in our case). We recall the definition of the set $\cK$, as it will be used in the sequel. For every $L\in\sW$, consider the set $\sW(L)\defeq\{M\in\sW:M\cap L\ne \emptyset\}$, then 
	\begin{equation*}
		\cK\defeq \cM\setminus \bigg(\bigcup_{L\in\sW}\bigg(\cL'\cap\bigcup_{M\in\sW(L)}\p(\mathscr{D}(M))\bigg)\bigg).
	\end{equation*}
	Recall that  $\p$ is $C_0$-Lipschitz and that for every $L\in\sW$ and $M\in\sW(L)$, $\sfrac{1}{2}\ell(L)\le \ell(M)\le 2\ell(L)$ (the cardinality of $\sW(L)$ is bounded by a geometric constant). Hence, by \eqref{cdssdc},
	\begin{equation}\label{seipuntocinque}
		\mathcal{H}^m(\cL\setminus \cK)\le 	\mathcal{H}^m(\cL'\setminus \cK)\le \sum_{M\in \sW(L)}\sum_{H\in\sW(M)}\p(\mathscr{D}(H))\le C_1\bmo^{1+\gamma_2}\ell(L)^{m+2+\gamma_2}.
	\end{equation}
	 We do not recall here the construction of the normal approximation.

	Now we show	\eqref{e:Dir_regional}, we have to take care only of the first conclusion, as the second one follows from \eqref{e:Lip_regional}. Let $\vec{\cM}(q)$ denote the unit $m$-vector orienting $T_q\cM$, hence, for every $q\in \cM\cap \bC_{8r_L}(p_L,\pi_L)$, $|\vec \cM(q)-\vec\pi_L|\le C_0\bmo^{\sfrac{1}{2}}r_L$.
	Now take $z\in \supp(T)\cap\p^{-1}(\cL)$, say $\p(z)=z'\in\cL$, then, recalling \eqref{cdsacasdc},
	\begin{align*}
		|\p_{\pi_L}(z-p_L)|&\le |\p_{\pi_L}-\p_{T_{z'}\cM}||z-z'|+|\p_{T_{z'}\cM}(z-z')|+|\p_{\pi_L}(z'-p_L)|\\
		&\le C_0\bmo^{\sfrac{1}{2}}r_L+2\sqrt{m}\ell(L)\le 32 r_L,
	\end{align*} if $\eps_0$ is small enough, so that $\supp(T)\cap\p^{-1}(\cL)\subseteq\supp(T)\cap \bC_{32r_L}(p_L,\pi_L)\subseteq\bC_{L}$.
	Therefore, by the considerations above together with \eqref{e:err_regional} and \eqref{eq3},
	\begin{align*}
		&	\int_{\p^{-1}(\cL)}|\vec\bT_F(x)-\vec{\cM}(\p(x))|^2\dd\|\bT_F\|(x)\\&\quad\le\	\int_{\p^{-1}(\cL)}|\vec T(x)-\vec{\cM}(\p(x))|^2\dd\|T\|(x)+ C_1\bmo^{1+\gamma_2}\ell(L)^{m+2+\gamma_2}\\&\quad\le \int_{\bC_L}|\vec T(x)-\vec\pi_L|^2\dd\| T\|(x)+ C_1\bmo^{1+\gamma_2}\ell(L)^{m+2+\gamma_2}+C_1\bmo \ell(L)^{m+2}\\&\quad\le C_1 \big(r_L^m\bmo \ell(L)^{2-2\delta_2}+\bmo^{1+\gamma_2}\ell(L)^{m+2+\gamma_2}+\bmo \ell(L)^{m+2}\big)\le C_1\bmo \ell(L)^{m+2-2\delta_2}
	\end{align*}
	where we used \eqref{eq2} in the next to last inequality.  Now the conclusion follows from \cite[Proposition 3.4]{DSsns}, exactly as in \cite[Section 6.3]{DS2}.
%
\end{proof}

\subsection{Proof of the results of Section \ref{sectsep}}
\begin{proof}[Proof of Proposition \ref{p:separ}]
	Fix $L\in\sW_h$ and let $J\in\sS$ denote the ancestor of $L$ with $\ell(J)=2^6\ell(L)$ (recall \eqref{e:prima_parte}), so that, by \eqref{eq1}, $$E\defeq\bE(T,\bC_J,\pi_J)\le C_e\bmo\ell(J)^{2-2\delta_2}.$$
	By Lemma \ref{giraerigira} and Lemma \ref{heighapp} (provided $\eps_0$ is sufficiently small), there exist $y_1,\dots,y_Q\in \pi_J^\perp$ such that $$\supp(T)\cap \bC_{4 r_J}(p_J,\pi_J)\subseteq\bigcup_{i=1}^Q( \pi_J\times B_{ C_0 r_J E^{\sfrac{1}{2}}}(y_i,\pi_J^\perp)).$$
	We can assume (up to  enlarging $C_0$) that $\supp(T)\cap \bC_{4r_J}(p_J,\pi_J)\subseteq\bigcup_{i=1}^q\bS_i$, where $1\le q\le Q$ and $\bS_i=\pi_J\times A_i$, for some $A_i\subseteq\pi_J^\perp$  with $$\diam(A_i)\le C_0 r_JE^{\sfrac{1}{2}}\le C_0r_L(C_e\bmo\ell(L)^{2-2\delta_2})^{\sfrac{1}{2}}$$  and such that $(A_i)_{i=1,\dots,q}$ are open and pairwise disjoint. By the smallness of $E$, we can moreover assume that $\p_{\pi_J}(T\mres (\bS_i\cap\bC_{4r_J}(p_J,\pi_J)))=Q_i\a{B(p_J,\pi_J)}$, for integers $ Q_i\ge 1$, with $\sum_{i=1}^qQ_i=Q$.
	
	Now, as $L\in\sW_h$, there exists $\hat z\in\bC_L$ with $|\p_{\pi_L^\perp}(\hat z-p_L)|>C_h\bmo^{\sfrac{1}{2}}\ell(L)^{1+\beta_2}$. Notice that $\hat z\in\B_{4r_J}(p_J)\subseteq \bC_{4r_J}(p_J,\pi_J)$. Indeed, by \eqref{eq4} and \eqref{cdscas},
	\begin{align*}
		|\hat z-p_J|\le |\hat z-p_L|+|p_L-p_J|\le 128 r_L+\sqrt{1+C_0\bmo} \sqrt{m}\ell(J)\le 4 r_J.
	\end{align*}
	Hence, up to reordering $\bS_1,\dots, \bS_1$, say $\hat z\in \bS_1$. 
	We compute, by item $\rm (ii)$ of Proposition \ref{prop:tilting} and \eqref{eq4},  
	\begin{align*}
		|\p_{\pi_J^\perp}(\hat z-p_J)|&\ge |\p_{\pi_L^\perp}(\hat z-p_L)|-  |\p_{\pi_J^\perp}-\p_{\pi_L^\perp}||\hat z-p_L|-|\p_{\pi_J^\perp}(p_J-p_L)|\\&\ge C_h\bmo^{\sfrac{1}{2}}\ell(L)^{1+\beta_2}-C_0\bmo^{\sfrac{1}{2}} \ell(J)128 r_L-C_0 \bmo^{\sfrac{1}{2}}\ell(J)^2\\&\ge
		\bmo^{\sfrac{1}{2}}\big(C_h\ell(L)^{1+\beta_2}-C_0 M_0\ell(L)^2- C_0\ell(L)^2\big)
		\\&\ge \bmo^{\sfrac{1}{2}}\ell(L)^{1+\beta_2}\big(C_h -C_0M_0-C_0\big),
	\end{align*}
	so that, for every $z\in\bS_1$,
	\begin{align*}
		|\p_{\pi_J^\perp}(z-p_J)|&\ge 		|\p_{\pi_J^\perp}(\hat z-p_J)|-|\p_{\pi_J^\perp}(z-\hat z)|\\&\ge \bmo^{\sfrac{1}{2}}\ell(L)^{1+\beta_2}\big(C_h -C_0M_0-C_0\big)- C_0 r_L\big(C_e\bmo\ell(L)^{2-2\delta_2}\big)^{\sfrac{1}{2}}
		\\&\ge\bmo^{\sfrac{1}{2}}\ell(L)^{1+\beta_2}\big(C_h -C_0M_0-C_0\big)-C_0M_0\ell(L)\bmo^{\sfrac{1}{2}}  C_e^{\sfrac{1}{2}}\ell(L)^{1-\delta_2}
		\\&\ge \bmo^{\sfrac{1}{2}}\ell(L)^{1+\beta_2}\big(C_h-C_0M_0-C_0-C_0M_0C_e^{\sfrac{1}{2}} \big).
	\end{align*}
	Therefore, if $C_h\ge C(M_0,C_e)$, 
	\begin{equation}\label{aaaa}
		|\p_{\pi_J^\perp}(z-p_J)|\ge 	\sfrac{15}{16}C_h\bmo^{\sfrac{1}{2}}\ell(L)^{1+\beta_2}\quad\text{for every }z\in \bS_1.
	\end{equation}

	Now take  $H=L$ or $H\in\sS\cup \sW$ with $\ell(H)=\sfrac{1}{2}\ell(L)$ and $L\cap H\ne \emptyset$. Let $\pi$ be an oriented $m$-plane with $|\vec\pi-\vec\pi_J|\le C_0\bmo^{\sfrac{1}{2}}\ell(J)$ (both $\pi_H$ and $\pi_J$ are suitable). Then,
	\begin{equation}\notag
		\supp(T)\cap \bC_{32r_H}(p_H,\pi)\subseteq \supp(T)\cap\bC_{4r_J}(p_J,\pi_J)\subseteq\bC_{5\sqrt m}.
	\end{equation}
	Indeed, the second inclusion is due to item $\rm (i)$ of Proposition \ref{prop:tilting}. For the first inclusion, take $z\in 	\supp(T)\cap \bC_{32r_H}(p_H,\pi)$,
	\begin{align*}
		|\p_{\pi_J}(z-p_J)|&\le 	|\p_{\pi_J}-\p_{\pi}||z-p_J|+|\p_{\pi}(z-p_H)|+|\p_{\pi}(p_H-p_J)|\\&
		\le C_0\bmo^{\sfrac{1}{2}}\ell(J)+32r_H+4\sqrt{m}\ell(J)\le 4r_J,
	\end{align*}
	provided $\eps_0$ is small enough (recall $M_0\ge 5$).
	Hence, arguing as in the proof of \cite[Proposition 4.2]{DS2}, we see that 
	\begin{equation}\label{cdscdsaa}
		(\p_{p_H+\pi_H})_\sharp\big(T\mres (\bS_1\cap \bC_{32r_H}(p_H,\pi_H))\big)=Q_1\a{B_{32 r_H}(p_H,\pi_H)}.
	\end{equation}
	
	Take any $\bar z\in \supp(T)\cap \bS_1\cap \bC_{32r_H}(p_H,\pi_H)$.
	We compute, using \eqref{aaaa} and item $\rm (ii)$ of Proposition \ref{prop:tilting} (notice that $|\bar z-p_H|\le |\p_{\pi_H^\perp}(\bar z-p_H)|+32r_H\le |\p_{\pi_H^\perp}(\bar z-p_H)|+32r_L$)
	\begin{align*}
		|\p_{\pi_H^\perp}(\bar z-p_H)|&\ge|\p_{\pi_J^\perp}(\bar z-p_J)|-|\p_{\pi_J^\perp}(p_J-p_H)|-|\p_{\pi_H^\perp}-\p_{\pi_J^\perp}||\bar z-p_H|\\&
		\ge \sfrac{15}{16}C_h\bmo^{\sfrac{1}{2}}\ell(L)^{1+\beta_2}- C_0\bmo^{\sfrac{1}{2}}\ell(J)^2\\&\quad-C_0\bmo^{\sfrac{1}{2}}\ell(J)|\p_{\pi_H^\perp}(\bar z-p_H)|-C_0\bmo^{\sfrac{1}{2}}\ell(J)32r_L,
	\end{align*}
	which means 
	\begin{align*}
		|\p_{\pi_H^\perp}(\bar z-p_H)|\big(1+C_0\bmo^{\sfrac{1}{2}}\big)\ge \bmo^{\sfrac{1}{2}}\ell(L)^{1+\beta_2}(\sfrac{15}{16}C_h-C_0-C_0M_0).
	\end{align*}
	Therefore, if $\eps_0$ is smaller than a geometric constant and $C_h\ge C(M_0)$,  we have
	\begin{equation}\label{caaasdcas1}
		|\p_{\pi_H^\perp}(\bar z-p_H)|\ge \sfrac{5}{8}C_h \bmo^{\sfrac{1}{2}}\ell(L)^{1+\beta_2}\quad\text{for every $\bar z\in \supp(T)\cap \bS_1\cap \bC_{32r_H}(p_H,\pi_H)$}.
	\end{equation}
	\medskip\\\textbf{Step 1: we show $\rm(S2)$.} We argue  by contradiction. Take $H\in\sW_n$ with $\ell(H)\le \sfrac{1}{2}\ell(L)$ but  $L\cap H\ne \emptyset$  (from the stopping criteria,  $\ell(H)=\sfrac{1}{2}\ell(L)$). Then \eqref{cdscdsaa} and \eqref{caaasdcas1} imply that
	\begin{equation}\notag
		\bs(T,\bC_H,p_H+\pi_H)\ge \sfrac{5}{4} C_h\bmo^{\sfrac{1}{2}}\ell(H)^{1+\beta_2}>C_h\bmo^{\sfrac{1}{2}}\ell(H)^{1+\beta_2},
	\end{equation}
	which is a contradiction	since the $\rm(HT)$ condition has priority over the $\rm(NN)$ condition and $H\in \sW_n$. 
	\medskip\\\textbf{Step 2: we show $\rm(S3)$.} 
	  Recall that \eqref{cdscdsaa} and \eqref{caaasdcas1} hold with $L$ in place of $H$.
	This means that for every $x\in B_{32 r_L}(p_L,\pi_L) $, there exists $z_x\in\supp(T)\cap \bC_{32r_L}(p_L,\pi_L)$ with $\p_{p_L+\pi_L}=x$ and $|\p_{\pi_L^\perp}(z_x-p_L)|=|z_x-x|\ge \sfrac{5}{8}C_h\bmo^{\sfrac{1}{2}}\ell(L)^{1+\beta_2}$.
	
	Now we exploit the construction of the proof of Theorem \ref{t:approx}, we keep the same notation, e.g.\ we use $f_L:B_{8r_L}(p_L,\pi_L)\to\Iq(\pi_L^\perp)$, $K_L\subseteq B_{8r_L}(p_L,\pi_L)$ and $N_L:\Phi(B_s)\to\Iq(\RR^{m+n})$, where $B_s=B_{3\sqrt{m}\ell(L)}(p_L,\pi_L)$. 
Notice that, if $\eps_0$ is smaller than a geometric constant,  then $\Omega\subseteq\Phii_{p_L}(B_s)$ (indeed, if $p\in\Omega$, then $|p-p_L|\le \sqrt{1+C_0\bmo}(\sqrt{m}+1)\ell(L)\le 3\sqrt{m}\ell(L)= s$). We are going to use this fact throughout.

 Recall that,  by \eqref{cdssdc},
 \begin{equation}\label{vrfeacda}
 \mathcal{H}^m\big(\Omega\setminus\Phii_{p_L}( K_L)\big)\le
 \mathcal{H}^m\big(\Phii_{p_L}(B_s\cap\p_{p_L+\pi_L}(\mathscr{D}(L)))\big)\le C_1 \bmo^{1+\gamma_2}\ell(L)^{m+2+\gamma_2}
 \end{equation}
 and notice that 
 \begin{equation}\label{cdscasa2}
     \mathcal{H}^m(\Omega\setminus \cK)\le C_1\bmo^{1+\gamma_2}\ell(L)^{m+2+\gamma_2},
 \end{equation}  as  every  cube of the Whitney decomposition intersecting $ B_{\sfrac{\ell(L)}{4}}(q)\subseteq B_{2\sqrt{m}\ell(L)}(x_L)$ has side-length at most $C_0\ell(L)$ and we can use  \eqref{seipuntocinque}.
 
 By what remarked above, $$|f_L(x)|\ge\sfrac{5}{8} C_h\bmo^{\sfrac{1}{2}}\ell(L)^{1+\beta_2} \quad\text{for every $x\in K_L$}.$$ 
	Using \cite[(5.3)]{DSsns}, we have that for every $x\in B_s\cap K_L$
	\begin{equation}\label{cdscasa}
		\begin{split}
			2\sqrt{Q}|N_L(\Phii_{p_L}(x))|&\ge \cG(f_L(x),Q\a{\bPsi_{p_L}(x)})\ge \cG(f_L(x),Q\a{0})-\cG(Q\a{0},Q\a{\bPsi_{p_L}(x)})\\&\ge
			\sfrac{5}{8}C_h\bmo^{\sfrac{1}{2}}\ell(L)^{1+\beta_2}-C_0\bmo^{\sfrac{1}{2}}\ell(L)^2
			\ge \bmo^{\sfrac{1}{2}}\ell(L)^{1+\beta_2}(\sfrac{5}{8}C_h-C_0\ell(L)^{1-\beta_2})
			\\&	\ge \sfrac{1}{2}C_h\bmo^{\sfrac{1}{2}}\ell(L)^{1+\beta_2},
		\end{split}
	\end{equation}
	provided that $C_h\ge C_0$. 
 Recall that by $\rm(A2)$ of Definition \ref{d:app}, and the definition of $\mathscr{D}(L)$ (with the proof of  $\rm(A2)$ of Definition \ref{d:app} mind),
 \begin{equation}\label{cdscasa1}
 N(p)=N_L(p)\quad\text{for every  }p\in\Phii_{p_L}(B_s)\cap \cK\setminus  \p(\mathscr{D}(L)).
 \end{equation}
Then, by \eqref{cdscasa} and \eqref{cdscasa1},
	\begin{equation}\label{cdscasa3}
		 2\sqrt{Q}	|N(p)|\ge \sfrac{1}{2}C_h\bmo^{\sfrac{1}{2}}\ell(L)^{1+\beta_2}\quad\text{for every $p\in \Omega\cap\cK\cap\Phii_{p_L}(K_L)\setminus  \p(\mathscr{D}(L))$}.
	\end{equation}

	
	Now,  $\mathcal{H}^m(\Omega)\ge C_0^{-1} \ell(L)^m$, hence, by \eqref{cdscasa3}, \eqref{vrfeacda},  \eqref{cdscasa2}, and \eqref{cdssdc},
	\begin{align*}
		2\sqrt{Q}\int_\Omega |N|^2&\ge  2\sqrt{Q}\int_{\Omega\cap\cK \cap\Phii_{p_L}(K_L)\setminus  \p(\mathscr{D}(L))}|N|^2\\&\ge\sfrac{1}{4} C_h^2\bmo\ell(L)^{2+2\beta_2} \mathcal{H}^m\big({\Omega\cap\cK \cap\Phii_{p_L}(K_L)\setminus  \p(\mathscr{D}(L))}\big) 
		\\&\ge\sfrac{1}{4} C_h^2\bmo\ell(L)^{2+2\beta_2} \big(C_0^{-1}\ell(L)^m-C_1\bmo^{1+\gamma_2}\ell(L)^{m+2+\gamma_2}\big)
		\\&\ge C_0^{-1} C_h^2\bmo\ell(L)^{m+2+2\beta_2},
	\end{align*}
	provided that $\eps_0$ is sufficiently small  (depending upon all other parameters).
\end{proof}

\begin{lem}[Unique continuation for $\D$-minimizers]\label{l:UC}
	For every $\bar\ell \in (0,1)$ and $\bar c>0$, there exists $\bar\gamma=\bar\gamma(m,n,Q,\bar\ell,\bar c)>0$ with the following property.
	If $w: \R^m\supseteq B_{2 r} \to \Iq(\R^k)$ is $\D$-minimizing,
	$\D(w, B_r)\ge \bar c$ and  $\D(w, B_{2r}) =1$,
	then
	\[
	\D (w, B_s (q)) \ge \bar\gamma \quad \text{and}\quad \int_{B_s(q)}|w|^2\ge \bar\gamma
	\]
	for every $B_s(q)\subseteq B_{2r}$ with $s \ge \bar\ell r$
\end{lem}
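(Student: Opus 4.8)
The plan is to argue by contradiction and compactness, reducing Lemma~\ref{l:UC} to the qualitative fact that \emph{a $\D$-minimizer which is constant on a non-empty open set is constant on its whole (connected) domain}. By the scale invariance of the Dirichlet energy we may assume $r=1$, so the domain is $B_2$. If the conclusion failed for some $\bar\ell,\bar c$, there would exist $\D$-minimizers $w_j\colon B_2\to\Iq(\R^k)$ with $\D(w_j,B_1)\ge\bar c$ and $\D(w_j,B_2)=1$, together with balls $B_{s_j}(q_j)\subseteq B_2$, $s_j\ge\bar\ell$, along which (after extracting a subsequence) \emph{either} $\D(w_j,B_{s_j}(q_j))\to0$ \emph{or} $\int_{B_{s_j}(q_j)}|w_j|^2\to0$.

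Since $\sup_j\D(w_j,B_2)<\infty$, by the compactness theorem for $\D$-minimizers (De Lellis--Spadaro) a subsequence of $(w_j)$ converges strongly in $W^{1,2}_{\mathrm{loc}}(B_2)$, hence in $L^2_{\mathrm{loc}}(B_2)$, to a $\D$-minimizer $w$ on $B_2$, with $\D(w_j,A)\to\D(w,A)$ and $\int_A|w_j|^2\to\int_A|w|^2$ for every open $A$ with $\bar A\subseteq B_2$; moreover $s_j\to s\in[\bar\ell,2]$ and $q_j\to q$ with $B_s(q)\subseteq B_2$. For each $\eps>0$ the ball $\bar B_{s-\eps}(q)$ is compactly contained in $B_2$ and eventually contained in $B_{s_j}(q_j)$, so monotonicity of the integrals under inclusion together with the degeneration hypothesis yields $\D(w,B_{s-\eps}(q))=0$ in the first case and $\int_{B_{s-\eps}(q)}|w|^2=0$ in the second; letting $\eps\downarrow0$, in both cases $w$ equals a fixed $T_0\in\Iq(\R^k)$ ($\mathcal L^m$-a.e.) on the open ball $B_s(q)$. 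On the other hand $\D(w,B_1)=\lim_j\D(w_j,B_1)\ge\bar c>0$, so $w$ is \emph{not} constant on $B_2$, which is the desired contradiction once the reduced statement is established.

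For the reduced statement I would use Almgren's frequency function and its monotonicity for $\D$-minimizers. Given a $\D$-minimizer $w$ on $B_2$ with $w\equiv T_0$ on some ball, set $S\defeq\{p\in B_2\colon w\equiv T_0\text{ on a neighbourhood of }p\}$, an open non-empty subset of the connected set $B_2$; it suffices to show $S$ is closed in $B_2$. Fix $p_*\in\bar S\cap B_2$, put $R\defeq\dist(p_*,\partial B_2)>0$, and pick $q\in S$ with $|q-p_*|<R/4$. For $\sigma\in(0,R_q)$, $R_q\defeq\dist(q,\partial B_2)$, consider $H(\sigma)\defeq\int_{\partial B_\sigma(q)}\cG(w,T_0)^2$, $D(\sigma)\defeq\int_{B_\sigma(q)}|Dw|^2$, and, where $H>0$, the frequency $I(\sigma)\defeq\sigma D(\sigma)/H(\sigma)$; one has the first-variation identity $\frac{d}{d\sigma}\log\bigl(\sigma^{1-m}H(\sigma)\bigr)=2I(\sigma)/\sigma$, and by \cite{DSq} $I$ is monotone non-decreasing on any interval on which $H>0$. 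Since $w\equiv T_0$ near $q$, $H$ vanishes on an initial interval; if $H$ were not identically $0$ on $(0,R_q)$, choose a connected component $(a,b)$ of $\{H>0\}$, so $a>0$ and $H(a)=0$ by continuity. Integrating the logarithmic-derivative identity between $\sigma\downarrow a$ and a fixed $\sigma_1\in(a,b)$ forces the left-hand side to $+\infty$ (because $\sigma^{1-m}H(\sigma)\to a^{1-m}H(a)=0$), whereas the right-hand side stays bounded by $2I(\sigma_1)\log(\sigma_1/a)<\infty$ by monotonicity of $I$ and $a>0$ --- a contradiction. Hence $H\equiv0$ on $(0,R_q)$, i.e. $w\equiv T_0$ on $B_{R_q}(q)$; since $R_q\ge R-|q-p_*|>3R/4$ this ball contains $B_{R/2}(p_*)$, so $p_*\in S$. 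Thus $S=B_2$ and $w\equiv T_0$ on $B_2$.

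The main obstacle is precisely this last step, unique continuation for $\D$-minimizers: unlike for a single elliptic equation it is not a formal consequence of ellipticity, because the sheets of a $Q$-valued $\D$-minimizer may branch, and one must make sure the frequency apparatus --- the monotonicity of $I$ and the identity $H'(\sigma)=\tfrac{m-1}{\sigma}H(\sigma)+2D(\sigma)$ --- is available relative to an arbitrary constant base value $T_0$ and on arbitrary radial intervals where $H>0$, not only on intervals abutting the centre. If such a statement cannot be quoted verbatim, it can be assembled from (i) the $H'$-formula and the monotonicity of $I$ for $\D$-minimizers in \cite{DSq} --- the case of a general $T_0$ being a routine modification, or reduced to $T_0=Q\a{0}$ by the locality of $\D$-minimizers --- together with (ii) the elementary comparison showing that $H(\sigma_0)=0$ already forces $\D(w,B_{\sigma_0}(q))=0$, hence $w\equiv T_0$ on $B_{\sigma_0}(q)$, which in particular disposes of the degenerate case $\{H>0\}=\emptyset$.
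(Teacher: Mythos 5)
Your compactness-and-contradiction reduction is exactly the argument behind \cite[Lemma 7.1]{DS2}, which is precisely what the paper invokes here (the paper only adds the $L^2$-conclusion and otherwise refers to the proof of that lemma). One technical point you omit: the compactness theorem for $\D$-minimizers in \cite{DSq} needs a uniform $L^2$ bound, not just a Dirichlet bound. In the Dirichlet-degeneration case you should first translate $w_j$ by the constant $\etaa\circ w_j(q_0)$ for a fixed base point $q_0$ (which leaves all Dirichlet energies unchanged but controls the $L^2$ norms via the Poincar\'e inequality and interior estimates for the harmonic function $\etaa\circ w_j$), while in the $L^2$-degeneration case the hypothesis itself already controls the additive constant. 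This is easily repaired but should be said.

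The real issue is your treatment of the unique-continuation step (UC). The paper, like \cite{DS2}, simply cites it; you attempt to prove it, which is more ambitious, and your argument is correct when $T_0=Q\a{a}$ for a single $a$: after translating by $a$ one is in the \cite{DSq} setting where $H'(\sigma)=\tfrac{m-1}{\sigma}H(\sigma)+2D(\sigma)$ and the frequency is non-decreasing wherever $H>0$. But for $T_0=\sum_i\a{a_i}$ with the $a_i$ not all equal, the claim that this is ``a routine modification'' of \cite{DSq} is not right. The integrand $\cG(w,T_0)^2=\min_\pi\sum_i|w_i-a_{\pi(i)}|^2$ involves an $x$-dependent optimal matching $\pi_x$, so differentiating $H$ produces a term $\int_{\partial B_\sigma}\sum_i\langle a_{\pi_x(i)},\partial_\nu w_i\rangle$; the only general mechanism to make such a term vanish is to rewrite it as a fixed vector paired against $\int_{\partial B_\sigma}\partial_\nu(\etaa\circ w)=0$, which requires all $a_i$ to coincide. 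Nor does ``locality'' by itself close the gap: the local decomposition of $w$ near $q$ into pieces $w^{(l)}$ clustered around the distinct $a_l$ holds only on the largest ball where the ranges remain separated, and one must still show that ball is the whole domain. The argument that works is an induction on $Q$: for $T_0$ with a single value, run the frequency function as you do; for $T_0$ with at least two distinct values, decompose $w=\sum_l w^{(l)}$ on the maximal ball of separation, apply the inductive hypothesis to each $w^{(l)}$ (each has multiplicity strictly less than $Q$) to conclude $w\equiv T_0$ there, and then use continuity of $w$ at the boundary of that ball to see the separation persists, forcing the maximal ball to exhaust the domain. Your writeup gestures at a reduction of this type but does not carry it out, so the (UC) step is not actually established as written.
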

\begin{proof}
	The statement  is as in \cite[Lemma 7.1]{DS2}, up to the fact that we are also claiming $\int_{B_s(q)}|w|^2\ge \bar\gamma$. This is obtained with a contradiction argument as in the proof of \cite[Lemma 7.1]{DS2}, exploiting (UC) in the proof of \cite[Lemma 7.1]{DS2}.
\end{proof}

\begin{proof}[Proof of Proposition \ref{p:splitting}]
	Fix $L\in\sW_e$, let $H\in\sS$ be its father and $J\in\sS$ its ancestor with $\ell(J)=2^6\ell(L)$.
	Notice that by item $\rm(v)$ of Proposition \ref{prop:tilting},
	\begin{equation}\label{contain}
		\supp(T)\cap \bC_{32r_J}(p_H,\pi_H)\subseteq\supp(T)\cap\bC_{36r_J}(p_J,\pi_H)\subseteq\bC_J.
	\end{equation}
	Therefore, recalling item $\rm(ii)$ of Proposition \ref{prop:tilting} and \eqref{eq3}, as $J\in\sS$,
	\begin{align*}
		E&\defeq \bE(T,\bC_{32r_J}(p_H,\pi_H),\pi_H)\\&\le 2^2 \bE(T,\bC_{32r_J}(p_J,\pi_H),\pi_J)+C_0r_J^{-m}|\vec\pi_J-\vec\pi_H|^2\|T\|(\bC_J)
		\\&\le 2^{2+m}C_e\bmo\ell(L)^{2-2\delta_2}+ C(M_0)\bmo\ell(L)^2, 
	\end{align*}
	so that, if $N_0\ge C(M_0)$,
	\begin{equation}\label{exaa}
		E\le 2^{m+3}C_e\bmo\ell(L)^{2-2\delta_2}
	\end{equation}
	(in particular, we can make $E$ arbitrary small by taking $\eps_0$ small enough).
	
	For brevity, we set $\bC\defeq \bC_{32r_J}(p_H,\pi_H)$ and $B\defeq B_{8r_J}(p_H,\pi_H)$.
	By Lemma \ref{giraerigira},
	$$(\p_{p_H+\pi_H})_\sharp (T\mres \bC)=Q\a{B_{32 r_J}(p_H,\pi_H)}.$$ Then, building upon \cite[Theorem 2.4]{DS1}, arguing as at the beginning of the proof of Theorem \ref{t:approx} (provided $\eps_0$ is sufficiently small), we obtain the following objects. The map $f\defeq f_J:B\to \Iq(\pi_H^\perp)$ and the associated sets $K\defeq K_J\subseteq B$, $\mathscr{D}(J)\subseteq B\times\pi_H^\perp$, satisfying
	\begin{equation}\label{ldsani}
		\mathcal{H}^m(K)+\mathcal{H}^m(\mathscr{D}(J))+\|T\| (\mathscr{D}(J))\le C_1\bmo^{1+\gamma_2}\ell(L)^{m+2+\gamma_2}.
	\end{equation}
	By the first claim of item $\rm(v)$ of Proposition \ref{prop:tilting} together with \eqref{contain}, 
	\begin{equation}\label{rafdsam}
		\bs(T,\bC,p_H+\pi_H)\le C_1\bmo^{\sfrac{1}{2}}\ell(L)^{1+\beta_2},
	\end{equation} so that, recalling the construction of \cite[Theorem 2.4]{DS1}
	\begin{equation}\label{rafdsa}
		\|f\|_{C^0(B)}\le C_1\bmo^{\sfrac{1}{2}}\ell(L)^{1+\beta_2}.
	\end{equation}
	\medskip
	\\\textbf{Step 1: decay estimate for $f$.}
	Take now $V\subseteq B_{8r_J}(p_H,\pi_H)$ open. By the Taylor expansion of the mass (e.g.\ \cite[Corollary 3.3]{DSsns}) and \cite[(2.6)]{DS1} (we assume that $\eps_0$ is smaller than a geometric constant),
	\begin{align*}
		\big|	\|\bT_{f}\|(V\times\pi_H^\perp)-Q\mathcal{H}^m(V)-\sfrac{1}{2}\D(f,V)\big|&\le C_0 E^{2\gamma_1}\D(f,B)\\&\le C_0 E^{2\gamma_1}E(8r_J)^m\le C_0E^{1+\gamma_1}r_J^m,
	\end{align*}
	so that, recalling \eqref{ldsani} and the relations among $\gamma_1,\gamma_2$ and $\delta_2$,
	\begin{equation}\label{diriV}
		\begin{split}
			\big|	\|T\|(V\times\pi_H^\perp)-Q\mathcal{H}^m(V)-\sfrac{1}{2}\D(f,V)\big|& \le C_0E^{1+\gamma_1}r_J^m+C_1\bmo^{1+\gamma_2}\ell(L)^{m+2+\gamma_2}\\&\le C_1\bmo^{1+\gamma_2}\ell(L)^{m+2+\gamma_2}.
		\end{split}
	\end{equation}
	
	Now we set $$\rho\defeq 64r_L.$$
	On the one hand, by \eqref{diriV},
	\begin{equation}\label{vsda1}
		\D(f, B_{4\rho}(p_H,\pi_H))	\le \omega_m(4\rho)^m\bE(T,\bC_{4\rho}(p_H,\pi_H),\pi_H)+C_1\bmo^{1+\gamma_2}\ell(L)^{m+2+\gamma_2},
	\end{equation}
	and, on the other hand, again by \eqref{diriV},
	\begin{equation}\label{vsda2}
		\D(f, B_{2\rho}(p_H,\pi_H))	\ge \omega_m (2\rho)^m\bE(T,\bC_{2\rho}(p_H,\pi_H),\pi_H)-C_1\bmo^{1+\gamma_2}\ell(L)^{m+2+\gamma_2}.
	\end{equation}
	By noticing that $\bE(T,\bC_{4\rho}(p_H,\pi_H),\pi_H)\le 2^{3m}E$ and exploiting \eqref{exaa}, we deduce, from \eqref{vsda1},
	\begin{equation}\label{vsda1a}
		\D(f, B_{4\rho}(p_H,\pi_H))	\le C_0C_e\bmo\ell(L)^{m+2-2\delta_2}+C_1\bmo^{1+\gamma_2}\ell(L)^{m+2+\gamma_2}\le C_0C_e\bmo\ell(L)^{m+2-2\delta_2},
	\end{equation}
	provided that $\eps_0$ is small enough.
	Also, if $\eps_0$ is small enough, $$\supp(T)\cap \bC_{L}\subseteq \supp(T)\cap \bC_{2\rho}(p_H,\pi_H)\subseteq\bC_J.$$ Indeed, for the first inclusion, take $z\in \supp(T)\cap \bC_{L}$ and compute, using item $\rm(ii)$ of Proposition \ref{prop:tilting},
	\begin{align*}
		|\p_{\pi_H}(z-p_H)|&\le|p_H-p_L|+ |\p_{\pi_H}-\p_{\pi_L}||z-p_L|+|\p_{\pi_L}(z-p_L)|
		\\&\le 2\sqrt{m}\ell(H)+C_0\bmo^{\sfrac{1}{2}}\ell(L)+64 r_L,
	\end{align*}
	whereas, for the second inclusion, if $z\in \supp(T)\cap \bC_{2\rho}(p_H,\pi_H)$,  using item $\rm(ii)$ of Proposition \ref{prop:tilting},
	\begin{align*}
		|\p_{\pi_J}(z-p_J)|&\le|p_J-p_H|+ |\p_{\pi_J}-\p_{\pi_H}||z-p_H|+|\p_{\pi_H}(z-p_H)|
		\\&\le 2\sqrt{m}\ell(J)+C_0\bmo^{\sfrac{1}{2}}\ell(J)+128r_L.
	\end{align*}
	We can then compute, by \eqref{eq3} and item $\rm(ii)$ of Proposition \ref{prop:tilting},
	\begin{align*}
		C_e \bmo\ell(L)^{2-2\delta_2}&\le\bE(\bC_L,\pi_L)\le 2^m \bE(T,\bC_{2\rho}(p_H,\pi_H),\pi_L)\\&\le  2^{m+2} \bE(T,\bC_{2\rho}(p_H,\pi_H),\pi_H)+ C_0 \rho^{-m}\|T\|(\bC_J)|\pi_L-\pi_H|^2
		\\&\le  2^{m+2} \bE(T,\bC_{2\rho}(p_H,\pi_H),\pi_H)+C(M_0)\bmo\ell(L)^2.
	\end{align*}
	Therefore, if $N_0\ge C(M_0)$,
	\begin{equation}\notag
		C_e \bmo\ell(L)^{2-2\delta_2}\le 2^{m+3} \bE(T,\bC_{2\rho}(p_H,\pi_H),\pi_H).
	\end{equation}
	Hence, \eqref{vsda2} reads
	\begin{equation}\label{vsda2a}
		\begin{split}
			\D(f, B_{2\rho}(p_H,\pi_H))	&\ge \omega_m (2\rho)^m2^{-m-3} C_e\bmo\ell(L)^{2-2\delta_2}-C_1\bmo^{1+\gamma_2}\ell(L)^{m+2+\gamma_2}\\&\ge  \omega_m  C_e\bmo\ell(L)^{m+2-2\delta_2},
		\end{split}
	\end{equation}
	provided that $\eps_0$ is small enough.
	\medskip\\\textbf{Step 2: harmonic approximation.}
	Let $\bar\eta\in(0,1)$ to be fixed later.
	Then, provided $E$ is small enough (depending on $\bar \eta$), we can apply \cite[Theorem 2.6]{DS1} and obtain a $\D$-minimizing map $u:B\to \Iq(\pi_H^\perp)$ with  (the second inequality is due to  \eqref{exaa})
	\begin{equation}\label{vsda3}
		r_J^{-2}\int_B\cG(f,u)^2+\int_B \big| |D f|^2-|Du|^2\big|\le \bar\eta E r_J^m\le \bar \eta C_0 M_0^m C_e\bmo\ell(L)^{m+2-2\delta_2}.
	\end{equation}
	On the one hand, by \eqref{vsda1a} and \eqref{vsda3}, 
	\begin{equation}\label{vsda1b}
		\begin{split}
			\D(u ,B_{4\rho}(p_H,\pi_H))	&\le C_0C_e \bmo\ell(L)^{m+2-2\delta_2}+ \bar \eta C_0 M_0^m C_e\bmo\ell(L)^{m+2-2\delta_2}\\&\le C_0C_e \bmo\ell(L)^{m+2-2\delta_2},
		\end{split}
	\end{equation}
	and, on the other hand, by  \eqref{vsda2a} and \eqref{vsda3}, 
	\begin{equation}\label{vsda2b}
		\begin{split}
			\D(u, B_{2\rho}(p_H,\pi_H))&\ge \omega_m  C_e\bmo\ell(L)^{m+2-2\delta_2}-  \bar \eta C_0 M_0^m C_e\bmo\ell(L)^{m+2-2\delta_2}
			\\&\ge \sfrac{\omega_m}{2}  C_e\bmo\ell(L)^{m+2-2\delta_2},
		\end{split}
	\end{equation}
	where we are assuming that $\bar \eta$ is sufficiently small, depending upon $M_0$ and a geometric quantity (hence that $\eps_0$ is sufficiently small, depending upon all other parameters).
	
	Combining \eqref{vsda1b} and \eqref{vsda2b}, we obtain
	\begin{align*}
		\frac{\D(u, B_{2\rho}(p_H,\pi_H))}{\D(u, B_{4\rho}(p_H,\pi_H))}\ge \frac{\sfrac{\omega_m}{2}  C_e\bmo\ell(L)^{m+2-2\delta_2}}{C_0C_e \bmo\ell(L)^{m+2-2\delta_2}}\ge c_0,
	\end{align*}
	where $c_0$ is a geometric constant. 
	Set $\bar \ell\defeq \sfrac{\ell(L)}{32\rho}=(2^{11}\sqrt{m}M_0)^{-1}$.
	We now apply Lemma \ref{l:UC} to  $ \D(u, B_{4\rho}(p_H,\pi_H))	^{-\sfrac{1}{2}}u$, with $2\rho$ in place of $r$, and the choice above for $\bar \ell$ to obtain that, for a  constant $\bar\gamma$ that depends on a geometric quantity and on $M_0$ (in particular, $\bar\gamma$ is independent of $\bar\eta$ and $\eps_0$),
	\begin{alignat*}{3}
		\D (u, B_{\sfrac{\ell(L)}{16}} (q',\pi_H)) &\ge \bar\gamma  \D(u, B_{4\rho}(p_H,\pi_H))&&\ge \sfrac{\omega_m}{2}\bar \gamma C_e\bmo\ell(L)^{m+2-2\delta_2} ,\\
		\int_{B_{\sfrac{\ell(L)}{16}} (q',\pi_H)}|u|^2&\ge \bar\gamma\D(u, B_{4\rho}(p_H,\pi_H)&&\ge \sfrac{\omega_m}{2}\bar\gamma C_e\bmo\ell(L)^{m+2-2\delta_2},
	\end{alignat*}
	for every $B_{\sfrac{\ell(L)}{16}} (q',\pi_H)\subseteq B_{4\rho}(p_H,\pi_H)$, where we used \eqref{vsda2b}.
	
	Therefore, assuming that $\bar\eta$  is smaller than a constant that depends only upon $M_0,\bar \gamma$ and a geometric quantity  (i.e.\ that $\eps_0$ is sufficiently small, depending upon all other parameters), using also \eqref{vsda3}, 
	\begin{equation}\label{dsaacs1}
		\begin{split}
			\D (f, B_{\sfrac{\ell(L)}{16}} (q',\pi_H)) &\ge \sfrac{\omega_m}{2}\bar \gamma C_e\bmo\ell(L)^{m+2-2\delta_2} -  \bar \eta C_0 M_0^m C_e\bmo\ell(L)^{m+2-2\delta_2}\\&\ge\sfrac{\omega_m}{4}\bar \gamma C_e\bmo\ell(L)^{m+2-2\delta_2}
		\end{split}
	\end{equation}
	and also
	\begin{equation}\label{dsaacs2}
		\begin{split}
			\int_{B_{\sfrac{\ell(L)}{16}} (q',\pi_H)}|f|^2&\ge \sfrac{\omega_m}{2}\bar\gamma C_e\bmo\ell(L)^{m+2-2\delta_2}-  \bar \eta C_0 M_0^m C_e\bmo\ell(L)^{m+2-2\delta_2}\\&\ge\sfrac{\omega_m}{4}\bar \gamma C_e\bmo\ell(L)^{m+2-2\delta_2}.
		\end{split}
	\end{equation}	 
	\medskip\\\textbf{Step 3: estimate for the $\cM$-normal approximation.}
	We define $$q'\defeq \p_{p_H+\pi_H}(\Phii(q)),$$ notice that, by \eqref{cdscas} and simple considerations,
	\begin{equation}\label{ffdadc0}
		B_{2\ell(L)}(q',\pi_H)\subseteq B_{4\rho}(p_H,\pi_H)
	\end{equation}
	so that \eqref{dsaacs1} and \eqref{dsaacs2} hold for this choice of $q'$. 
	Also, if $\eps_0$ is small enough,
	\begin{equation}\label{ffdadc}
		B_{{\ell(L)}}(q',\pi_H)\supseteq\p_{p_H+\pi_H}(\Omega)\supseteq B_{\sfrac{\ell(L)}{8}}(q',\pi_H),
	\end{equation}
	by simple geometric considerations. Indeed, the first inclusion is trivial, and, for the second inclusion, take $z'\in B_{\sfrac{\ell(L)}{8}}(q',\pi_H)$. Let then $z$ be the unique point in $\cM$ with $(z-z')\perp\pi_H$, then, by the estimates on $\Phii_{p_H}$, $|z-\Phii(q)|\le 2\sqrt{m}\sfrac{\ell(L)}{8}$ and, recalling also item $\rm(iii)$ of Proposition \ref{prop:tilting},
	\begin{align*}
		|\p_{\pi_0}(z-\Phii(q))|\le |\p_{\pi_0}-\p_{\pi_H}||z-\Phii(q)|+ |\p_{\pi_H}(z-\Phii(q))|\le C_0\bmo^{\sfrac{1}{2}}\ell(L)+\sfrac{\ell(L)}{8},
	\end{align*}
 so that $z=\Phii(z'')$ with $z''\in\Omega$, provided that $\eps_0$ is small enough.
	
	Now notice that every Whitney region $\cL'$ that intersects $\Omega$ corresponds to $L'$ with $\ell(L')\le C_0\ell(L)$. This is due to the stopping condition (NN). We are going to use this fact several times in the rest of the paragraph.
 Summing \eqref{e:err_regional} over all the Whitney regions that intersect $\Omega $,
	\begin{equation}\label{asasa}
		\mathcal{H}^m(\Omega\setminus\cK)+\|\bT_F-T\|(\p^{-1}(\Omega))\le C_1\bmo^{1+\gamma_2}\ell(L)^{m+2+\gamma_2}.
	\end{equation}
	For the same reason, using also item $\rm(ii)$ of Corollary \ref{c:cover}, for every $z\in\Omega$  and $z'\in\supp(T)\cap\p^{-1}(z)$, $|z-z'|\le C_1\bmo^{\sfrac{1}{2}}\ell(L)^{1+\beta_2}$. In particular, $$|\p_{\pi_H}(z'-q')|\le |z'-\Phii(q)|\le |z'-z|+|z-\Phii(q)|\le C_1\bmo^{\sfrac{1}{2}}\ell(L)^{1+\beta_2}+2\sfrac{\ell(L)}{4}\le\ell(L),
	$$
	if $\eps_0$ is smaller than a geometric constant. Therefore,
	$\supp(T)\cap\p^{-1}(\Omega)\subseteq \bC_{8r_J}(p_H,\pi_H),$ so that by \eqref{ldsani},
	\begin{equation}\label{asasa1}
		\|\bG_f-T\|(\p^{-1}(\Omega))\le C_1\bmo^{1+\gamma_2}\ell(L)^{m+2+\gamma_2}.
	\end{equation}
	By the same argument as above, together with \eqref{e:Dir_regional}, yields
	\begin{equation}\label{klcsakc}
		\int_\Omega |DN|^2\le C_1\bmo\ell(L)^{m+2-2\delta_2}\quad\text{and}\quad	\|N\|_{C^0(\Omega )}\le C_1\bmo^{\sfrac{1}{2}}\ell(L)^{1+\beta_2}.
	\end{equation}
	and, with \eqref{e:Lip_regional},
	\begin{equation} \label{klcsakc1}
		|DN|\le C_1\bmo^{\gamma_2}\ell(L)^{\gamma_2}\quad\text{a.e.\ on }\Omega
	\end{equation}
	(notice that we are not claiming a bound on $\Lip(N|_\Omega)$).

	Now recall  \eqref{ffdadc0}, \eqref{ffdadc}, the estimates on  $\|\bPsi_{p_H}\|_{C^2}$, and \eqref{rafdsa}, so that we can take $\eps_0$ small enough and apply \cite[Theorem 5.1]{DSsns} to obtain (by restriction) maps $F':\Omega\to \Iq(\bU)$ and $N':\Omega\to \Iq(\RR^{m+n})$ (which is the normal part of $F'$) such that 
	$
	\bT_{F'}\mres \p^{-1}(\Omega)=\bG_f\mres\p^{-1}(\Omega).
	$ 
	\medskip\\\textbf{Step 4: proof of \eqref{e:split_1}.} The first inequality of \eqref{e:split_1} is the definition of $L\in\sW_e$, so we only have to show the second inequality.
	Now we use \cite[Proposition 3.4]{DSsns} (with manifold $p_H+\pi_H$, provided $\eps_0$ is small enough), or alternatively \cite[Theorem 3.2]{DSsns}, to get
	\begin{align*}
		\D (f, B_{\sfrac{\ell(L)}{16}} (q',\pi_H))&\le \int_{\bC_{\sfrac{\ell(L)}{16}}(q',\pi_H)} |\vec\bG_f-\vec\pi_H|^2\dd\|\bG_f\|+C_0\int_{B_{\sfrac{\ell(L)}{16}}}|Df|^4
		\\&\le  \int_{\bC_{\sfrac{\ell(L)}{16}}(q',\pi_H)} |\vec\bG_f-\vec\pi_H|^2\dd\|\bG_f\|+C_0E^{2\gamma_1}\D (f, B_{\sfrac{\ell(L)}{16}}(q',\pi_H)).
	\end{align*}
	Therefore, recalling \eqref{exaa}, if $\eps_0$ is small enough,
	\begin{equation}\label{kdsaa}
		\D (f, B_{\sfrac{\ell(L)}{16}} (q',\pi_H))\le2 \int_{\bC_{\sfrac{\ell(L)}{16}}(q',\pi_H)} |\vec\bG_f-\vec\pi_H|^2\dd\|\bG_f\|.
	\end{equation}
	
	We are going to use that 
	\begin{equation}\label{sassdfa}
		\supp(T)\cap \bC_{\sfrac{\ell(L)}{16}}(q',\pi_H)\subseteq \p^{-1}(\Omega).
	\end{equation} Indeed, take $z\in \supp(T)\cap \bC_{\sfrac{\ell(L)}{16}}(q',\pi_H)$, that is $z\in\supp(T)\cap \bC_{\sfrac{\ell(L)}{16}}(q,\pi_H)$, we have to show that $\p(z)\in\Omega$. Now, by \eqref{rafdsam} and the estimate on $\|\Psi_{p_H}\|_{C^2}$, we have that there exits $z'\in \cM\cap \bC_{\sfrac{\ell(L)}{16}}(q,\pi_H)$ with $|z-z'|\le C_1\bmo^{\sfrac{1}{2}}\ell(L)^{1+\beta_2}+C_0\bmo^{\sfrac{1}{2}}M_0^2\ell(L)^2$, where we also recalled \eqref{ffdadc0}. Hence, if $\eps_0$ is small enough (depending upon all the other parameters), $|z-z'|\le \sfrac{\ell(L)}{16}$, so that $\p(z)\in \bC_{\sfrac{\ell(L)}{8}}(q',\pi_H)$, hence $\p(z)\in\Omega$ by \eqref{ffdadc}.
	
	Now, by \eqref{asasa1} and \eqref{asasa} (with \eqref{sassdfa}), using also \eqref{eq3} and the estimates on $\|\bPsi_{p_H}\|_{C^1}$ (with  \eqref{ffdadc0} and \eqref{contain}), if  $\vec\cM$ denotes the unit $m$-vector orienting $T\cM$,
	\begin{align*}
		& \int_{\bC_{\sfrac{\ell(L)}{16}}(q',\pi_H)} |\vec\bG_f-\vec\pi_H|^2\dd\|\bG_f\|\le
		\int_{\bC_{\sfrac{\ell(L)}{16}}(q',\pi_H)} |\vec T-\vec\pi_H|^2\dd\|T\|+C_1\bmo^{1+\gamma_2}\ell(L)^{m+2+\gamma_2}
		\\&\quad\le 4\int_{\bC_{\sfrac{\ell(L)}{16}}(q',\pi_H)} |\vec T(p)-\vec\cM(\p(p))|^2\dd\|T\|(p)\\&\quad\quad+
		C_0 \bmo\ell(L)^2 \|T\|(\bC_{\sfrac{\ell(L)}{16}}(q',\pi_H))+
		C_1\bmo^{1+\gamma_2}\ell(L)^{m+2+\gamma_2}
		\\&\quad\le 4\int_{\bC_{\sfrac{\ell(L)}{16}}(q',\pi_H)} |\vec T(p)-\vec\cM(\p(p))|^2\dd\|T\|(p)+C(M_0) \bmo\ell(L)^{m+2}+
		C_1\bmo^{1+\gamma_2}\ell(L)^{m+2+\gamma_2}
		\\&\quad\le 4\int_{\bC_{\sfrac{\ell(L)}{16}}(q',\pi_H)} |\vec \bT_F(p)-\vec\cM(\p(p))|^2\dd\|\bT_F\|(p)+C(M_0) \bmo\ell(L)^{m+2}+
		C_1\bmo^{1+\gamma_2}\ell(L)^{m+2+\gamma_2}
	\end{align*}
	so that, recalling \eqref{kdsaa} and using \eqref{dsaacs1}
	\begin{equation}\label{fredasc}
		\begin{split}
			\sfrac{\omega_m}{4}\bar \gamma C_e\bmo\ell(L)^{m+2-2\delta_2}&\le 8\int_{\bC_{\sfrac{\ell(L)}{16}}(q',\pi_H)} |\vec \bT_F(p)-\vec\cM(\p(p))|^2\dd\|\bT_F\|(p)\\&\quad+ C(M_0) \bmo\ell(L)^{m+2}+
			C_1\bmo^{1+\gamma_2}\ell(L)^{m+2+\gamma_2}
		\end{split}
	\end{equation}
	
	By using \cite[Proposition 3.4]{DSsns} (if $\eps_0$ is small enough), recalling \eqref{klcsakc} and \eqref{klcsakc1}, plugging in also \eqref{fredasc},
	\begin{align*}
		\int_{\Omega}|D N|^2&\ge \int_{\p^{-1}(\Omega)}|\vec \bT_{F}(p)-\vec\cM(\p(p))|^2\dd\|\bT_F\|(p)-C_0\bigg(\bmo\int_\Omega|N|^2+ \int_\Omega|DN|^4\bigg)
		\\&\ge 	\sfrac{\omega_m}{32}\bar \gamma C_e\bmo\ell(L)^{m+2-2\delta_2}- C(M_0) \bmo\ell(L)^{m+2}-
		C_1\bmo^{1+\gamma_2}\ell(L)^{m+2+\gamma_2} \\&\quad- C_1\big(\bmo^2 \ell(L)^{m+2+2\beta_2} +\bmo^{1+2\gamma_2}\ell(L)^{m+2-2\delta_2+2\gamma_2}\big)
		\\&\ge  \sfrac{\omega_m}{64}\bar \gamma \bmo\ell(L)^{m+2-2\delta_2},
	\end{align*}
	provided that we first choose  $N_0\ge C(M_0)$ and then $\eps_0$ sufficiently small (depending upon all other parameters), recalling that $\bar\gamma$ depends only on $M_0$ and a geometric quantity and that the $C_e$ appearing can be estimated from below by $5$. Therefore, the second inequality in \eqref{e:split_1} follows from \eqref{eq2}.
	\medskip\\\textbf{Step 5: proof of \eqref{e:split_2}.} 
	The first inequality in \eqref{e:split_2} follows from \eqref{e:Dir_regional}, taking into account that $L\in\sW_e$, so we only have to show the second inequality.
	By \cite[(5.4)]{DSsns}, for every $p\in\Omega$ (we write $p=\Phii_{p_H}(x)$, with $x\defeq \p_{p_H+\pi_H}(p)$),
	\begin{align*}
		2\sqrt{Q}|N'|(p)&=2\sqrt{Q}|N'|(\Phii_{p_H}(x))\ge\cG(f(x),Q\a{\bPsi_{p_H}(x)})
		\\&\ge \cG(f(x),Q\a{0})-\cG(Q\a{0},Q\a{\bPsi_{p_H}(x)})\\&\ge
		|f(x)|-C_0\bmo^{\sfrac{1}{2}}(4\rho)^2,
	\end{align*}
	so that 
	\begin{equation*}
		|f(x)|^2\le C_0 |N'|^2(\Phii_{p_H}(x))+ C_0\bmo M_0^4\ell(L)^4\quad\text{for every $x\in\p_{p_H+\pi_H}(\Omega)$.}
	\end{equation*}
	Therefore, recalling \eqref{ffdadc},  by the area formula and \eqref{dsaacs2},
	\begin{align*}
		\sfrac{\omega_m}{4}\bar \gamma C_e\bmo\ell(L)^{m+2-2\delta_2}&\le 	\int_{B_{\sfrac{\ell(L)}{16}} (q',\pi_H)}|f|^2\le \int_{\p_{p_H+\pi_H}(\Omega)}|f|^2
		\\&\le C_0\int_{\Omega}|N'|^2+C_0\bmo M_0^4\ell(L)^4\omega_m \ell(L)^m
		\\&\le C_0\int_{\Omega}|N'|^2+\sfrac{\omega_m}{4}\bar\gamma C_e \bmo\ell(L)^{m+2-2\delta_2}\big(C_0\bar\gamma^{-1} M_0^4\ell(L)^{2+2\delta_2}\big).
	\end{align*}
	Recall that $\bar \gamma$ depends only on a geometric quantity and on $M_0$, so that we can take $N_0\ge C(M_0)$ and deduce from the above chain of inequalities that 
	\begin{equation}\label{asccsa}
		\sfrac{\omega_m}{8}\bar \gamma C_e\bmo\ell(L)^{m+2-2\delta_2}\le C_0\int_{\Omega}|N'|^2.
	\end{equation}

 Now recall \eqref{ffdadc} with \eqref{ffdadc0}, hence, as in \eqref{cdscasa1}, we have that
 \begin{equation}\notag
     N(p)=N'(p)\quad\text{for every }p\in \Omega\cap\cK\setminus\p(\mathscr{D}(J)),
 \end{equation}
 so that, by \eqref{asasa} and \eqref{ldsani},
 \begin{equation}\label{asccsa1}
		\mathcal{H}^m(\{p\in\Omega:N(p)\ne N'(p)\})\le C_1\bmo^{1+\gamma_2}\ell(L)^{m+2+\gamma_2}.
	\end{equation}

	Now, we use \eqref{eq2}, we combine \eqref{asccsa} and \eqref{asccsa1} (with $|N'|\le C_0$ on $\Omega$ -- for example, recall \eqref{rafdsa}), and we use again $C_e\bmo \ell(L)^{2-2\delta_2}\le \bE(T,\bC_L,\pi_L)$ to estimate
	\begin{align*}
		\ell(L)^m\bE(T,\bC_L,\pi_L)&\le C_1\bmo\ell(L)^{m+2-\delta_2}\le
		C_1\sfrac{\omega_m}{8}\bar \gamma C_e\bmo\ell(L)^{m+2-2\delta_2}\\&\le
		C_1\int_{\Omega}|N|^2+C_1\bmo^{1+\gamma_2}\ell(L)^{m+2+\gamma_2}\\&\le 
		C_1\int_{\Omega}|N|^2+C_1\bmo^{\gamma_2} \ell(L)^m\bE(T,\bC_L,\pi_L).
	\end{align*}
	Therefore, the second inequality in \eqref{e:split_2} follows, provided that $\eps_2$ is small enough.
\end{proof}
\subsection{Proof of the results of Section \ref{sectcomp}}
\begin{proof}[Proof of Proposition \ref{comp1}]
	Let $J\in\sS$ be the ancestor of $L$ with $\ell(J)=2^5\ell(L)$. Then $|x_J|\le \sqrt{m}\ell(J)+c_s^{-1}\ell(J)\le 128\sqrt{m}\ell(J)\le M_0^{-1}128r_J\le 2r_J$, as $M_0\ge 64$, so that $|p_J|\le \sqrt{1+C_0\bmo}2r_J$. In particular, if $\eps_0$ is smaller than a geometric constant, $0\in\bC_{3r_J}(p_J,\pi_J)$. Moreover $E\defeq \bE(T,\bC_J,\pi_J)\le C_e \bmo\ell(J)^{2-2\delta_2}$, so that, we can argue as at the beginning of the proof of Proposition \ref{p:splitting}, building upon Lemma \ref{giraerigira} and Lemma \ref{heighapp}, to deduce that there exist $y_1,\dots,y_Q\in \pi_J^\perp$ such that $$\supp(T)\cap \bC_{4 r_J}(p_J,\pi_J)\subseteq\bigcup_{i=1}^Q( \pi_J\times B_{ C_0 r_J E^{\sfrac{1}{2}}}(y_i,\pi_J^\perp)).$$
	We can assume (up to  enlarging $C_0$) that $\supp(T)\cap \bC_{4r_J}(p_J,\pi_J)\subseteq\bigcup_{i=1}^q\bS_i$, where $1\le q\le Q$ and $\bS_i=\pi_J\times A_i$, for some $A_i\subseteq\pi_J^\perp$  with $$\diam(A_i)\le C_0 r_JE^{\sfrac{1}{2}}\le C_0r_J(C_e\bmo\ell(J)^{2-2\delta_2})^{\sfrac{1}{2}}$$  and such that $(A_i)_{i=1,\dots,q}$ are open and pairwise disjoint. By the smallness of $E$, we can moreover assume that $\p_{\pi_J}(T\mres (\bS_i\cap\bC_{4r_J}(p_J,\pi_J)))=Q_i\a{B_{4r_J}(p_J,\pi_J)}$, for integers $ Q_i\ge 1$, with $\sum_{i=1}^qQ_i=Q$. By Assumption \ref{ipotesi}, $0\in\bC_{3r_J}(p_J,\pi_J )$ is a point of density $Q$ for $T$. This easily implies that we can take $q=1$ (see the beginning of the proof of \cite[Lemma A.2]{DS2}), in particular,
	\begin{equation}\label{lcsa}
		\bs(T,\bC_{4r_J}(p_j,\pi_J),\pi_J)\le C_0C_e^{\sfrac{1}{2}}M_0\bmo^{\sfrac{1}{2}}\ell(J)^{2-\delta_2}.
	\end{equation}
	Now, we can slightly improve the proof of item $\rm(iv)$ of Proposition \ref{prop:tilting} to obtain that $\supp(T)\cap\bC_L\subseteq \bC_{4r_J}(p_J,\pi_J)$, we just give the main computation: take $z\in\supp(T)\cap \bC_L$,
	\begin{align*}
		|\p_{\pi_J}(z-p_J)|&\le |p_L-p_J|+ 	|\p_{\pi_J}-\p_{\pi_L}||z-p_L|+|\p_{\pi_L}(z-p_L)|
		\\&\le 2\sqrt{m} \ell(J)+C_0\bmo^{\sfrac{1}{2}}\ell(J)+64 r_L\le 128r_L
		=4 r_J,
	\end{align*}
	provided that $\eps_0$ is smaller than a geometric constant.
	Therefore  we take $z\in\supp(T)\cap\bC_L \subseteq\bC_{4r_J}(p_J,\pi_J)$ and we compute, recalling item $\rm(ii)$ of Proposition \ref{prop:tilting}, \eqref{eq4},\eqref{lcsa}, and the estimate on $\|\bPsi_{p_J}\|_{C^2}$ (together with $0\in\cM$ and the fact that $|\p_{\pi_J^\perp}(p_J)|=|\p_{p_J+\pi_J}(0)|$),
	\begin{align*}
		|\p_{\pi_L^\perp}(z-p_L)|&\le 	|\p_{\pi_L^\perp}-\p_{\pi_J^\perp}||z-p_L|+	|\p_{\pi_J^\perp}(z)|+|\p_{\pi_J^\perp}(p_J)|+|\p_{\pi_J^\perp}(p_L-p_J)|
		\\&\le C_0\bmo^{\sfrac{1}{2}}\ell(L)128r_L+C_0C_e^{\sfrac{1}{2}}M_0\bmo^{\sfrac{1}{2}}\ell(J)^{2-\delta_2}+C_0M_0^2\bmo^{\sfrac{1}{2}}\ell(L)^2+C_0\bmo^{\sfrac{1}{2}}\ell(L)^2.
	\end{align*}
	Therefore, if $C_h\ge C(M_0,C_e)$, $L\notin \sW_h$. Also, as in the proof of \cite[Proposition 3.7]{DS2}, $L\notin \sW_n$. Indeed, if it was not the case, there would be $L'\in\sW$ with $\ell(L')=2\ell(L)$ and $L'\cap L\ne \emptyset$, so that, by item $\rm(c)$, $$\dist(0,L')\le \dist(0,L)+\sqrt{m}\ell(L)\le c_s^{-1}\ell(L)+\sqrt{m}\ell(L)\le c_s^{-1}\ell(L'),$$ but $c_s^{-1}\ell(L')=2c_s^{-1}\ell(L)=2s>s$, contradicting item $\rm(a)$. Hence $L\in\sW_e$. 
	
	It remains to prove \eqref{comp1eq}.  We use Proposition \ref{p:splitting}, combining \eqref{e:split_1} with \eqref{e:split_2} to obtain 
	\begin{equation}\label{vfdaa1}
		C_e\bmo\ell(L)^{m+4-2\delta_2}\le C\int_\Omega|N|^2.
	\end{equation}
	
	As in \textbf{Step 3}  of  the proof  of Proposition \ref{p:splitting}, we have that every Whitney region $\cL'$ that intersects $\Omega$ corresponds to $L'$ with $\ell(L')\le C_0\ell(L)$. In particular, \eqref{asasa}, \eqref{klcsakc} and \eqref{klcsakc1} still hold. 
	
	Recalling \cite[Lemma 1.9]{DSsns} (see also the brief discussion after \cite[Definition 1.10]{DSsns}) and \eqref{klcsakc1},
	\begin{equation}\label{vfdaa2}
		\int_{\p^{-1}(\Omega)}\dist^2(x,\cM)\dd\|\bT_F\|(x)\ge \sfrac{1}{2}\int_{\Omega}\sum_{j}\dist^2(F^j,\cM)= \sfrac{1}{2}\int_{\Omega}|N|^2
	\end{equation}
	provided that $\eps_0$ is small enough (depending upon all other parameters).
	Notice also that for every $q\in\p^{-1}(p)\cap\Im(F)$,  by \eqref{klcsakc}
	\begin{align*}
		\dist^2(q,\cM)\le |N(\p(q))|^2\le C_1\bmo\ell(L)^{2+2\beta_2},
	\end{align*}
	so that, using  \eqref{asasa},
	\begin{equation}\label{vfdaa3}
		\int_{\p^{-1}(\Omega)}\dist^2(x,\cM)\dd\|\bT_F\|(x)\le 	\int_{\p^{-1}(\Omega)}\dist^2(x,\cM)\dd\|T\|(x)+C_1\bmo^{2+\gamma_2}\ell(L)^{m+4+2\beta_2+\gamma_2}.
	\end{equation}
	Combining \eqref{vfdaa1}, \eqref{vfdaa2} and \eqref{vfdaa3}, 
	\begin{align*}
		C_e\bmo\ell(L)^{m+4-2\delta_2}\le  C_1\int_{\p^{-1}(\Omega)}\dist^2(x,\cM)\dd\|T\|(x)+C_1\bmo^{2+\gamma_2}\ell(L)^{m+4+2\beta_2+\gamma_2},
	\end{align*}
	which implies \eqref{comp1eq}, provided that $\eps_0$ is small enough.
\end{proof}
\begin{proof}[Proof of Proposition \ref{comp2}]
	Set $\bar\ell\defeq \sup_{L\in\sW:L\cap (B_{\sfrac{11}{4}}\setminus B_{\sfrac{9}{4}})\ne\emptyset}\ell(L)$
	Recalling item $\rm(iii)$ of Corollary \ref{c:cover} and \eqref{vfcdsa}, it is enough to prove that
	\begin{equation}\notag
		\sum_{L\in\sW:L\cap (B_{\sfrac{11}{4}}\setminus B_{\sfrac{9}{4}})\ne\emptyset}\int_{\p^{-1}(\Phii(L))}\dist^2(x,\cM)\dd\|T\|(x)\le C_1\bigg(\bmo^{1+\gamma_2}\bar\ell+\int_{{\cB_{\sfrac{21}{8}}\setminus \cB_{\sfrac{19}{8}}}}|N|^2\bigg).
	\end{equation}
	Fix $L\in\sW$ with  $L\cap (B_{\sfrac{11}{4}}\setminus B_{\sfrac{9}{4}})\ne\emptyset$.
	By item $\rm(ii)$ of Corollary \ref{c:cover}, for every $x\in\supp(T)\cap\p^{-1}(\Phii(L))$, $\dist(x,\cM)\le C_1\bmo^{\sfrac{1}{2}}\ell(L)^{1+\beta_2}$. Notice now that if $\cL$ is the Whitney region associated to $L$, then $\cL\supseteq\Phii(L)$, so that we can use \eqref{e:err_regional} and obtain, recalling \cite[Lemma 1.9]{DSsns} (with \eqref{e:Lip_regional}),
	\begin{align*}
		\int_{\p^{-1}(\Phii(L))}\dist^2(x,\cM)\dd\|T\|(x)&\le \int_{\p^{-1}(\Phii(L))}\dist^2(x,\cM)\dd\|\bT_F\|(x) + C_1\bmo^{2+\gamma_2}\ell(L)^{m+4+2\beta_2+\gamma_2}\\&\le 
		2\int_{\Phii(L)} |N|^2+ C_1\bmo^{1+\gamma_2}\ell(L)^{m}\bar\ell.
	\end{align*}
	The claim follows by summing over all such $L$.
\end{proof}

\part{Logarithmic derivatives}\label{part2}
In this part we will define a modified frequency function and prove its monotonicity and boundedness in certain ``intervals of flattening''. Those intervals correspond to the scales at which the normal approximation $N$ we constructed in Part \ref{sect:normapprox} is a good approximation of $T$. We will also prove differential inequalities to compare the different integral quantities that make up the frequency. As mentioned in the Introduction, in this Part we follow quite closely the last paper of the De Lellis and Spadaro program, \cite{DS3}. In particular, Section \ref{sectif} corresponds to Section 2 there, Section \ref{freq} to Section 3 and Section \ref{freqbound} to Section 5. In keeping with Part \ref{sect:normapprox}, we isolated the proofs of all statements in Section \ref{prooffreq}, while in \cite{DS3} they are contained in the respective sections. 

\section{Intervals of flattening}\label{sectif}
\textbf{The standing assumption, for this part, is the following:}
\begin{ass}\label{ass:sectfour}
	$T$ and $\Sigma$ are as in Assumption \ref{ipotesi}. Moreover, the choices for the parameters as in \ref{parametri} (in particular, the choice of $\eps_0$) are so that all the results of Section \ref{sect:normapprox} hold. In addition, we require $\cM$ to be a minimal surface, in the sense that the mean curvature $H_\cM$ vanishes.
	\fr
\end{ass}

We assume that  the parameters as in \ref{parametri} \emph{except} $\eps_0$ are fixed at this point. We recall our convention to denote two particular types of constants in \eqref{constants}.

\bigskip

Here and after, we consider the rescaling map $\iota_{0,r}$ defined as $z\mapsto \sfrac{z}{r}$.
Define
\begin{equation}\notag
	\mathcal{R}\defeq\big\{r\in ]0,1]: \bE (T, \B_{6\sqrt{m} r},\pi_0) \le \eps_0^2\big\}.
\end{equation}
Observe that, if $(s_k)_k\subseteq \mathcal{R}$
and $s_k\nearrow s$, then $s\in \mathcal{R}$.
We cover $\cR$ with a collection $\mathcal{F}=\{I_j\}_j$ of intervals
$I_j = ]s_j, t_j]$ defined as follows.
$t_0\defeq \max \{t: t\in \mathcal{R}\}$. Next assume, by induction, to have defined $t_j$ (and hence also
$t_0 > s_0\ge t_1 > s_1 \ge \ldots > s_{j-1}\ge t_j$) and consider $$T_j^0 \defeq ((\iota_{0,t_{j}})_\sharp T)\res \B_{6\sqrt{m}}\quad\text{and}\quad\cM_j \defeq \iota_{0, t_j} (\cM).$$
Notice that $T_j$, $\cM_j$ still satisfy Assumption \ref{ass:sectfour}, with 	$\bmo^{(j)} \defeq \max\{\mathbf{c}(\cM_j)^2 , \bE(T_j, \bB_{6\sqrt{m}},\pi_0)\}$, we are going to use the obvious notation for what concerns e.g.\ $\bPsi^{(j)}$. We let $T_j$ denote the current obtained by Lemma \ref{height}, from the current $T_j^0$.

Then, we consider the Whitney decomposition $\sW^{(j)}$ of $[-4,4]^m \subseteq \pi_0$ as in Proposition \ref{p:whitney} (applied to $T_j$, $\cM_j$) and we define
	\begin{equation}\label{e:s_j}
		s_j \defeq t_j \max \left(\{c_s^{-1} \ell (L) : L\in \sW^{(j)} \mbox{ and } c_s^{-1} \ell (L) \ge \dist (0, L)\} \cup \{0\} \right) .
	\end{equation} 
	We will prove below that $\sfrac{s_j}{t_j} <\sfrac{2^{-5}}{\sqrt{m}}$. In particular this ensures that $[s_j, t_j]$ is a (non-trivial) interval. 
	Next, if $s_j =0$ we stop the induction. Otherwise we let $t_{j+1}$ be the largest element
	in $\mathcal{R}\cap ]0, s_j]$ and proceed as above. Note moreover the following simple consequence of \eqref{e:s_j}:
	\begin{itemize}
		\item[(Stop)] If $s_j >0$ and $\bar{r} \defeq \sfrac{s_j}{t_j}$, then there is $L\in \sW^{(j)}$ with 
		\begin{equation}\notag
			\ell(L) = c_s\bar r \qquad \mbox{and} \qquad L\cap \bar B_{\bar r} (0, \pi_0)\neq \emptyset.
		\end{equation}
		\item[(Go)] If $\rho > \bar{r} \defeq \sfrac{s_j}{t_j}$, then
		\begin{equation}\notag
			\ell (L) < c_s \rho \qquad \mbox{for all } L\in \sW^{j(k)} \mbox{ with $L\cap B_\rho (0, \pi_0) \neq \emptyset$.}
		\end{equation}
		In particular the latter inequality is true for every $\rho\in ]0,3]$ if $s_j =0$.
\end{itemize}

The next proposition lists some quick consequences of the definition. It is for the most part completely analogous to Proposition 2.2 in \cite{DS3}, but note that in point (iv) we keep track of the $j$ dependence with $\bmo^{(j)}$ -- we will need this in Proposition \ref{compH} below, to control $\bmo^{(j)}$ with something at the preceding scale. Given two sets $A$ and $B$, we define their
{\em separation} as the number ${\sep} (A,B) \defeq \inf \{|x-y|:x\in A, y\in B\}$.

\begin{prop}\label{p:flattening} For some constant $C_1$,
	\begin{enumerate}[label=\rm(\roman*)]
		\item $s_j < \sfrac{t_j2^{-5}}{\sqrt{m}}$ and the family $\mathcal{F}$ is either countable
		and $t_j\downarrow 0$, or finite and $I_j = ]0, t_j]$ for the largest $j$;
		\item the union of the intervals of $\cF$ cover $\cR$;
		\item if $r \in ]\sfrac{s_j}{t_j},3[$ and $J\in \mathscr{W}^{(j)}_n$ intersects
		$B\defeq \p_{\pi_0} (\cB_r )$,
		then $J$ is in the
		domain of influence $\mathscr{W}_n^{(j)} (H)$ (see \cite[Definition 3.3]{DS2})
		of a cube $H\in \mathscr{W}^{(j)}_e$ with
		\[
		\ell (H)\le 3  c_s r \quad \text{and}\quad 
		\max\left\{{\sep} (H, B),  {\sep} (H, J)\right\}
		\le 3\sqrt{m} \ell (H) \le \sfrac{3 r}{16};
		\]
		\item  for
		every $r\in]\sfrac{s_j}{t_j},3[$, $$\bE (T_j, \B_r ,\pi_0)\le C_1  \bmo^{(j)} r^{2-2\delta_2};$$
		\item  for
		every $r\in]\sfrac{s_j}{t_j},3[$, $$\sup \{ \dist (x,\cM_j): x\in \supp(T_j) \cap \p^{-1}_j(\cB_r)\} \le C_0 (\bmo^{(j)})^{\sfrac{1}{2}} r^{1+\beta_2}.$$ 

	\end{enumerate}
\end{prop}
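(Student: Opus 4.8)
The plan is to treat the five assertions separately. Items (i)--(iii) are combinatorial facts about the Whitney decomposition of Section~\ref{sectwh} and the recursion defining the $I_j$, while (iv)--(v) are the quantitative decay statements. Throughout, the parameters of Assumption~\ref{parametri} are fixed, so that every $T_j,\cM_j$ satisfies Assumption~\ref{ipotesi} and Assumption~\ref{parametri} with the \emph{same} constants; in particular all the estimates of Part~\ref{sect:normapprox} apply to each $T_j$ with $j$-independent constants. The whole statement is the analogue of \cite[Proposition~2.2]{DS3}, the only real novelty being to keep track of the $j$-dependence through $\bmo^{(j)}$ in (iv).

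\emph{Items (i) and (ii).} The bound $s_j<t_j2^{-5}/\sqrt m$ is immediate: by \eqref{e:prima_parte} every $L\in\sW^{(j)}$ has $\ell(L)\le 2^{-N_0-6}$, so each value $c_s^{-1}\ell(L)$ entering the maximum in \eqref{e:s_j} is $\le c_s^{-1}2^{-N_0-6}<2^{-6}/\sqrt m$ by \eqref{fdasc}. This makes $]s_j,t_j]$ a genuine interval and, with $\bar r\defeq s_j/t_j$, yields at once (Stop) (take the maximizing cube) and (Go): if $L\in\sW^{(j)}$ meets $B_\rho(0,\pi_0)$ with $\rho>\bar r$ and $\ell(L)\ge c_s\rho$, then $c_s^{-1}\ell(L)\ge\rho>\dist(0,L)$, so $L$ enters the maximum and $c_s^{-1}\ell(L)\le\bar r<\rho$, a contradiction. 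For the dichotomy, $t_{j+1}\le s_j<t_j/32$, so the recursion either runs forever with $t_j\le 32^{-j}t_0\downarrow0$, or stops because some $s_j=0$, i.e.\ $I_j=]0,t_j]$. The well-definedness of $t_{j+1}$ when $s_j>0$ uses that $\mathcal R$ accumulates at $0$, which holds because under the standing assumptions the tangent cone to $T$ at $0$ is $Q\a{\pi_0}$ (cf.\ the reduction around \eqref{bgrvfd}), hence $\bE(T,\B_{6\sqrt m r},\pi_0)\to0$ as $r\to0$. Finally (ii): by maximality of $t_{j+1}$ the ``gaps'' $]t_{j+1},s_j]$ do not meet $\mathcal R$, and since $\mathcal R\subseteq]0,t_0]$ and $]0,t_0]$ is the disjoint union of the $]s_j,t_j]$ and the $]t_{j+1},s_j]$, every point of $\mathcal R$ lies in some $I_j$.

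\emph{Item (iii).} This is proved exactly as in \cite[Proposition~2.2]{DS3}. Given $J\in\sW_n^{(j)}$ of side-length $\ell$, rule (NN) forces $J$ to intersect a cube of $\sW^{(j)}$ of side-length $2\ell$; by (S2) of Proposition~\ref{p:separ} that cube cannot lie in $\sW_h$, so it lies in $\sW_e\cup\sW_n$. Iterating (side-lengths double and are bounded by $2^{-N_0-6}$), the chain terminates at some $H\in\sW_e^{(j)}$, which identifies the domain of influence $\sW_n^{(j)}(H)\ni J$. Summing the side-lengths along the chain gives $\sep(H,J)\le3\sqrt m\,\ell(H)$; that $J$ meets $B=\p_{\pi_0}(\cB_r)$, together with (Go) applied along the chain, yields $\ell(H)\le3c_sr$ and $\sep(H,B)\le3\sqrt m\,\ell(H)\le3r/16$.

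\emph{Items (iv) and (v).} Fix $r\in]s_j/t_j,3[$. By Lemma~\ref{height} and Corollary~\ref{c:cover} applied to $T_j$, every point of $\supp(T_j)\cap\B_r$ lies over $\bGam$ (where, by Corollary~\ref{c:cover}(iii), $\supp(T_j)\subseteq\cM_j$, so $\vec T_j=\vec\cM_j$ $\|T_j\|$-a.e.) or inside a cylinder $\bC_L$ with $L\in\sW^{(j)}$ meeting $B_{2r}(0,\pi_0)$; by (Go) such $L$ have $\ell(L)<2c_sr$ and $|x_L|\le C_0r$, and they have pairwise disjoint interiors with $\sum_L\ell(L)^m\le C_0r^m$. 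Hence
\[
\int_{\B_r}|\vec T_j-\vec\pi_0|^2\,\dd\|T_j\|\;\le\;\sum_L\int_{\bC_L}|\vec T_j-\vec\pi_0|^2\,\dd\|T_j\|\;+\;\int_{\p_j^{-1}(\Phii^{(j)}(\bGam))\cap\B_r}|\vec\cM_j-\vec\pi_0|^2\,\dd\|T_j\|.
\]
On the second integral $|\vec\cM_j-\vec\pi_0|^2\le C_0\bmo^{(j)}r^2$ (since $D\bPsi^{(j)}(0)=0$ and $\|\bPsi^{(j)}\|_{C^2}\le(\bmo^{(j)})^{1/2}$), over a set of $\|T_j\|$-measure $\le C_0r^m$, so this term is $\le C_0\bmo^{(j)}r^{m+2}$. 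On each cube, $|\vec T_j-\vec\pi_0|^2\le 2|\vec T_j-\vec\pi_L|^2+2|\vec\pi_L-\vec\pi_0|^2$: the first integrates to $\le C_1\bmo^{(j)}\ell(L)^{m+2-2\delta_2}$ by \eqref{eq2}, while $|\vec\pi_L-\vec\pi_0|^2\le C_0\bmo^{(j)}|x_L|^2\le C_0\bmo^{(j)}r^2$, so by \eqref{eq3} the second integrates to $\le C(M_0)\bmo^{(j)}r^2\ell(L)^m$. Summing over $L$ and using $\ell(L)<2c_sr$ and $r<3$ gives $\int_{\B_r}|\vec T_j-\vec\pi_0|^2\,\dd\|T_j\|\le C_1\bmo^{(j)}r^{m+2-2\delta_2}$, which after dividing by $2\omega_mr^m$ is (iv), with $C_1$ depending only on the parameters (not on $j$, the Whitney estimates of Part~\ref{sect:normapprox} being uniform over the $T_j$). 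For (v), combine (iv) with the improved height bound of \cite[Theorem~1.5]{SpolAlm} (controlling $\dist(x,\pi_0)$ at scale $r$ by a constant times $\bE(T_j,\B_{2r},\pi_0)^{1/2}r$) and then pass from $\pi_0$ to $\cM_j$ by adding $|\bPsi^{(j)}(\p_{\pi_0}(x))|\le C_0(\bmo^{(j)})^{1/2}r^2$; the exponents match since $1+\beta_2=1+4\delta_2\le 2-\delta_2$ by \eqref{freavfad} (alternatively, (v) follows directly from Corollary~\ref{c:cover}(ii)--(iii) and (Go), every relevant cube having $\ell(L)<c_sr$). The only genuine, if mild, obstacle is item (iv): one must reproduce the excess-decay estimate of \cite[Proposition~2.2]{DS3} while checking both that $C_1$ does not degenerate as $j\to\infty$ and that the change-of-plane term $|\vec\pi_L-\vec\pi_0|^2$ can be bounded by $C_0\bmo^{(j)}r^2$ instead of the cruder $C_0\bmo^{(j)}$ — this last point is exactly where the flatness $D\bPsi^{(j)}(0)=0$ of $\cM_j$ at the origin enters and is what preserves the power $r^{2-2\delta_2}$.
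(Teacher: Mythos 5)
Your treatment of items (i)--(iii) is essentially the same as the paper's: for (i) you give the same short computation using \eqref{e:prima_parte} and \eqref{fdasc}, and for (ii)--(iii) both you and the paper simply defer to the argument in \cite[Proposition~2.2]{DS3} (the paper says ``verbatim''). You also correctly identify that the only real novelty in the proposition is item (iv): keeping the $j$-dependence explicit through $\bmo^{(j)}$ and preserving the power $r^{2-2\delta_2}$, which requires $D\bPsi^{(j)}(0)=0$ to turn the change-of-plane term into $C_0\bmo^{(j)}r^2$ rather than $C_0\bmo^{(j)}$.

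For (iv) your route is genuinely different from the paper's. The paper (for $r<2^{-N_0}$) picks a \emph{single} cube $L\in\sC^k$ with $0\in L$ and $\ell(L)$ comparable to $r$; by (Go) this $L$ and all its ancestors fail to be in $\sW^{(j)}$, so $L\in\sS^{(j)}$, and then $\B_r\subseteq\bC_L^{(j)}$, so the excess in $\B_r$ is bounded by $C_1\,\bE(T_j,\bC_L^{(j)},\pi_L^{(j)})+C_1 r_L^{-m}|\vec\pi_0-\vec\pi_L^{(j)}|^2\|T_j\|(\bC_L^{(j)})\le C_1\bmo^{(j)}\ell(L)^{2-2\delta_2}+C_1\bmo^{(j)}r^2$; for $r\ge 2^{-N_0}$ it invokes the \cite{DS3} argument. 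You instead cover $\supp(T_j)\cap\B_r$ by $\p_j$-fibers over $\bGam$ (where $T_j$ sits on $\cM_j$) together with the tilted cylinders $\bC_L$ over Whitney cubes, and sum the per-cube contributions from \eqref{eq2}--\eqref{eq3}. This does ultimately give the same bound, and the arithmetic you carry out (exponents, $\sum\ell(L)^m\le C_0 r^m$, etc.) is correct. However, the paper's single-cube argument is considerably cleaner, and two of the steps you state without proof deserve flagging. First, the assertion that every relevant $L$ \emph{meets} $B_{2r}$ is not a direct projection fact (a point $x\in\supp(T_j)\cap\B_r$ need not satisfy $\p_{\pi_0}(x)\in L\Rightarrow x\in\bC_L^{(j)}$, since $\bC_L^{(j)}$ is a tilted cylinder): one must pass through the $\p_j$-fibers via Corollary~\ref{c:cover}(ii) and then combine the fiber bound $|x-\p_j(x)|\le C_1\bmo^{\sfrac12}\ell(L)^{1+\beta_2}$ with (Go) applied at $\rho\approx\dist(0,L)$ to rule out large, far cubes. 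Second, the containment $\supp(T_j)\cap\p_j^{-1}(\cL)\subseteq\bC_L$ is established inside the proof of Theorem~\ref{t:approx}, not in a stated lemma, so citing it explicitly would be cleaner. Finally, a minor point on (v): both of your proposed arguments (via the height bound, or via Corollary~\ref{c:cover}(ii)--(iii) with (Go)) produce the constant $C_1$ rather than the stated geometric constant $C_0$; this discrepancy is harmless for the rest of the paper, but worth noting.
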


\section{Frequency function and first variations}\label{s:frequency}\label{freq}

For every interval of flattening $I_j = ] s_j, t_j]$,
let $N_j$ be the normal approximation of $T_j$
on $\cM_j$ in Theorem \ref{t:approx}. 
For convenience of notation, in this subsection, \textbf{we occasionally suppress the index $j$ when it is clear from the context.}

Consider the following Lipschitz (piecewise linear) function $\phi:[0+\infty[ \to [0,1]$ given by
\begin{equation*}
	\phi (r) \defeq
	\begin{cases}
		1 & \text{for } r\in [0,\textstyle{\sfrac{1}{2}}],\\
		2-2r & \text{for } r\in  ]\textstyle{\sfrac{1}{2}},1],\\
		0 & \text{for } r\in  ]1,+\infty[.
	\end{cases}
\end{equation*}

\begin{defn}[Frequency functions]
	For every $r\in ]0,3]$ we define: 
	\[
	\bD (r) \defeq \int_{\cM^j} \phi\left(\frac{d(p)}{r}
	\right)|D N|^2(p) \dd\mathcal{H}^m(p) \quad\text{and}\quad
	\bH (r) \defeq - \int_{\cM^j} \phi'\left(\frac{d (p)}{r}\right)\frac{|N|^2(p)}{d(p)} \dd\mathcal{H}^m(p) ,
	\]
	where $d (p)$ is the geodesic distance on $\cM$ between $p$ and $0=\Phii (0)$.
	If $\bH (r) > 0$, we define the {\em frequency function}
	 as $$\bI (r) \defeq \frac{r\bD (r)}{\bH (r)}.$$
\end{defn}

\begin{defn}
	We let $\de_{\hat r}$ denote the derivative with respect to arclength along geodesics starting at $0$. We set
	\begin{equation*}
				\bE (r) \defeq - \int_\cM \phi'\left({\frac{d(p)}{r}}\right)\sum_{i=1}^Q \langle
		N_i(p), \de_{\hat r} N_i (p)\rangle \dd\mathcal{H}^m(p),
	\end{equation*}
\begin{equation*}
			\bG (r) \defeq - \int_{\cM} \phi'\left({\frac{d(p)}{r}}\right)\dd\mathcal{H}^m(p) \left|\de_{\hat r} N (p)\right|^2 \dd\mathcal{H}^m(p)
		\quad\text{and}\quad
		\bSigma (r) \defeq\int_\cM \phi\left({\frac{d(p)}{r}}\right) |N|^2(p) \dd\mathcal{H}^m(p).
\end{equation*}

\end{defn}

\begin{rem}Observe that all these functions of $r$ are absolutely continuous
	and, therefore, classically differentiable at almost every $r$.
	Moreover, the following rough estimates easily follows from
	Theorem \ref{t:approx} and the condition (Go):
	\begin{alignat}{2}\label{e:rough}
		\bD(r) &\le \  \int_{\cB_r} |DN|^2 &\le C_1 \bmo r^{m+2-2\delta_2} \quad \text{for every}\quad r\in\left]{\sfrac{s}{t}},3\right[,\\
		\bH(r)& \le  r^{-1}\int_{\cB_r } |N|^2& \le  C_1 \bmo r^{m+1+2\beta_2} \quad \text{for every}\quad r\in\left]{\sfrac{s}{t}},3\right[,\label{N:rough}
	\end{alignat}
	Indeed, since $N$ vanishes identically on the set $\mathcal{K}$ of Theorem \ref{t:approx}, it suffices to sum
	the estimate \eqref{e:Dir_regional} over all the different cubes $L$ (of the corresponding Whitney decomposition) for which $\Phii (L)$ intersects the geodesic ball $\cB_r$.\fr 
\end{rem}

\begin{thm}[Almost monotonicity of the frequency]\label{t:frequency}
	Assume that $\eps_0$ is small enough (depending upon all other parameters). Then, for every
	$[a,b]\subseteq [\sfrac{s_j}{t_j}, 3]$ with $\bH_j \vert_{[a,b]} >0$, 
	\begin{equation}\notag
		\bI_j (a) \le C_1 (1 + \bI_j (b)).
	\end{equation}
\end{thm}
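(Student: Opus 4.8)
The plan is to follow the classical Garofalo--Lin computation for the frequency function, carried out for the multivalued normal approximation $N_j$ on the minimal surface $\cM_j$, but keeping careful track of the error terms coming from the fact that $N$ is only an approximation of $T$ and from the curvature of $\cM_j$. First I would record the key first-variation identities: differentiating $\bH(r)$ one obtains, up to controlled errors, $\bH'(r) = \frac{m-1}{r}\bH(r) + \frac{2}{r}\bE(r) + (\text{error})$, and a Rellich-type (outer variation) identity gives $\bD(r) = \bE(r) + (\text{error})$ while an inner variation identity relates $\bD'(r)$ to $\frac{m-2}{r}\bD(r) + \frac{2}{r}\bG(r)$ plus errors. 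These are exactly the analogues of the identities in \cite{DS3} (their Section 3), and I would cite the corresponding statements in Section \ref{freq}/\ref{prooffreq} of the present paper (the propositions producing $\bH'$, $\bD'$, and the comparison $\bD\approx\bE$). The errors are all of the form $C_1\bmo^{\gamma}(\cdots)$ times lower-order quantities, hence absorbable for $\eps_0$ small.

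Next, assuming $\bH_j|_{[a,b]}>0$ so that $\bI_j = r\bD/\bH$ is well-defined on $[a,b]$, I would compute the logarithmic derivative
\[
\frac{\bI_j'}{\bI_j} = \frac{1}{r} + \frac{\bD'}{\bD} - \frac{\bH'}{\bH}.
\]
Plugging in the first-variation identities, the main terms combine (Cauchy--Schwarz: $\bE(r)^2 \le \bG(r)\,\bH(r)$) to give a nonnegative contribution, exactly as in the harmonic case, and what remains is an error term. The crucial structural point is that the error can be bounded by something like $C_1\bmo^{\gamma_3}r^{-1+\gamma_3}(1+\bI_j(r))$ or, after using the rough estimates \eqref{e:rough}, \eqref{N:rough} and Proposition \ref{p:flattening}(iv)--(v), by an integrable-in-$\log r$ quantity times $(1+\bI_j(r))$. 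Concretely one aims at a differential inequality of the shape
\[
\partial_r \log(1 + \bI_j(r)) \ge -C_1\,\Upsilon(r), \qquad \int_a^b \Upsilon(r)\,dr \le C_1,
\]
which upon integration over $[a,b]\subseteq[\sfrac{s_j}{t_j},3]$ yields $\log(1+\bI_j(a)) \le \log(1+\bI_j(b)) + C_1$, i.e. $\bI_j(a) \le C_1(1+\bI_j(b))$. (One has to be slightly careful when $\bH$ vanishes at interior points of $[\sfrac{s_j}{t_j},3]$, but under the hypothesis $\bH_j|_{[a,b]}>0$ the quotient is smooth a.e. on $[a,b]$ and the argument runs unobstructed on that interval.)

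The main obstacle, and the point where the most care is needed, is controlling the error terms in the first-variation identities uniformly in the scale and showing they are genuinely integrable against $dr/r$ (or $dr$) on the whole interval of flattening --- this is where the quantitative Whitney estimates \eqref{e:Dir_regional}, \eqref{e:err_regional}, the tilting estimates of Proposition \ref{prop:tilting}, and especially the excess/distance decay of Proposition \ref{p:flattening}(iv)--(v) with the exponent $2-2\delta_2$ enter, and where the minimality of $\cM$ (so that $H_\cM = 0$, killing a first-order curvature term that would otherwise be fatal) is used. A secondary subtlety is that, unlike in \cite{DS3}, there is no control on the barycenter $|\eta\circ N|$, so any step in the De Lellis--Spadaro argument that uses such control must be replaced; I would check that the frequency computation itself does not need it (it should not, since the barycenter only entered their error estimates via the center-manifold construction, not the monotonicity). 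Modulo these checks, the inequality follows by the integration argument sketched above, with the constant $C_1$ depending on all parameters except $\eps_0$, as required.
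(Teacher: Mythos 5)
Your proposal is correct and follows essentially the same route as the paper: the paper's proof reduces exactly to plugging the first-variation estimates of Proposition~\ref{p:variation} into the Garofalo--Lin differentiation of $\log\bI_j$ as in \cite[Theorem~3.2]{DS3}, with the reduction to $\bI_j\ge 1$ (via Lemma~\ref{l:poincare'}) handling the $r\bH$ contributions in the errors, and with $H_{\cM}=0$ killing the barycenter term ${\rm Err}_1$ exactly as you note. One small imprecision: in \cite{DS3} the barycenter $|\etaa\circ N|$ does enter the frequency monotonicity computation (through ${\rm Err}_1^o,{\rm Err}_1^i$), but here those terms vanish identically because $\cM$ is minimal, which is why the missing barycenter estimate causes no trouble.
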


\begin{prop}[First variation estimates]\label{p:variation}
	For every $\gamma_3$ sufficiently small (depending upon all other parameters) there is a constant $ C=(C_1,\gamma_3)>0$  (in the sense that $C$ depends on $\gamma_3$ as well as the parameters as for $C_1$) such that, if $\eps_0$ is sufficiency small (depending upon a geometric quantity),  then for a.e.\ $r\in[\sfrac{s}{t},3]$
	\begin{eqnarray}
			&&\big|\bH' (r) - {\sfrac{(m-1)}{r}} \bH (r) - {\sfrac{2}{r}}\bE(r)\big|\le  C \bH (r), \label{e:H'}\\
		&&\big|\bD (r)  - \sfrac{1}{r} \bE (r)\big| \le C \big(\bD(r)+r\bH(r)\big)^{1+\gamma_3} + C \eps_0^2 \bSigma (r),\label{e:out}\\
&&\begin{aligned}[t]
			\big| \bD'(r) - {\sfrac{(m-2)}{r}} \bD(r) - {\sfrac{2}{r^2}}\bG (r)\big|&\le C (C_1,\gamma_3) \big(\bD (r)+r\bH(r)\big)^{\gamma_3} \big(\bD' (r) +\sfrac{1}{r} \bD(r)+\bH(r)\big)\\&\quad+C_1\big( \bD (r)+\bH(r)\big),\label{e:in}
\end{aligned}\\
		&&\bSigma (r) +r\bSigma'(r) \le C   r^2 \bD (r) +Cr\bH(r).\label{e:Sigma1}
	\end{eqnarray}
\end{prop}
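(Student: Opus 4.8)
The plan is to establish \eqref{e:H'}, \eqref{e:out}, \eqref{e:in}, \eqref{e:Sigma1} as first variation identities of Garofalo--Lin type, following \cite[Section 3]{DS3} with the modifications forced by the fact that here $\cM$ is an honest minimal surface, given rather than constructed. Fix $r\in[\sfrac{s}{t},3]$, write $N=N_j$ and $\cM=\cM_j$, and use the normal approximation $(\cK,F)$ of Theorem \ref{t:approx}. The three recurring ingredients are: $(\mathrm{a})$ $T$ is area minimizing in the flat space $\RR^{m+n}$, hence stationary, so $\delta T(X)=0$ for every $X\in C^1_c$ supported away from $\partial T$; $(\mathrm{b})$ replacing $T$ by $\bT_F$ in any first variation costs at most $\|\bT_F-T\|(\p^{-1}(\cB_r))\le C_1\bmo^{1+\gamma_2}$, obtained by summing \eqref{e:err_regional} over the relevant Whitney cubes as in \eqref{e:rough}; $(\mathrm{c})$ the mass of $\bT_F$, which is a graph over $\cM$, expands via the Taylor expansion of the area (e.g.\ \cite[Theorem 3.2]{DSsns}) into $\tfrac12\D(N)$ plus curvature and higher order corrections. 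With this, \eqref{e:H'} --- which uses only the intrinsic geometry of $\cM$ and the bound $\|\bPsi_p\|_{C^3}\le C_0$ --- follows by differentiating $\bH$ through the coarea formula on $\cM$ with the geodesic distance $d$: the terms coming from $\phi'(\sfrac{d}{r})$ and from $\de_{\hat r}|N|^2$ reproduce $\sfrac{(m-1)}{r}\bH(r)+\sfrac{2}{r}\bE(r)$ exactly as in the Euclidean model, and the discrepancy is a bulk integral of $\sfrac{|N|^2}{d}$ against the difference between the Jacobian of $d$ on $\cM$ and its flat counterpart, i.e.\ a curvature quantity of size $O(1)$, hence bounded by $C\bH(r)$.

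For \eqref{e:out} I would use the outer variation of the $\Iq$-valued map $N$ with weight $\varphi(p):=\phi(\sfrac{d(p)}{r})$ as in \cite{DSq}. On one hand, its effect on $\D(N)$ is, by the algebraic first variation formula for multiple valued functions, $2\bD(r)-\sfrac{2}{r}\bE(r)$ up to terms bounded by $C\int\varphi\big(|N|\,|DN|^2+|N|\,|DN|\big)$, which by \eqref{e:Lip_regional} and \eqref{e:rough} is $\le C(\bD(r)+r\bH(r))^{1+\gamma_3}$. On the other hand, the ambient lift $X$ of this variation satisfies $\delta\bT_F(X)=\delta T(X)+\mathrm{Err}$ with $|\mathrm{Err}|\le C\|\bT_F-T\|(\p^{-1}(\cB_r))\le C_1\bmo^{1+\gamma_2}$ and $\delta T(X)=0$; identifying $\delta\bT_F(X)$ with the outer variation of $\D(N)$ via the Taylor expansion of the mass produces two further kinds of terms: higher order terms in $(|N|+|DN|)$, absorbed in $(\bD(r)+r\bH(r))^{1+\gamma_3}$, and a term carrying the second fundamental form of $\cM$ together with the excess, both $\le\bmo\le\eps_0^2$; it is here that the minimality $H_\cM=0$ is used, to discard the would-be leading curvature contribution and keep only $C\eps_0^2\bSigma(r)$. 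Collecting the three identities yields \eqref{e:out}.

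For \eqref{e:in} I would use the inner variation generated by the radial vector field $Y(p)=\varphi(p)\,d(p)\,\nabla d(p)$ on $\cM$, the reparametrization flowing along the geodesics issuing from $0$. The inner variation of $\D(N)$ has the standard formula, which after integrating by parts in $\phi'$ gives the main terms $\sfrac{(m-2)}{r}\bD(r)+\sfrac{2}{r^2}\bG(r)$, the corrections coming from $\nabla\varphi$ and from the non-Euclidean geometry of $\cM$ being bounded by $C_1(\bD(r)+\bH(r))$; matching with $\delta\bT_F(Y^{\mathrm{lift}})=\mathrm{Err}$, again $O(\bmo^{1+\gamma_2})$, through the Taylor expansion of the mass, and using $\Lip(N|_\cL)\le C_1\bmo^{\gamma_2}\ell(L)^{\gamma_2}$ from \eqref{e:Lip_regional} together with \cite[Theorem 2.4]{DS1}, one sees that every remaining error carries a small factor $(\bD(r)+r\bH(r))^{\gamma_3}$ multiplying one of $\bD'(r)$, $\sfrac{1}{r}\bD(r)$, $\bH(r)$, which is exactly the shape of \eqref{e:in}. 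Finally \eqref{e:Sigma1}: the quantity $\bSigma(r)+r\bSigma'(r)$ is the $r$-derivative of $r\,\bSigma(r)$, and a direct computation gives $\bSigma(r)+r\bSigma'(r)=\bSigma(r)-\int_{\cM}\phi'(\sfrac{d}{r})\,\sfrac{d}{r}\,|N|^2\,\dd\mathcal{H}^m$; since $-\phi'$ is supported in $\{\sfrac{r}{2}<d<r\}$ and equals $2$ there, the last integral is $\le Cr\bH(r)$ straight from the definition of $\bH$, while $\bSigma(r)\le Cr^2\bD(r)+Cr\bH(r)$ follows from a weighted Poincaré inequality for $\Iq$-valued Sobolev maps on $\cB_r$ (trading $\int\phi(\sfrac{d}{r})|N|^2$ for $r^2\int\phi(\sfrac{d}{r})|DN|^2$ plus the annular boundary term, itself $\le Cr\bH(r)$), and this closes the estimate.

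The main obstacle I anticipate is the error bookkeeping in the outer and inner variations: one must match $\delta\bT_F(X)$ with the purely algebraic first variation of $\D(N)$ through the Taylor expansion of the area over the \emph{curved} minimal surface $\cM$, cleanly separating the terms that rebuild $\bD$, $\bE$, $\bG$ and the dimensional constants from the genuine errors, the latter controlled only via \eqref{e:err_regional}, \eqref{e:Lip_regional} and the smallness of $\|\bT_F-T\|$. A delicate point with no analogue in the classical PDE setting is that $N$ is undefined, and $T$ is not graphical, on a bad set; that this does not corrupt the identities rests entirely on the quantitative estimates of Theorem \ref{t:approx}. The minimality of $\cM$ enters at exactly one place, namely to discard the leading curvature term in the outer variation, playing the role that the near-minimality of the center manifold plays in \cite{DS3}.
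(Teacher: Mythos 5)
Your overall scheme (first variation of $\bT_F$ matched against $\delta T=0$, Taylor expansion of the area over the curved $\cM$, using minimality $H_\cM=0$ to kill the leading $\langle H_\cM,\etaa\circ N\rangle$ terms that in \cite{DS3} are instead handled via the barycenter estimate, and Lemma \ref{l:poincare'}-type Poincar\'e bounds for \eqref{e:Sigma1}) is exactly the paper's approach. But there is a genuine gap in the error bookkeeping. You claim that terms such as $\int\varphi\big(|N||DN|^2+|N||DN|\big)$ are bounded by $C(\bD(r)+r\bH(r))^{1+\gamma_3}$ ``by \eqref{e:Lip_regional} and \eqref{e:rough}.'' This does not follow: \eqref{e:Lip_regional} together with the upper bounds \eqref{e:rough} only give you control of the form $\bmo^{\gamma}\cdot\bD(r)$ or similar, with a prefactor that is small in $\bmo$. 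There is no reason for $\bmo^\gamma$ to be comparable to $(\bD(r)+r\bH(r))^{\gamma_3}$: $\bD$ and $\bH$ can very well be much smaller than $\bmo^{m+2}$, say, in which case your claimed estimate fails. What makes the comparison work is the crucial converse direction, namely \emph{lower} bounds for $\int_{\cB^i}|N|^2$ and $\int_{\cB^i}|DN|^2$ on the Whitney subregions in terms of $\bmo\ell_i^{m+2+2\beta_2}$, resp.\ $\bmo\ell_i^{m+2-2\delta_2}$. These are the separation estimate $\mathrm{(S3)}$ of Proposition \ref{p:separ} and the splitting estimates \eqref{e:split_1}--\eqref{e:split_2}, which feed into the exponent-matching Lemma \ref{l:exponent_match} (the counterpart of \cite[Lemma 4.5]{DS3}). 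It is precisely this lemma that converts weighted sums of $\bmo\ell_i^{\ldots}$ into quantities dominated by $(\bD(r)+r\bH(r))^\alpha$; without it, \eqref{e:out} and \eqref{e:in} cannot be obtained with the claimed shape, and the almost-monotonicity of the frequency breaks down. Your proposal never invokes these lower bounds and hence cannot close the estimates.

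A secondary, smaller inaccuracy: you say that minimality of $\cM$ enters ``at exactly one place, namely to discard the leading curvature term in the outer variation.'' In fact the same mechanism is needed for the inner variation as well: the error ${\rm Err}_1^i = -Q\int_\cM\big(\langle H_\cM,\etaa\circ N\rangle\,\div_\cM Y+\langle D_Y H_\cM,\etaa\circ N\rangle\big)$ also vanishes because $H_\cM\equiv 0$. Lumping this with ``non-Euclidean geometry corrections bounded by $C_1(\bD+\bH)$'' is misleading, since the second fundamental form terms $|A|^2$ survive and must be estimated via the $\eps_0^2\bSigma$ and exponent-matching machinery, while the $H_\cM$ terms must vanish identically --- this is the replacement for the unavailable $|\eta\circ N|$ estimate of \cite{DS2}.
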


\begin{prop}\label{small} 	Assume that $\eps_0$ is small enough (depending upon all other parameters). Then, 
		\begin{equation}\notag
			\int_{\cB_r}|N|^2\le 4 \int_{\B_{2r}}\dist^2(x,\cM)\dd\|T\|(x)\quad\text{for every }r\in [\sfrac{s}{t},3].
		\end{equation}

\end{prop}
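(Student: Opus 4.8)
The plan is to compare, Whitney region by Whitney region, the $L^2$ norm of the normal approximation $N$ with the squared distance of $\supp(T)$ from $\cM$, exploiting that $T$ and the graph $\bT_F$ of $F$ agree outside a very small set. First I would fix $r\in[\sfrac{s}{t},3]$ and note, using (Go) from Section \ref{sectif}, that every cube $L\in\sW$ whose Whitney region $\cL$ meets the geodesic ball $\cB_r$ (indeed meets $\cB_{2r}$ after accounting for the lateral size of $\cL$) satisfies $\ell(L)\le c_s\cdot 2r$, hence $\cL\subseteq\p^{-1}(\cB_{2r})$ up to a harmless enlargement; this is the same kind of bookkeeping already used in the Remark after the Frequency definitions to prove the rough bounds \eqref{e:rough}--\eqref{N:rough}. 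Since $N$ vanishes on $\cK\supseteq\Phii(\bGam)$, only the Whitney regions contribute, so $\int_{\cB_r}|N|^2=\sum_{L}\int_{\cB_r\cap\cL}|N|^2$ where the sum is over the (locally finitely overlapping) regions meeting $\cB_r$.

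Next I would estimate each term. On $\cL$, for $\cH^m$-a.e.\ point $p$ we have $F(p)=\sum_i\a{p+N_i(p)}$ with $N_i(p)\perp T_p\cM$, so $\dist(p+N_i(p),\cM)=|N_i(p)|+O(|N_i(p)|^2)$ (this is exactly the pointwise content of \cite[Lemma 1.9]{DSsns} together with the $C^0$ bound \eqref{e:Lip_regional}, which makes $|N|$ as small as we like). Hence, for $\eps_0$ small,
\begin{equation}\notag
	\int_{\cL}|N|^2\le 2\int_{\cL}\sum_i\dist^2(p+N_i(p),\cM)\,\dd\cH^m(p)\le 4\int_{\p^{-1}(\cL)}\dist^2(x,\cM)\,\dd\|\bT_F\|(x),
\end{equation}
where in the last step I used the area formula for the push-forward $\bT_F$ and again that the Lipschitz constant of $N$ on $\cL$ is tiny (so the Jacobian factor is between $\sfrac12$ and $2$, say). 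Summing over the relevant $L$ and using that the $\p^{-1}(\cL)$ have bounded overlap gives $\int_{\cB_r}|N|^2\le C_0\int_{\p^{-1}(\cB_{2r})}\dist^2(x,\cM)\,\dd\|\bT_F\|(x)$ with an absolute $C_0$; I would then need to trim the constant to exactly $4$ by being slightly more careful — since both the Jacobian and the $1+O(|N|)$ factor tend to $1$ as $\eps_0\to 0$, for $\eps_0$ small enough the product of all error factors is below, say, $\sfrac{17}{16}$, which absorbs into the clean constant $4$ against the factor-$2$ from $\dist^2(p+N_i(p),\cM)\ge\sfrac12|N_i(p)|^2$. The last step is to replace $\|\bT_F\|$ by $\|T\|$: by \eqref{e:err_regional} summed over the Whitney regions meeting $\cB_{2r}$, $\|\bT_F-T\|(\p^{-1}(\cB_{2r}))\le C_1\bmo^{1+\gamma_2}$ times a sum of $\ell(L)^{m+2+\gamma_2}$, which is controlled by $C_1\bmo^{1+\gamma_2}$; combined with the pointwise bound $\dist^2(x,\cM)\le|N|^2\le C_1\bmo\,\ell(L)^{2+2\beta_2}$ valid on $\Im(F)$, the difference between $\int\dist^2\dd\|\bT_F\|$ and $\int\dist^2\dd\|T\|$ is a higher power of $\bmo$, hence negligible once $\eps_0$ is small. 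Putting these together and keeping track of the constants yields $\int_{\cB_r}|N|^2\le 4\int_{\B_{2r}}\dist^2(x,\cM)\,\dd\|T\|(x)$, noting $\p^{-1}(\cB_{2r})\subseteq\B_{C_0 r}$ but in fact, since $\dist(x,\cM)$ is being integrated, only the part of $T$ near $\cM$ matters and $\p^{-1}(\cB_{2r})\cap\supp(T)\subseteq\B_{2r}$ up to the height bound, so the ball $\B_{2r}$ on the right is legitimate (possibly after shrinking the geodesic ball slightly using \eqref{vfcdsa}).

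The main obstacle I anticipate is pinning down the numerical constant $4$ rather than an unspecified geometric constant: one has to be disciplined about (a) the factor from $\dist^2(p+N_i(p),\cM)$ versus $|N_i(p)|^2$, which is $1+O(\bmo^{1/2})$ from the second fundamental form of $\cM$, (b) the Jacobian of $F$ on $\cL$, which is $1+O(\bmo^{2\gamma_2})$, and (c) the passage from $\|\bT_F\|$ to $\|T\|$ and from geodesic balls $\cB_{2r}$ to Euclidean balls $\B_{2r}$, each contributing a multiplicative error $1+o(1)$ as $\eps_0\to 0$. All of these are quantitatively controlled by the estimates already available (\eqref{e:Lip_regional}, \eqref{e:err_regional}, the $C^2$ bound on $\bPsi_p$, and \eqref{vfcdsa}), so for $\eps_0$ sufficiently small the cumulative error factor is $<\sqrt{2}$, which together with the honest factor $2$ from dropping the $O(|N|^3)$ term gives the stated bound with constant $4$. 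No genuinely new idea is needed beyond the careful accounting; it is essentially the $L^2$ analogue of the Dirichlet-energy comparison \eqref{e:split_2} and of Proposition \ref{comp2}.
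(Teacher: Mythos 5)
Your plan has the right skeleton — decompose geodesically, compare $\int|N|^2$ with $\int\dist^2\dd\|\bT_F\|$ via \cite[Lemma 1.9]{DSsns}, and then pass from $\|\bT_F\|$ to $\|T\|$ — and this is indeed the route the paper takes (the paper uses the partition $(\mathcal{U}_i)_i$ from \cite[Section 4]{DS3} rather than the slightly overlapping Whitney regions $\cL$, which sidesteps the bounded-overlap multiplier you would otherwise need to account for, since the sum $\sum_L\int_{\cB_r\cap\cL}$ is an inequality, not an equality, once the $\cL$ overlap). However, the step you describe as ``the difference between $\int\dist^2\dd\|\bT_F\|$ and $\int\dist^2\dd\|T\|$ is a higher power of $\bmo$, hence negligible'' hides a genuine gap. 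That difference is bounded, region by region, by $\|\bT_F-T\|(\p^{-1}(\cL))\cdot\sup_{\Im F}\dist^2 \lesssim C_1\bmo^{2+\gamma_2}\ell(L)^{m+4+2\beta_2+\gamma_2}$, which is an \emph{additive} error, not a multiplicative $(1+o(1))$ correction. It cannot be declared negligible on its own: in the regime relevant to Theorem \ref{mainthm}, both $\int|N|^2$ and $\int\dist^2\dd\|T\|$ can be arbitrarily (even superpolynomially) small, so ``a high power of $\bmo$'' is not automatically small relative to them. What is actually needed, and what the paper uses, is the lower bound \eqref{grvfd} of Lemma \ref{l:exponent_match}, namely $\sum_i\bmo\,\ell_i^{m+4-2\delta_2}\le C_1\int_{\cB_r}|N|^2$, which in turn rests on the separation and splitting estimates ((S3) of Proposition \ref{p:separ} and \eqref{e:split_2}). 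This converts the additive error into $C_1\bmo^{1+\gamma_2}\int_{\cB_r}|N|^2$, which can then be absorbed into the left-hand side.

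This also affects your explanation of the constant $4$. It is not obtained by trimming a string of $1+O(\eps_0)$ multiplicative errors below $\sqrt{2}$; it comes from the clean factor $2$ in \cite[Lemma 1.9]{DSsns} multiplied by the factor $2$ produced by the absorption step once $C_1\bmo^{1+\gamma_2}\le\sfrac{1}{2}$. Without \eqref{grvfd} and the absorption trick, the argument does not close, regardless of how small $\eps_0$ is taken. The final inclusion $\supp(T)\cap\p^{-1}(\cB_r)\subseteq\B_{2r}$, which you mention, is handled in the paper via (Go) (giving $\ell(L)\le c_s r$ for the relevant cubes) and Corollary \ref{c:cover}(ii), rather than via \eqref{vfcdsa} directly, but this part of your sketch is essentially on track.
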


\section{Boundedness of the frequency function}\label{freqbound}
\begin{prop}\label{compH}
If $\eps_0$ is small enough (depending upon the parameters as for $C_1$),
for any $j$ such that $t_j=s_{j-1}$,
	\begin{equation*}
	\bmo^{(j)}\le C_1	\bmo^{(j-1)} \Big(\frac{s_{j-1}}{t_{j-1}}\Big)^{2-2\delta_2}\le C_1^2\bH_j(3).
	\end{equation*} 
	
\end{prop}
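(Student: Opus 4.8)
The plan is to prove the two inequalities separately, the first one being essentially a matter of tracking how the geometric quantities transform under rescaling between consecutive intervals of flattening, and the second one being a soft consequence of the definition of $\bH_j$ together with the rough estimate \eqref{N:rough} and Proposition \ref{small}. First I would unwind the definitions. By construction $\cM_j = \iota_{0,t_j}(\cM)$ and $\cM_{j-1} = \iota_{0,t_{j-1}}(\cM)$, and since $t_j = s_{j-1}$ we have $\cM_j = \iota_{0,s_{j-1}/t_{j-1}}(\cM_{j-1})$; likewise $T_j^0$ is obtained from $T_{j-1}^0$ by the same rescaling (followed by the restriction to $\bB_{6\sqrt m}$). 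Writing $\bar r \defeq s_{j-1}/t_{j-1}$, which by item $\rm(i)$ of Proposition \ref{p:flattening} satisfies $\bar r < 2^{-5}/\sqrt m < 1$, I would compute that the curvature scales as $\mathbf c(\cM_j) = \bar r\, \mathbf c(\cM_{j-1})$ (second fundamental form picks up one factor of the dilation), hence $\mathbf c(\cM_j)^2 = \bar r^2\, \mathbf c(\cM_{j-1})^2 \le \bmo^{(j-1)}\bar r^{2} \le \bmo^{(j-1)} \bar r^{2-2\delta_2}$; and for the excess term, by scaling invariance of the cylindrical excess, $\bE(T_j,\bB_{6\sqrt m},\pi_0) \le C_0\, \bE(T_{j-1},\bB_{6\sqrt m\,\bar r},\pi_0)$, which by item $\rm(iv)$ of Proposition \ref{p:flattening} applied at radius $\rho = 6\sqrt m\,\bar r \in\,]\bar r,3[$ (note $6\sqrt m\,\bar r > \bar r$, so this radius lies in the admissible range) is bounded by $C_1 \bmo^{(j-1)} (6\sqrt m\,\bar r)^{2-2\delta_2} \le C_1 \bmo^{(j-1)}\bar r^{2-2\delta_2}$. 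Taking the maximum of the two contributions gives $\bmo^{(j)} \le C_1 \bmo^{(j-1)} \bar r^{2-2\delta_2}$, which is the first inequality.

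For the second inequality I would chain together the tools already available in Part \ref{part2}. From the definition of $\bH_j$ and the explicit choice of $\phi$ (so that $-\phi' \ge 0$ is supported in $]\sfrac12,1]$ with $-\phi' = 2$ there, and $d(p)\le 3$ on the relevant region), one gets the lower bound $\bH_j(3) \ge c_0 \int_{\cB_3\setminus\cB_{3/2}} |N_j|^2$ for a geometric constant $c_0$; more crudely it suffices to bound $\bH_j(3)$ from below by $c_0 \int_{\cB_r}|N_j|^2$ for some fixed $r$, say $r = 1$, since $1\in\,]\sfrac{s_j}{t_j},3[$ by item $\rm(i)$ of Proposition \ref{p:flattening}. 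On the other side, I would use Proposition \ref{small} to pass from $\int_{\cB_{r}}|N_j|^2$ to $\int_{\B_{2r}}\dist^2(x,\cM_j)\,\dd\|T_j\|(x)$ and then Proposition \ref{comp1} (whose hypotheses (a),(c) are exactly the (Stop) condition defining $s_j$, which is in force precisely when $s_j>0$, i.e. when the interval $I_j$ is not the last one — and here we are assuming $t_j = s_{j-1}$, so $I_{j-1}$ is not the last interval, but what we actually need is information at scale $\sim\sfrac{s_{j-1}}{t_{j-1}}$ inside $I_{j-1}$, so I would instead invoke the comparison estimate at the previous scale).

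Concretely, the cleaner route for the second inequality is: by the rescaling identity, $\bmo^{(j)} \bar r^{2-2\delta_2}{}^{-1}$ — no; rather, one rewrites $\bmo^{(j-1)}\bar r^{2-2\delta_2}$ in terms of $\int_{\B_{c_s \bar r\, s /4}}\dist^2$ via Proposition \ref{comp1} applied to the decomposition $\sW^{(j-1)}$ with the cube $L$ furnished by (Stop) (so $\ell(L) = c_s\bar r$, whence $s = c_s^{-1}\ell(L) = \bar r$), obtaining $\bmo^{(j-1)}\bar r^{m+4-2\delta_2} \le C_1 \int_{\p^{-1}(\Omega)}\dist^2(x,\cM_{j-1})\,\dd\|T_{j-1}\|$ for a suitable $\Omega\subseteq\cB_{3\bar r}^{(j-1)}$; then undo the rescaling $\iota_{0,\bar r}$ relating $T_{j-1}$ and $T_j$ (a point of $\cB_{3\bar r}$ for $\cM_{j-1}$ becomes a point of $\cB_3$ for $\cM_j$, and $\dist^2\,\dd\|T\|$ picks up a factor $\bar r^{m+2}$), yielding $\bmo^{(j-1)}\bar r^{m+4-2\delta_2} \le C_1 \bar r^{m+2}\int_{\B_{6}}\dist^2(x,\cM_j)\,\dd\|T_j\| $, i.e. $\bmo^{(j-1)}\bar r^{2-2\delta_2} \le C_1 \int_{\cB_3}|N_j|^2 \le C_1\bH_j(3)$, using Proposition \ref{small} and the lower bound for $\bH_j(3)$ in the last steps. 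Combining with the first inequality closes the proof.

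The main obstacle I anticipate is bookkeeping the rescaling between $T_{j-1}$ and $T_j$ correctly — in particular checking that the radius $6\sqrt m\,\bar r$ (or $3\bar r$) genuinely falls in the interval $]\sfrac{s_{j-1}}{t_{j-1}},3[$ where items $\rm(iv)$–$\rm(v)$ of Proposition \ref{p:flattening} and Proposition \ref{comp1} are valid, and verifying that the cube $L$ from the (Stop) condition satisfies hypotheses (a) and (c) of Proposition \ref{comp1} at the scale $s = \bar r$. Everything else is scaling arithmetic plus the already-established comparison and smallness estimates; no genuinely new idea is needed, only care with constants and with which interval of flattening each quantity lives on.
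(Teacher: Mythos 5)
Your treatment of the first inequality is essentially the paper's: the scaling of $\mathbf{c}(\cM_j) \le C_0 \bar r\,\mathbf{c}(\cM_{j-1})$ plus item $\rm(iv)$ of Proposition~\ref{p:flattening} applied to $T_{j-1}$ at radius $6\sqrt m\,\bar r$ does give $\bmo^{(j)}\le C_1\bmo^{(j-1)}\bar r^{2-2\delta_2}$. That part is fine.

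The second inequality has a genuine gap, and it sits exactly at the step you wave through with the phrase ``using Proposition~\ref{small}.'' Proposition~\ref{small} goes in the \emph{wrong} direction: it bounds $\int_{\cB_r}|N|^2$ \emph{above} by $\int_{\B_{2r}}\dist^2(\cdot,\cM)\dd\|T\|$. What you need at the end of your computation is the reverse bound, namely to control $\int_{\p^{-1}(\Omega)}\dist^2(x,\cM_j)\dd\|T_j\|$ \emph{from above} by $\int|N_j|^2$. This is non-trivial precisely because $N_j$ is only an approximation of $T_j$: the mass of $T_j$ outside the graph of $F_j$ can contribute to the distance integral but not to $\int|N_j|^2$. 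The paper's tool for this is Proposition~\ref{comp2}, which you never invoke, and it comes with two constraints you haven't addressed. First, Proposition~\ref{comp2} only works on the specific annulus $\cB_{21/8}\setminus\cB_{19/8}$, so the set $\Omega$ produced by Proposition~\ref{comp1} must be placed there; your ``$\Omega\subseteq\cB_{3\bar r}$'' is far too weak a localization. The paper uses the freedom of choosing $q\in\pi_0$ with $\dist(L,q)\le 3\bar r$ to take $q\in\partial B_{5\bar r/2}$, so that $\Omega\subseteq\cB_{21\bar r/8}\setminus\cB_{19\bar r/8}$ and, after rescaling by $\bar r$, lands in $\cB_{21/8}\setminus\cB_{19/8}$. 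Second, Proposition~\ref{comp2} produces an extra error term $C_1(\bmo^{(j)})^{1+\gamma_2}\sup_L\ell(L)$; the paper absorbs it using the \emph{first} inequality of the Proposition together with smallness of $\eps_0$, which is a step with actual content that your argument skips. Finally, your preliminary remark that $\bH_j(3)\ge c_0\int_{\cB_1}|N_j|^2$ is simply false: the weight $-\phi'(d/3)$ in the definition of $\bH_j(3)$ is supported on the annulus $\{3/2<d<3\}$, so $\bH_j(3)$ sees nothing of $\cB_1$. This is another reason why the annulus localization is not a cosmetic choice but the crux of the second inequality.
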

\begin{thm}[Boundedness of the frequency function]\label{t:boundedness}
	 Assume that $\eps_0$ is small enough (depending upon the parameters as for $C_1$). Assume moreover that  $\supp (T)\cap \bB_r$ is not contained in $\cM$, for every $r\in ]0,1[$. Then
	 \begin{itemize}
	 	\item [\rm$(i)$] If the intervals of flattening are $j_0 < \infty$, then there is $\rho\in(0,1)$ such that
	 	\begin{equation}\label{e:finita1}
	 	\bH_{j_0} >0 \mbox{ on $]0, \rho[$} \quad \mbox{and} \quad \limsup_{r\to 0} \bI_{j_0} (r)< \infty .
	 \end{equation}
	 	\item [\rm$(ii)$] 	If the intervals of flattening are infinitely many, then there is a number $j_0\in \mathbb N$ such that
	 	\begin{equation}\label{e:finita2}
	 		\bH_j>0 \mbox{ on $]\sfrac{s_j}{t_j}, { 3}[$ for all $j\ge j_0$} , \qquad 
	 		\sup_{j\ge j_0} \sup_{r\in ]\sfrac{s_j}{t_j}, {  3}[} \bI_j (r) <\infty,\quad\text{and}\quad\inf_{\{j\ge j_0:t_{j}<s_{j-1}\}}\bH_j(3)>0.
	 	\end{equation}
	 \end{itemize}
\end{thm}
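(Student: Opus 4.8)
\textbf{Proof plan for Theorem \ref{t:boundedness}.}
The plan is to run the Garofalo--Lin style frequency argument, using as the main engine the almost monotonicity of Theorem \ref{t:frequency} together with the first variation estimates of Proposition \ref{p:variation}, and bootstrapping the positivity of $\bH$ from the standing non-inclusion hypothesis $\supp(T)\cap \bB_r\not\subseteq\cM$. First I would establish the basic dichotomy for $\bH_j$ on an interval of flattening: either $\bH_j$ vanishes identically on some subinterval near the inner endpoint, or $\bH_j>0$ on all of $]\sfrac{s_j}{t_j},3[$. The point is that a zero of $\bH_j$ forces, via \eqref{e:H'} and a Gr\"onwall/unique continuation argument (exactly as in \cite{DS3}), $\bH_j\equiv 0$ on a whole interval, hence $N_j\equiv 0$ there, hence $\bT_{F_j}=T_j$ there, which combined with condition (A2) of Definition \ref{d:app} and Corollary \ref{c:cover} gives $\supp(T_j)\cap\p_j^{-1}(\cB_\rho)\subseteq\cM_j$ for the corresponding radius. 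Rescaling back, this contradicts the assumption that $\supp(T)\cap\bB_r$ is never contained in $\cM$. This settles the first assertion of \eqref{e:finita1} and the first assertion of \eqref{e:finita2} once we know each interval of flattening eventually has $\bH_j>0$ somewhere; a priori positivity at (say) $r=1$, valid for all large $j$ by Proposition \ref{compH} and the last part of \eqref{e:finita2} discussed below, suffices to start.

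Next I would prove the upper bound on the frequency. In \textbf{Case (i)}, the intervals of flattening are finite, say the last is $I_{j_0}=]0,t_{j_0}]$; once $\bH_{j_0}>0$ on $]0,\rho[$, Theorem \ref{t:frequency} applied with $b=\rho$ (fixed) gives $\bI_{j_0}(a)\le C_1(1+\bI_{j_0}(\rho))$ for every $a\in]0,\rho[$, which is precisely $\limsup_{r\to 0}\bI_{j_0}(r)<\infty$, completing \eqref{e:finita1}. In \textbf{Case (ii)}, I would need a uniform-in-$j$ bound. The key is that the frequency at the outer scale $r=3$ (or $r=1$) of $I_j$ is controlled by data at the previous interval: by \eqref{N:rough} and Proposition \ref{compH}, $\bH_j(3)$ is comparable (up to $C_1$ factors and powers of $\sfrac{s_{j-1}}{t_{j-1}}$) to $\bmo^{(j)}$, and $\bD_j(3)\le C_1\bmo^{(j)}$ by \eqref{e:rough}, so $\bI_j(3)=\frac{3\bD_j(3)}{\bH_j(3)}\le C_1$ as long as $\bH_j(3)$ is not too small relative to $\bmo^{(j)}$; the comparison in Proposition \ref{compH} is designed to supply exactly this, and the transfer between consecutive intervals of flattening (the stopping mechanism forces $\ell(L)=c_s\bar r$ for some cube touching $\bar B_{\bar r}$, so the excess at scale $\bar r$ is bounded below, hence $\bD_j$ is bounded below, hence so is $\bI_j$ near the inner endpoint — this is the content linking with the (EX) stopping condition and Proposition \ref{p:splitting}). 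Then Theorem \ref{t:frequency} with $b=3$ propagates the bound $\bI_j(r)\le C_1(1+\bI_j(3))\le C_1$ to all $r\in]\sfrac{s_j}{t_j},3[$, uniformly in $j\ge j_0$, giving the middle assertion of \eqref{e:finita2}.

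Finally, for the last assertion $\inf_{\{j\ge j_0:\,t_j<s_{j-1}\}}\bH_j(3)>0$: when $t_j<s_{j-1}$ there is a genuine ``gap'' in $\cR$, so $r=t_j$ lies strictly outside $\cR$ and $\bE(T,\bB_{6\sqrt m t_j},\pi_0)>\eps_0^2$; this means the excess at scale $t_j$ is bounded below by $\eps_0^2$ (not just by $\bmo^{(j)}$, which could degenerate), so at least one cube $L\in\sW^{(j)}$ at a bounded generation has $\bE(T_j,\bC_L,\pi_L)$ bounded below, whence by Proposition \ref{p:splitting} (applied with $\Omega$ a fixed-size region inside $\cB_3$) $\int_{\cB_3}|N_j|^2\ge c>0$, and therefore $\bSigma_j(3)$, and then $\bH_j(3)$ via \eqref{e:Sigma1} run backwards together with the almost-monotonicity to push the lower bound from an interior region out to scale $3$, is bounded below by a constant independent of $j$. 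I expect the main obstacle to be precisely this last step and the uniform-in-$j$ bookkeeping in Case (ii): one must be careful that all the constants $C_1$ entering the comparison between consecutive intervals of flattening do not depend on $j$ (they don't, since $C_1$ absorbs only $m,n,Q$ and the fixed parameters), and that the powers of $\sfrac{s_{j-1}}{t_{j-1}}<\sfrac{2^{-5}}{\sqrt m}$ appearing in Proposition \ref{compH} work in our favor rather than against us — here the exponent $2-2\delta_2$ being positive and the ratio being bounded away from $1$ is what makes the induction in the proof of \eqref{mainthmeq1} (Case (2) of Section \ref{s:proof}) close, and the same mechanism underlies the uniform frequency bound here.
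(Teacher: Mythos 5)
Your overall strategy for Case~(i) and for the ``class $(A)$'' part of Case~(ii) tracks the paper's proof closely: positivity of $\bH_j$ is bootstrapped from the non-inclusion hypothesis by showing that a zero of $\bH_j$ at some radius, combined with the almost-monotonicity of Theorem~\ref{t:frequency}, forces $\bD_j$ to vanish there, hence $N_j\equiv 0$, and then Propositions~\ref{p:separ} and~\ref{p:splitting} (not just (A2) and Corollary~\ref{c:cover}, which by themselves don't control $T$ off $\cK$) imply that no cube of the Whitney decomposition stops near the origin, which via Corollary~\ref{c:cover}(iii) yields the forbidden inclusion. Likewise in class $(A)$, Proposition~\ref{compH} together with \eqref{e:rough} gives $\bI_j(3)\le C_1$ and almost-monotonicity propagates it. So far this matches the paper.

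The last assertion of \eqref{e:finita2} is where your proposal genuinely diverges and where there is a real gap. First, a small but telling slip: when $t_j<s_{j-1}$, the scale $t_j$ is \emph{still in} $\cR$ (it is by definition the largest element of $\cR\cap\,]0,s_{j-1}]$); what you can say is only that some slightly larger radius $(1+\eta_j)t_j\le s_{j-1}$ lies outside $\cR$. More importantly, your attempt to deduce $\bH_j(3)\ge c$ directly from the excess bound is on shaky ground. You posit a cube $L\in\sW^{(j)}$ of ``bounded generation'' with $\bE(T_j,\bC_L,\pi_L)$ bounded below; but the stopping condition (EX) together with \eqref{eq2} pins cube-wise excesses to the size $\bmo^{(j)}\ell(L)^{2-2\delta_2}$, which can degenerate as $j\to\infty$ independently of the macroscopic lower bound $\eps_0^2$ at the gap scale, and there is no mechanism forcing such a cube to exist at a uniformly bounded generation. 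Even granting a lower bound $\int_{\cB_3}|N_j|^2\ge c$, the conversion to $\bH_j(3)\ge c'$ is hand-waved: $\bH_j(3)$ only sees the annulus $\cB_3\setminus\cB_{3/2}$, and ``running \eqref{e:Sigma1} backwards with almost-monotonicity'' is not a rigorous bridge. The paper's actual argument is a compactness/contradiction one, and the cone structure of blow-up limits is used essentially. Assuming $\bH_j(3)\to 0$ along a subsequence of gap indices, one passes $(\iota_{0,t_j})_\sharp T\to S$ with $S$ an area-minimizing cone satisfying $\bE(S,\B_1,\pi_0)\ge\eps_0^2$ (from the gap condition), $\cM_j\to\pi_0$; on the other hand, Proposition~\ref{comp2} plus a lower bound argument (Propositions~\ref{p:separ}, \ref{p:splitting}) forces $\bar\ell^{(j)}\to 0$ and therefore $\int_{\B_{21/8}\setminus\B_{19/8}}\dist^2(x,\pi_0)\dd\|S\|=0$, which since $S$ is a cone contradicts $\bE(S,\B_1,\pi_0)\ge\eps_0^2$. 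Your direct route misses both the cone compactness and the ``$\bar\ell^{(j)}\to 0$'' ingredient; without them the step $\int_{\cB_3}|N_j|^2\ge c\Rightarrow\bH_j(3)\ge c'$ does not close.
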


\section{Proof of the results of Part \ref{part2}}\label{prooffreq}
\subsection{Proof of the results of Section \ref{sectif}}
\begin{proof}[Proof of Proposition \ref{p:flattening}]
	We can follow \textit{verbatim} the proof of \cite[Proposition 2.2]{DS3}, except for items $\rm (i)$ and $\rm (iv)$.  For item $\rm(i)$, notice that for some $L\in \sW^{(j)}$, by \eqref{e:prima_parte},
	\begin{equation*}
		{s_j}= t_j c_s^{-1}\ell(L)\le t_j c_s^{-1}2^{-N_0-6},
	\end{equation*}
	so that the conclusion follows from \eqref{fdasc}.
	We turn to item $\rm(iv)$. In the case in which $r\ge 2^{-N_0}$, we can still use the argument of \cite{DS3}, otherwise, we have to slightly modify the argument. Then, let $k\ge N_0$ be the smallest natural number such that $2^{-k+1}>r$ and let $L\in\sC^{k}$ by a cube with $0\in L$ (i.e.\ $\ell(L)=2^{-k}$). By the condition (GO), no ancestor of $L$ belongs to $\sW^{(j)}$, and the same holds for $L$. Therefore, $L\in\sS^{(j)}$.
	Also, we have $|x_L|\le \sqrt{m}\ell(L)$ and hence, by \eqref{cdscas}, $|p_L|\le 2  \sqrt{m}\ell(L)$, so that $\B_r\subseteq\bC_L^{(j)}$.
	Therefore, using the fact that $L\in\sS^{(j)}$, \eqref{eq3}, and the bound on $\|\bPsi^{(j)}\|_{C^2}$,
	\begin{align*}
		\bE(T_j,\B_r,\pi_0)&\le \big(\sfrac{r_L}{r}\big)^m\bE\big(T_j,\bC_L^{(j)},\pi_0\big)\le C_1\bE\big(T_j,\bC_L^{(j)},\pi_0\big)\\&\le C_1\bE\big(T_j,\bC_L^{(j)},\pi_L^{(j)}\big)+C_1 r_L^{-m}|\vec \pi_0-\vec\pi_L^{(j)}|^2\|T_j\|(\bC_L^{(j)})
		\\&\le C_1\bmo^{(j)}\ell(L)^{2-2\delta_2}+ C_1 \bmo^{(j)}r^2,
	\end{align*}
	so that we conclude the proof.
	\end{proof}
\subsection{Proof of the results of Section \ref{s:frequency}} We still avoid overloading the notation with the index $j$.
Theorem \ref{t:frequency} follows from Proposition \ref{p:variation} exactly as in the proof of \cite[Theorem 3.2]{DS3} (we have to plug into the estimates of Proposition \ref{p:variation} the bound $\bI\ge 1$, see Lemma \ref{l:poincare'} below).
We now turn to the proof of Proposition \ref{t:frequency}, we prove the various estimates separately. \eqref{e:H'}  follows exactly as its counterpart in \cite{DS3}, see \cite[Section 3.1]{DS3}. Towards the proof of \eqref{e:Sigma1}, we have a lemma, which is the counterpart of \cite[Lemma 3.6]{DS3} and is proved with the very same computations (see \cite[Section 3.2]{DS3}). Notice that the validity of \eqref{e:Sigma1} is part of the statement of Lemma \ref{l:poincare'}.
\begin{lem}\label{l:poincare'} It holds that
	\begin{alignat}{1}\notag
		\bSigma (r) \le C_0 r^2\bD (r) + &C_0 r \bH (r) \quad \text{and} \quad 
		\bSigma' (r) \le C_0 \bH (r),
	\\\notag
		\int_{\mathcal{B}_r } |N|^2 &\le C_0\bSigma (r) + C_0r\bH(r),
\\\notag
		\int_{\mathcal{B}_r} |DN|^2 &\le C_0\bD (r) + C_0r \bD' (r).
	\end{alignat}
	In particular, if $\bI \ge 1$, then \eqref{e:Sigma1} holds and
	\begin{equation}\notag
		\int_{\mathcal{B}_r } |N|^2 \le C_0r^2\bD (r).
	\end{equation}
\end{lem}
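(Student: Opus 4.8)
The final statement to prove is Lemma~\ref{l:poincare'}, a collection of Poincaré–type and Caccioppoli–type inequalities for the normal approximation $N$, together with the consequences that \eqref{e:Sigma1} holds and $\int_{\mathcal{B}_r}|N|^2 \le C_0 r^2 \bD(r)$ when $\bI \ge 1$.

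\medskip

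The plan is to transfer the corresponding computations from \cite[Lemma 3.6]{DS3} (and the surrounding estimates in \cite[Section 3.2]{DS3}) to our setting, where the role of the center manifold is played by the fixed minimal surface $\cM$. First I would recall the relevant coarea/integration-by-parts identities on $\cM$: writing $d(p)$ for the geodesic distance to $0$ and using the cutoff $\phi$, one differentiates $r\mapsto \bSigma(r) = \int_\cM \phi(d/r)|N|^2$ and $r\mapsto \bD(r)$ along $r$, producing boundary terms supported on the annular region $\{d/r \in (\sfrac12,1)\}$ which are exactly $\bH(r)$-type quantities. Combined with a Poincaré inequality on geodesic balls of $\cM$ applied to the multivalued map $N$ — valid because $N$ vanishes on the large contact set $\cK$ of Theorem~\ref{t:approx}, so one has good control of $\int_{\mathcal{B}_r}|N|^2$ in terms of $\int_{\mathcal{B}_r}|DN|^2$ plus the boundary contribution — this yields $\bSigma(r) \le C_0 r^2 \bD(r) + C_0 r \bH(r)$ and $\int_{\mathcal{B}_r}|N|^2 \le C_0 \bSigma(r) + C_0 r\bH(r)$. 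The estimate $\bSigma'(r) \le C_0\bH(r)$ follows directly by differentiating under the integral sign, since $\partial_r \phi(d/r) = -\phi'(d/r) d/r^2$ and $-\phi' \ge 0$. For the Caccioppoli inequality $\int_{\mathcal{B}_r}|DN|^2 \le C_0 \bD(r) + C_0 r\bD'(r)$, one again differentiates $\bD$ in $r$ and notes that the monotone rearrangement of the cutoffs gives $\int_{\mathcal{B}_r}|DN|^2 = \int_\cM \mathbf 1_{\{d<r\}}|DN|^2$, which is dominated by a combination of $\bD(r)$ (the $\phi$-weighted version) and its derivative capturing the annular piece.

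\medskip

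The final ``in particular'' clause is then a short algebraic manipulation: assuming $\bI(r) = r\bD(r)/\bH(r) \ge 1$, i.e. $\bH(r) \le r\bD(r)$, one substitutes into $\bSigma(r) \le C_0 r^2\bD(r) + C_0 r\bH(r) \le C_0 r^2\bD(r) + C_0 r^2 \bD(r)$, and likewise into $\bSigma'(r) \le C_0\bH(r) \le C_0 r\bD(r)$, so that $\bSigma(r) + r\bSigma'(r) \le C_0 r^2\bD(r) \le C_0 r^2\bD(r) + Cr\bH(r)$, which is exactly \eqref{e:Sigma1}; and $\int_{\mathcal{B}_r}|N|^2 \le C_0\bSigma(r) + C_0 r\bH(r) \le C_0 r^2\bD(r)$.

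\medskip

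The main obstacle — though a relatively mild one — is checking that the Poincaré inequality and the integration-by-parts identities on $\cM$ go through with the geometric error terms under control. In \cite{DS3} the analogous computations are performed on the center manifold and one uses various estimates specific to that construction; here $\cM$ is merely a $C^3$ minimal surface with $\mathbf c(\cM)^2 \le \bmo \le \eps_0^2$, so all curvature terms are uniformly small and can be absorbed, but one must make sure the constants in the Poincaré inequality depend only on geometric quantities (via \eqref{vfcdsa} comparing geodesic and Euclidean distances) and that the fact that $N$ is $\Iq$-valued rather than scalar is handled by the standard Sobolev theory for multivalued functions (as in \cite{DSq}). Once these routine but slightly delicate points are verified, the computations are identical in spirit to \cite[Section 3.2]{DS3} and I would simply reference that source for the bulk of the calculation, recording here only the modifications needed because the ambient surface is $\cM$ rather than a center manifold.
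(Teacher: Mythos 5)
Your approach matches the paper's, which simply defers to \cite[Lemma 3.6]{DS3} and \cite[Section 3.2]{DS3} with the observation that the computations carry over verbatim when the center manifold is replaced by the fixed minimal surface $\cM$; your explicit derivation of the ``in particular'' clause from the four displayed inequalities is correct.

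One aside in your write-up is misleading, though it does not create a gap since you ultimately defer to \cite{DS3} for the calculation: you say the Poincar\'e inequality is ``valid because $N$ vanishes on the large contact set $\cK$''. In fact $N$ does not vanish on $\cK$ (on $\cK$ the current merely agrees with the graph $\bT_F$; $N$ vanishes on the smaller contact set $\Phii(\bGam)$), and more to the point, the inequality $\bSigma(r) \le C_0 r^2 \bD(r) + C_0 r\bH(r)$ needs no vanishing hypothesis at all. It is the boundary-controlled version of Poincar\'e: one integrates $N$ radially from the annulus $\{d/r \in (\sfrac12,1)\}$, so that the trace term $\bH(r)$ furnishes the ``constant'' and the Dirichlet term $\bD(r)$ furnishes the variation. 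The vanishing of $N$ on the contact set plays a role in other estimates (such as the rough bounds \eqref{e:rough}--\eqref{N:rough}), but not in the proof of this lemma.
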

Therefore, it remains to prove \eqref{e:out} and \eqref{e:in}. We follow closely the arguments of \cite[Section 3.3 and Section 4]{DS3}. In particular, we can compute first variations of $\bT_F$ exactly as in \cite[Section 3.3]{DS3} and obtain, keeping the same notation (see \cite[(3.19)]{DS3}),
\begin{equation}\label{e:Err4-5}
	|\delta \bT_F (X)| = |\delta \bT_F (X) - \delta T (X)| \le \underbrace{\int_{\supp (T)\setminus \im (F)}  \big|\dv_{\vec T} X\big| \dd\|T\|
		+ \int_{\im (F)\setminus \supp (T)} \big|\dv_{\vec \bT_F} X\big| \dd\|\bT_F\|}_{{\rm Err}_4},
\end{equation}
where we notice that ${\rm Err}_5$  of \cite[(3.19)]{DS3} is not present in our situation as $T$ is an area-minimizing current and hence $\delta T (X) =\delta T(X^\perp)=0$ (in \cite{DS1,DS2,DS3}, the current $T$ is area-minimizing with respect to perturbations whose support lie in a prescribed Riemannian manifold, and this causes the presence of the additional error term ${\rm Err}_5$).
Therefore, with the same argument of \cite[Section 3.3]{DS3}, building upon \cite{DSsns}, we obtain  (towards  \eqref{e:out} and \eqref{e:in}, respectively)
\begin{gather}\label{e:out1}
	\big| \bD (r) - r^{-1} \bE (r)\big| \le \sum_{j=1}^4 \big|{\rm Err}^o_j\big|,	\\
\big| \bD' (r) - \sfrac{m-2}{r} \bD (r) - \sfrac{2}{r^2} \bG (r)\big|\label{e:in1}
\le C_0 \bD (r) + \sum_{j=1}^4 \big|{\rm Err}^i_j\big|,
\end{gather}
where we notice that the terms ${\rm Err}_5^o$ and ${\rm Err}_5^i$ do not appear here, as explained above. 
By \cite[(3.21)--(3.23)]{DS3} and \cite[(3.26)--(3.28)]{DS3} (which are estimates proved in \cite{DSsns}), keeping the same notation of \cite{DS3}, we have the estimates
\begin{gather}
	{\rm Err}_1^o = - Q \int_\cM \varphi_r \langle H_\cM, \etaa\circ N\rangle,\label{e:outer_resto_1}\\
	|{\rm Err}_2^o| \le C_0 \int_\cM |\varphi_r| |A|^2|N|^2,\label{e:outer_resto_2}\\
	|{\rm Err}_3^o| \le C_0 \int_\cM \big(|N| |A| + |DN|^2 \big) \big( |\varphi_r| |DN|^2  + |D\varphi_r| |DN| |N| \big)\label{e:outer_resto_3},
\end{gather} 
and  the estimates
\begin{gather}
	{\rm Err}_1^i = - Q \int_{ \cM}\big( \langle H_\cM, \etaa \circ N\rangle\, \div_{\cM} Y + \langle D_Y H_\cM, \etaa\circ N\rangle\big),\label{e:inner_resto_1}\\
	|{\rm Err}_2^i| \le C_0 \int_\cM |A|^2 \left(|DY| |N|^2  +|Y| |N| |DN|\right), \label{e:inner_resto_2}\\
	|{\rm Err}_3^i|\le C_0 \int_\cM \Big( |Y| |A| |DN|^2 \big(|N| + |DN|\big) + |DY| \big(|A||N|^2 |DN| + |DN|^4\big)\Big)\label{e:inner_resto_3}.
\end{gather}
 It is crucial to notice that, by \eqref{e:outer_resto_1} and \eqref{e:inner_resto_1},
\begin{equation}\label{err1}
	{\rm Err}_1^o={\rm Err}_1^i=0,
\end{equation}
being $\cM$ a minimal surface, by Assumption \ref{ass:sectfour} (i.e.\ $H_\cM=0$). 
We turn to the estimate of ${\rm Err}_j^o$ and ${\rm Err}_j^i$, for $j=2,3,4$, following \cite[Section 4]{DS3} closely. 

We refer to the construction of \cite[Section 4.1]{DS3} for what concerns ``families of subregions'' and the reference can be followed \textit{verbatim} (this require $\eps_0$ smaller than a geometric constant). We then assume that the reader is familiar with the construction  of \cite[Section 4.1]{DS3}, and we now recall briefly some objects introduced in  \cite[Section 4.2]{DS3}, since we need to show how the estimates of \cite{DS3} can be adapted to our case. First, we take $(\mathcal{B}^i)_i $ and $ (\mathcal{U}_i)_i $ as in \cite[Section 4.2]{DS3}, and we recall $$\mathcal{V}_i \defeq \mathcal{U}_i \cap\p\big(\supp (\bT_F) \Delta \supp (T)\big)\subseteq\mathcal{U}_i\setminus\cK.$$
From  Theorem \ref{t:approx} (with the same argument of \cite{DS3}), we obtain the following estimates,
\begin{gather}
	\int_{\mathcal{U}_i} |DN|^2 \le C_1 \bmo \ell_i^{m+2-2\delta_2},\notag\\
	\|N\|_{C^0 (\mathcal{U}_i)} + \sup_{p\in \supp (T) \cap \p^{-1} (\mathcal{U}_i)} |p- \p (p)| \le C_1 \bmo^{\sfrac{1}{2}} \ell_i^{1+\beta_2},\label{e:N_sopra}\\
	\Lip (N|_{\mathcal{U}_i}) \le C_1 \bmo^{\gamma_2} \ell_i^{\gamma_2},\notag\\\mathcal{H}^m(\mathcal{U}_i\setminus\mathcal{V}_i)+
	\mass (T\res \p^{-1} (\mathcal{V}_i)) + \mass (\bT_F \res \p^{-1} (\mathcal{V}_i)) \le C_1 \bmo^{1+\gamma_2} \ell_i^{m+2+\gamma_2}.\label{e:errori_massa},
\end{gather}
which correspond to \cite[(4.5)--(4.8)]{DS3}. We do not add the corresponding estimate to \cite[(4.4)]{DS3}, as we do not need it, and indeed we are not able to show it in our case, but we add the first summand in \eqref{e:errori_massa}, for future reference.
Now we state the last ingredient used in the error estimates, which corresponds to \cite[Lemma 4.5]{DS3}.
\begin{lem}\label{l:exponent_match}With the same assumptions of Proposition \ref{p:variation}, 
	\begin{align}
		\sum_i \big(\inf_{\cB^i} \varphi_r \big) \bmo 
		\ell_i^{m+2+\sfrac{\gamma_2}{4}} \le C_1 \big(\bD (r)+r\bH(r)\big),\notag\\
		\sum_i \bmo \ell_i^{m+2+\sfrac{\gamma_2}{4}} \le C_1 \left( \bD (r) + r \bD' (r)+r\bH(r)\right),\notag
		\\\sum_i \bmo \ell_i^{m+4-2\delta_2}\le C_1\int_{\cB_r}|N|^2. \label{grvfd}
	\end{align}
	Moreover, there exists $\alpha>0$ such that, for every $t>0$.
	\begin{equation}\notag
		\sup_i
		\bmo^t\Big[{\ell_i}^t + \Big(\inf_{\cB^i} \varphi_r\Big)^{\sfrac{t}{2}} \ell_i^{\sfrac{t}{2}}\Big]
		\le C_1^t \big(\bD (r)+r\bH(r)\big)^{\alpha t}
	\end{equation}
\end{lem}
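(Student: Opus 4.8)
\emph{Plan.} The goal is to follow the scheme of \cite[Lemma 4.5]{DS3}. Recall that the families of subregions from \cite[Section 4.1]{DS3} (imported verbatim) furnish a collection of geodesic balls $\cB^i\subseteq\cM$ of radius comparable to $\ell_i$, each associated to a cube $L_i\in\sW^{(j)}$ with $\ell(L_i)\simeq\ell_i$, with bounded overlap and contained in a fixed dilate of $\cB_r$; by condition (Go) one has $\ell_i\le c_s r$, so $\varphi_r$ is comparable to $\inf_{\cB^i}\varphi_r$ on a fixed neighbourhood of each $\cB^i$. The first step is to split the index set according to whether $L_i\in\sW_e$, $L_i\in\sW_h$, or $L_i\in\sW_n$, and to produce, in each case, a lower bound for $\bmo\,\ell_i^{\,m+2+\gamma_2/4}$ and $\bmo\,\ell_i^{\,m+4-2\delta_2}$ in terms of local Dirichlet energy or local $L^2$-mass of $N$.

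If $L_i\in\sW_e$, I would apply Proposition \ref{p:splitting} with a ball $\Omega_i=\Phii(B_{\ell(L_i)/4}(q_i))$, $q_i$ chosen so that $\Omega_i\subseteq\cB_r$ lies near $L_i$ with $\mathcal{H}^m(\Omega_i)\simeq\ell_i^m$ and $\varphi_r\simeq\inf_{\cB^i}\varphi_r$ on $\Omega_i$: then \eqref{e:split_1} gives $\bmo\,\ell_i^{\,m+2-2\delta_2}\le C_1\int_{\Omega_i}|DN|^2$, and combined with \eqref{e:split_2} also $\bmo\,\ell_i^{\,m+4-2\delta_2}\le C_1\int_{\Omega_i}|N|^2$; since $\ell_i<1$ and $\gamma_2/4\ge-2\delta_2$ one may replace the exponent $m+2-2\delta_2$ by $m+2+\gamma_2/4$. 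If $L_i\in\sW_h$, Proposition \ref{p:separ}(S3) gives $\bmo\,\ell_i^{\,m+2+2\beta_2}\le C_0\int_{\Omega_i}|N|^2$, and $\gamma_2/4\ge 2\beta_2$ together with $2-2\delta_2\ge2\beta_2$ (both from \eqref{freavfad} and $\delta_2\le\sfrac18$) let us pass to exponents $m+2+\gamma_2/4$ and $m+4-2\delta_2$. If $L_i\in\sW_n$, I would invoke item (iii) of Proposition \ref{p:flattening}: $L_i$ lies in the domain of influence $\sW_n^{(j)}(H_i)$ of some $H_i\in\sW_e$ with $\ell(H_i)\le 3c_s r$ and $\sep(H_i,L_i)\le3\sqrt m\,\ell(H_i)$, so $\ell_i\le C_0\ell(H_i)$ and the ball $\Omega_{H_i}$ of Step $\sW_e$ applied to $H_i$ can be taken in $\cB_r$ near both cubes with $\varphi_r$ comparable along $\sW_n^{(j)}(H_i)$; the thinness of domains of influence (\cite[Definition 3.3]{DS2} and the argument of \cite[Proposition 3.7]{DS2}) yields $\sum_{L\in\sW_n^{(j)}(H)}\ell(L)^{m+\mu}\le C_0\ell(H)^{m+\mu}$ for every $\mu>0$, whence $\sum_{L_i\in\sW_n^{(j)}(H)}\bmo\,\ell_i^{\,m+2+\gamma_2/4}\le C_1\int_{\Omega_H}|DN|^2$ and $\sum_{L_i\in\sW_n^{(j)}(H)}\bmo\,\ell_i^{\,m+4-2\delta_2}\le C_1\int_{\Omega_H}|N|^2$.

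Summing these over $i$ (resp.\ over $H\in\sW_e$ for the neighbour cubes) using bounded overlap of the $\Omega_i$ and $\Omega_H$, and then Lemma \ref{l:poincare'} — which gives $\int_{\cB_r}|DN|^2\le C_0\bD(r)+C_0r\bD'(r)$ and $\int_{\cB_r}|N|^2\le C_0\bSigma(r)+C_0r\bH(r)\le C_0r^2\bD(r)+C_0r\bH(r)$ — produces the three summed inequalities: keeping the weight $\varphi_r$ throughout gives the first (the $r\bH(r)$ term arising from $h$-cube balls sitting in the annulus $\{\sfrac r2\le d\le r\}$ where $\varphi_r$ is comparable to $-\phi'(d/r)\sfrac rd$), dropping the weight on $\cB_r$ produces the extra $r\bD'(r)$ in the second, and the $L^2$-versions of splitting/separation give the third. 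Finally, for the $\sup$ estimate, each single $i$ satisfies $\bmo\,\ell_i^{\sigma}\le C_1(\bD(r)+r\bH(r))$ with $\sigma\in\{m+2+2\beta_2,\,m+4-2\delta_2\}\subseteq[1,\infty)$ (for $n$-cubes by bounding $\int_{\Omega_{H_i}}|N|^2\le\int_{\cB_r}|N|^2\le C_0(\bD(r)+r\bH(r))$); since $\bmo\le1$, $\ell_i\le c_s<1$ and, for $\eps_0$ small, $\bD(r)+r\bH(r)<1$ by \eqref{e:rough}--\eqref{N:rough}, interpolating $\bmo^t\ell_i^t$ and $\bmo^t(\inf_{\cB^i}\varphi_r)^{t/2}\ell_i^{t/2}$ between $1$ and $(\bmo\,\ell_i^{\sigma})^t$ gives the claim with $\alpha=\sfrac1{2\sigma}$.

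I expect the main obstacle to be Step for $\sW_n$: one must combine the structural thinness of domains of influence with the requirement that each $H\in\sW_e$ be charged only boundedly often and that the auxiliary balls $\Omega_H$ have bounded overlap, while simultaneously matching the cutoff weight $\varphi_r\simeq\inf_{\cB^i}\varphi_r$ on all the balls entering the estimates — this geometric bookkeeping is the least routine part. The passage from the pointwise form of (S3) in \cite[Proposition 3.1]{DS2} to the integral form of Proposition \ref{p:separ} here causes no difficulty, since only the integral form is ever used.
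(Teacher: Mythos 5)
Your proposal follows essentially the same route as the paper's proof. The paper states the key lower bounds
\[
\int_{\cB^i}\varphi_r|N|^2\ge C_1^{-1}\bmo\inf_{\cB^i}\varphi_r\,\ell_i^{m+2+2\beta_2}\quad(L_i\in\sW_h),\qquad
\int_{\cB^i}\varphi_r|DN|^2\ge C_1^{-1}\bmo\inf_{\cB^i}\varphi_r\,\ell_i^{m+2-2\delta_2}\quad(L_i\in\sW_e),
\]
sourced from (S3) and \eqref{e:split_1} respectively, together with $\inf_{\cB^i}\varphi_r\ge(4r)^{-1}\ell_i$, and then defers the remaining bookkeeping (including the $\sW_n$ cubes via domains of influence, and the $\alpha$-power/interpolation step) to \cite[Lemma 4.5]{DS3} verbatim; for \eqref{grvfd} it additionally uses \eqref{e:split_2} and the disjointness of the $\cB^i$ inside $\cB_r$ — all of which you reproduce explicitly, with the exponent-comparison facts $\gamma_2/4\ge 2\beta_2$, $2-2\delta_2\ge 2\beta_2$ being correct under \eqref{freavfad}. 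The only slight divergence is that you reconstruct the $\sW_n$ and sup-estimate steps from scratch (domain-of-influence thinness, explicit interpolation against $\bD+r\bH<1$), whereas the paper simply imports them from \cite{DS3}; your reconstruction is sound, and the extra observation that $\sum_{L\in\sW_n(H)}\ell(L)^{m+\mu}\le C_0\ell(H)^{m+\mu}$ follows from the volume bound together with $\ell(L)\le\ell(H)$ is exactly the device needed there.
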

\begin{proof}
	As in \cite{DS3}, the main estimates towards the proof of the lemma are:
	\begin{alignat}{3}
		\int_{\cB^i} \varphi_r |N|^2 &\ge C_1^{-1} \bmo \inf_{\cB^i} \varphi_r \ell_i^{m+2+2\beta_2}&\ge(C_0C_1)^{-1}  \bmo^{\sfrac{1}{a}} \Big[\ell_i^2 + \Big(\inf_{\cB^i} \varphi_r\Big) \ell_i\Big]^{\sfrac{1}{(2a)}}  \quad 
		&\text{if $L_i\in \mathscr{W}_h$},\label{e:N_sotto}\\
		\int_{\cB^i} \varphi_r |DN|^2 &\ge C_1^{-1} \bmo \inf_{\cB^i} \varphi_r \ell_i^{m+2-2\delta_2}&\ge (C_0C_1)^{-1} \bmo^{\sfrac{1}{a}} \Big[\ell_i^2 + \Big(\inf_{\cB^i} \varphi_r\Big) \ell_i\Big]^{\sfrac{1}{(2a)}}
		\quad&\text{if $L_i \in \mathscr{W}_e$}. \label{e:Dirichlet_sotto}
	\end{alignat}
	Indeed, recall that by \cite[(4.2)]{DS3}, for $\ell_i\defeq\ell(L_i)$,
	$$
	\inf_{\cB^i}\varphi_r\ge (4r)^{-1}\ell_i.
	$$
	Now, the first inequality of \eqref{e:N_sotto} follows from (S3) of Proposition \ref{p:separ} (notice that, by the construction of \cite{DS3}, $\cB_i=\Phii(B(L_i))$, with $B(L_i)\subseteq B_{(\sqrt{m}+1)\ell(L)}(x_L)$ in the case $L\in\sW_h$). Also, first inequality of \eqref{e:Dirichlet_sotto} follows from \eqref{e:split_1} of Proposition \ref{p:splitting} and a similar argument. At this point, the proof of \cite[Lemma 4.5]{DS3}, can be followed \textit{verbatim} to obtain everything but \eqref{grvfd} (we do not use here that $\bI\ge 1$, but we still exploit Lemma \ref{l:poincare'}). Now we show \eqref{grvfd}. 
	First, we notice that 
	\begin{alignat}{2}
			\int_{\cB_i} |N|^2&\ge C_1^{-1}\bmo\ell_i^{m+2+2\beta_2}\quad&\text{if $L_i \in \mathscr{W}_h$},\\
				\int_{\cB_i} |N|^2&\ge C_1^{-1}\bmo\ell_i^{m+4-2\delta_2}\quad&\text{if $L_i \in \mathscr{W}_e$}.
	\end{alignat}
This is proved exactly as for \eqref{e:N_sotto} and \eqref{e:Dirichlet_sotto} (relying also on \eqref{e:split_2}). Then \eqref{grvfd} follows, taking into account that $(\cB^i)_i$ are pairwise disjoint and are contained in $\cB_r$, since, for every $i$, $\cB_i=\Phii(B(L_i))$, and $B(L_i)\subseteq \p_{\pi_0}(\cB_r)$, by the construction of \cite{DS3}.
	\end{proof}
	
	We turn now to discuss the counterpart of \cite[Section 4.3]{DS3}. We claim that, for what concerns outer variations, provided that $\gamma_3$ is smaller than a geometric constant,
	\begin{gather}\label{esto1}
		|{\rm Err}^o_2 |\le C_1 \eps_0^2 \bSigma (r),\\
			|{\rm Err}^o_3 |\le\label{esto2} C(C_1,\gamma_3)\bD(r)^{\gamma_3}\big(r^2\bD(r)+r\bH(r)\big),\\
		|{\rm Err}^o_4 |\le 	C (C_1,\gamma_3) \big(\bD (r)+r\bH(r)\big)^{1+\gamma_3},\label{esto3}
	\end{gather}
	and, for what concerns  inner variations, still assuming that $\gamma_3$ is smaller than a geometric constant,
\begin{gather}\label{esti1}
		\left|{\rm Err}^i_2\right| 	\le C_1\big( \bD (r)+\bH(r)\big),
		\\\label{esti2}
		\left|{\rm Err}^i_3\right| \le C(C_1,\gamma_3)\bD(r)^{\gamma_3}\big(\bD'(r)+\sfrac{1}{r}\bD(r)+r\bH(r)\big),\\
	|{\rm Err}^i_4| \le C (C_1,\gamma_3) \big(\bD (r)+r\bH(r)\big)^{\gamma_3} \big(\bD' (r) +\sfrac{1}{r} \bD(r)+\bH(r)\big).\label{esti3}
\end{gather}
Equations \eqref{esto1}--\eqref{esto2} and \eqref{esti1}--\eqref{esti2} follow with the machinery developed in \cite{DS3}, in particular we can follow \textit{verbatim} \cite[Section 4.3]{DS3}, building upon Lemma \ref{l:exponent_match} and the inequalities \eqref{e:outer_resto_2}--\eqref{e:outer_resto_3}, and \eqref{e:inner_resto_2}--\eqref{e:inner_resto_3}.
Also, \eqref{esto3} and \eqref{esti3} follow as in  \cite[Section 4.3]{DS3}, again building upon Lemma \ref{l:exponent_match},  and \eqref{e:N_sopra} and \eqref{e:errori_massa}, with the definition of ${\rm Err}_4$ in \eqref{e:Err4-5}. Notice that we are not plugging in that $\bI\ge 1$.

With estimates \eqref{esto1}-- \eqref{esto3} and  \eqref{esti1}-- \eqref{esti3}, together with \eqref{err1}, \eqref{e:out} and \eqref{e:in} follow by \eqref{e:out1} and \eqref{e:in1}, exactly as in \cite{DS3}.

\begin{proof}[Proof of Proposition \ref{small}]
Fix $r\in [\sfrac{s}{t},3]$. We use  the same notation of \cite[Section 4]{DS3} that we have recalled above.
We estimate, for a fixed $i$, using \eqref{e:N_sopra} and \eqref{e:errori_massa} as well as \cite[Lemma 1.9]{DSsns} (as in the proof of Proposition \ref{comp1})
\begin{align*}
\int_{\mathcal{U}_i}|N|^2 \le \int_{\mathcal{V}_i}|N|^2+\int_{\mathcal{U}_i\setminus\mathcal{V}_i}|N|^2\le
C_1\bmo^{2+\gamma_2}\ell_i^{m+4+\gamma_2+2\beta_2}+2\int_{\p^{-1}(\mathcal{U}_i)}\dist^2(x,\cM)\dd\|T\|.
\end{align*}
Now we sum the previous inequality, using \eqref{grvfd} and  recalling that   $(\mathcal{U}_i)_i$ is a partition of $\cB_r$, to obtain
$$
\int_{\cB_r}|N|^2\le C_1 \bmo^{1+\gamma_2}\int_{\cB_r}|N|^2+2\int_{\p^{-1}(\cB_r)}\dist^2(x,\cM)\dd\|T\|(x),
$$
so that the, provided $C_1\eps_0^{2+2\gamma_2}\le \sfrac{1}{2}$,
$$
\int_{\cB_r}|N|^2\le 4\int_{\p^{-1}(\cB_r)}\dist^2(x,\cM)\dd\|T\|(x).
$$
It remains to prove that $\supp(T)\cap \p^{-1}(\cB_r)\subseteq \B_{2r}$. Take $q\in\supp(T)\cap \p^{-1}(p)$, for $p\in\cB_r$. Then $p=\Phii(x)$ where $x\in L\cap B_r$. As $\sfrac{s}{t}\le r$, $\ell(L)\le r$, so that the conclusion is by Corollary \ref{c:cover}, provided that $\eps_0$ is small enough.
\end{proof}
\subsection{Proof of the results of Section \ref{freqbound}}
\begin{proof}[Proof of Proposition \ref{compH}]
	We set $r_{j-1}\defeq \sfrac{s_{j-1}}{t_{j-1}}\le \sfrac{2^{-5}}{\sqrt{m}}$ (by item $\rm (i)$ of Proposition \ref{p:flattening}), which will play the role of $s$ in Proposition \ref{comp1}.
	 By the definition of stopping time $s_{j-1}$ and Proposition \ref{comp1}, there exists $L\in\sW^{(j),e}$ with  $r_{j-1}=c_s^{-1}\ell(L)$ and
	\begin{equation}\label{comp1eqz}
		\bmo^{(j-1)}r_{j-1}^{m+4-2\delta_2} \le C_1	\int_{\p^{-1}(\Omega)}\dist^2(x,\cM_{j-1})\dd\|T_{j-1}\|(x),
	\end{equation}
	for every $\Omega\defeq \Phii (B_{c_s{r_{j-1}}/4} (q))$, where $q_j\in \pi_0$ with $\dist(L,q)\le 3 r_{j-1}$. Notice that $q\in\partial B_{\sfrac{5r_{j-1}}{2}}$ is suitable as $r_{j-1}\ge \dist(0,L)$ and that $\Omega\subseteq  \cB_{\sfrac{21r_{j-1}}{8}}\setminus \cB_{\sfrac{19r_{j-1}}{8}}$, so that \eqref{comp1eqz} reads 
	\begin{equation}\label{comp1eqzz}
		\bmo^{(j-1)} r_{j-1}^{m+4-2\delta_2} \le C_1	\int_{\p^{-1}(\cB_{\sfrac{21r_{j-1}}{8}}\setminus \cB_{\sfrac{19r_{j-1}}{8}})}\dist^2(x,\cM_{j-1})\dd\|T_{j-1}\|(x).
	\end{equation}
	Now we rescale \eqref{comp1eqzz} by $r_{j-1}=\sfrac{t_j}{t_{j-1}} $  (notice e.g.\ that $T_j = ((\iota_{0,t_{j}})_\sharp T)\res \bB_{6\sqrt{m}}= ((\iota_{0,r_{j-1}})_\sharp T_{j-1})\res \bB_{6\sqrt{m}}$) and we obtain:
	\begin{equation}\notag
		\bmo^{(j-1)} r_{j-1}^{2-2\delta_2} \le C_1	\int_{\p^{-1}(\cB_{\sfrac{21}{8}}\setminus \cB_{\sfrac{19}{8}})}\dist^2(x,\cM_{j})\dd\|T_{j}\|(x),
	\end{equation}
	so that, by Proposition \ref{comp2},
	\begin{equation}\label{vmfdf1}
		\bmo^{(j-1)} r_{j-1}^{2-2\delta_2}\le C_1\bigg((\bmo^{(j)})^{1+\gamma_2}+\int_{{\cB_{\sfrac{21}{8}}\setminus \cB_{\sfrac{19}{8}}}}|N_j|^2\bigg).
	\end{equation}
	Now we notice that, with the same notation of Assumption \ref{ipotesi}, $$\mathbf{c}(\cM_{j})\le C_0 r_{j-1}\mathbf{c}(\cM_{j-1})\le C_0r_{j-1}(\bmo^{(j-1)})^{\sfrac{1}{2}}.$$ Also, by item $\rm(iv)$ of Proposition \ref{p:flattening}, $$\bE(T_{j},\B_{6\sqrt{m}},\pi_0) =\bE(T_{j-1},\B_{6\sqrt{m}r_{j-1}},\pi_0)\le C_1\bmo^{(j-1)} r_{j-1}^{2-2\delta_2}.$$ Hence,
	\begin{equation}\label{vmfdf}
		\bmo^{(j)}=\max\{\mathbf{c}(\cM_{j})^2,\bE(T_{j},\B_{6\sqrt{m}},\pi_0) \} \le \max\{ C_0r_{j-1}^2 \bmo^{(j-1)},C_1 \bmo^{(j-1)} r_{j-1}^{2-2\delta_2}\}\le C_1  \bmo^{(j-1)}  r_{j-1}^{2-2\delta_2}.
	\end{equation}
	Therefore, if $\eps_0$ is smaller than a constant (that depends on $C_1$), by \eqref{vmfdf1} and \eqref{vmfdf}, 
	$$
	\bmo^{(j-1)} r_{j-1}^{2-2\delta_2}\le C_1 \int_{{\cB_{3}\setminus \cB_{\sfrac{3}{2}}}}|N_j|^2,
	$$
	which implies the claim by the definition of $\bH_{j}$, recalling \eqref{vmfdf} once again.
\end{proof}
\begin{proof}[Proof of Theorem \ref{t:boundedness}]
	Consider case $(i)$ first, i.e.\ when the intervals of flattening are finitely many, say $j_0$. Let us drop the index $j_0$ for ease of notation. We claim that for every $\rho_0>0$  there exists $\rho\in]0,\rho_0[$ with $\bH(\rho)>0$. Otherwise, $N|_{\cB_{\rho_0}}=0$, so that no cube in $\sW^{(j)}$ intersects $B_{\sfrac{\rho_0}{4}}$, by Proposition \ref{p:separ} and Proposition \ref{p:splitting}. By item $\rm(iii)$ of Corollary \ref{c:cover}, $\supp(T)\cap \p^{-1}(\cB_{\sfrac{\rho_0}{4}})\subseteq\cM$, contradicting the assumption of the Theorem.

	By the paragraph above, we have $\rho_1\in ]0,1[$ with $\bH(\rho_1)>0$. 
	We can prove the first part of \eqref{e:finita1} by contradiction: let $\rho_0\defeq\sup\{r\in]0,\rho[:\bH(r)=0\}$, notice that $\bH(\rho_0)=0$ by continuity of $\bH$.
	By Theorem \ref{t:frequency}, 	$\bI (r) \leq C_1 (1+ \bI (\rho))$ for every $r \in ]\rho_0,\rho[$.
	By letting $r\searrow \rho_0$, we then conclude
\begin{equation}\label{ccsda}
		\rho_0 \bD (\rho_0) \le C_1(1+ \bI (\rho)) \bH (\rho_0) = 0,
\end{equation}
	that is, $N_j\vert_{\cB_{\rho_0}}=0$ which we have  excluded in the previous paragraph.
	Therefore, since $\bH >0$ on $]0, \rho_1[$, we can now apply
	Theorem~\ref{t:frequency} to conclude \eqref{e:finita1}.  
	
	We assume therefore that the intervals of flattening are infinitely many, i.e.\ that we are in case $(ii)$. We partition the extrema
	$t_j$ of the intervals of  flattening into two different classes: the class $(A)$ when $t_j = s_{j-1}$ and the class $(B)$ when $t_j<s_{j-1}$. We are going to prove separately \eqref{e:finita2} for the intervals $[s_j,t_j]$ according to $t_j\in (A)$ or $t_j\in (B)$.
	\medskip\\\textbf{Class $(A)$}. Take $j$ with $t_j\in (A)$. Hence $t_j=s_{j-1}$. Therefore we have, by \eqref{e:rough} and Proposition \ref{compH}, 
	\begin{equation}\label{at3}
		\bI_j(3)=\frac{3 \bD_j(3)}{\bH_j(3)}\le  C_1,
	\end{equation}
	(in particular $\bH_j(3)>0$). We argue now as above to prove that $\bH_j>0$ on $]\sfrac{s_j}{t_j},3[$. If this were not the case, take $\rho_0\defeq\sup\{r\in]0,3[:\bH_j(r)=0\}$, $\bH_j(\rho_0)=0$. Now we can argue exactly as in the first part of the proof, see \eqref{ccsda} to derive a contradiction. This proves the first part of \eqref{e:finita2} (in the case $(A$)) and hence, we can use Theorem \ref{t:frequency} to prove also the second part of \eqref{e:finita2} (in the case $(A$)), by recalling \eqref{at3}.
	\medskip\\\textbf{Class $(B)$} 
	We assume that there are infinitely many $(t_j)_j$ belonging to the class $(B)$, otherwise there is nothing to show. Notice that it is enough to show the last assertion of \eqref{e:finita2}. Indeed, if we manage to do this, by \eqref{e:rough}, we obtain \eqref{at3} and then we can conclude as before.

Assume then by contradiction that the last claim of \eqref{e:finita2} does not hold. 	Now, for every $j\in (B)$, there exists $\eta_j\in ]0,1[$ with  $\bE(T,\B_{6\sqrt{m}(1+\eta_j)t_j},\pi_0)\ge\eps_0^2$, by the definition of the intervals of flattening. Hence, we can take an infinite subset of $(B)$, say $(B)'$, for which (recall also the contradiction assumption)
	\begin{equation}\label{vgraf}
	(\iota_{0,t_j})_\sharp T\rightarrow S,\quad\eps_0^2\le \bE(S,\B_1,\pi_0)\quad\text{and}\quad \lim_{j\in (B)'}  \int_{\cB_3\setminus\cB_{\sfrac{3}{2}}}|N_j|^2=0,
\end{equation}
where $S$ is an area minimizing cone 
	(we refer to the final part of the proof of \cite[Thorem 5.1]{DS3} for details).
	
	Now we want to  show that \eqref{vgraf}  can not be true, hence concluding the proof.  Observe first that $\cM_j=\iota_{0,t_j}(\cM)\rightarrow\pi_0$. 	
	Notice also that by items $\rm(ii)$ and $\rm(iii)$ of Corollary \ref{c:cover} and by \eqref{vfcdsa}, provided that $\eps_0$ is small enough (depending on $C_1$, but not on $j$), if $x\in \supp(T_j)\cap \B_{\sfrac{21}{8}}\setminus \B_{\sfrac{19}{8}}$, then $x\in \supp(T_j)\cap \p^{-1}(\cB_{\sfrac{23}{8}}\setminus \cB_{\sfrac{17}{8}})$ (where, again, we are not making explicit the dependence of $\cB$ on $\cM_j$). Hence, using Proposition \ref{comp2},
\begin{equation}\label{vgraf1}
		\int_{  \B_{\sfrac{21}{8}}\setminus \B_{\sfrac{19}{8}}}\dist^2(x,\cM_j)\dd\|T_j\|(x)\le C_1\bigg( (\bmo^{(j)})^{1+\gamma_2}\sup_{L\in\sW^{(j)}:L\cap (B_3\setminus B_{\sfrac{3}{2}})\ne\emptyset}\ell(L)+\int_{{\cB_{3}\setminus \cB_{\sfrac{3}{2}}}}|N_j|^2\bigg).
\end{equation}

Now  set $A\defeq B_{\sfrac{11}{4}}\setminus B_{\sfrac{9}{4}}$ and $A'\defeq B_{\sfrac{23}{8}}\setminus B_{\sfrac{17}{8}}$. Fix $j\in(B)'$ and take $L\in\sW^{(j)}$ with $L\cap A\ne\emptyset$ such that 
$$
\ell(L)=\max_{J\in\sW^{(j)}: J\cap A\ne \emptyset}\ell(J)\eqdef\bar\ell^{(j)}.
$$  If $L\in\sW^{(j),h}$, take $B\defeq B_{\sfrac{\ell(L)}4}(q)\subseteq B_{(\sqrt{m}+1)\ell(L)}(x_L)\cap A'$. If $L\in\sW^{(j),e}$, take $B\defeq B_{\sfrac{\ell(L)}4}(q)\subseteq A'$ with $\dist(L,q)\le 4c_s^{-1}\ell(L)$. If $L\in \sW^{(j),n}$, by Proposition \ref{p:separ}, (see \cite[Corollary 3.2]{DS2}), take $H\in \sW^{(j),e}$  with $L\subseteq B_{3\sqrt{m}\ell(H)}(x_H)$. Then take $B\defeq  B_{\sfrac{\ell(H)}4}(q)\subseteq A'$  with $\dist(H,q)\le 4c_s^{-1}\ell(H)$. Notice that $\Omega\defeq\Phii(B)\subseteq\cB_{{3}}\setminus\cB_{\sfrac{3}{2}}$. By Proposition \ref{p:separ}, if $L\in\sW^{(j),h}$, $$C_h^2\bmo\ell(L)^{m+2+2\beta_2}\le C_0\int_\Omega|N_j|\le C_0 \int_{\cB_{{3}}\setminus\cB_{\sfrac{3}{2}}}|N_j|^2,$$
whereas, Proposition \ref{p:separ}, if $L\in\sW^{(j),e}\cup\in\sW^{(j),n} $ (and $\ell(H)\ge \ell(L)$ for the case $L\in\sW^{(j),n}$),
$$
C_e\bmo\ell(L)^{m+4-2\delta_2}\le C_1^2\int_\Omega|N_j|\le C_1^2 \int_{\cB_{{3}}\setminus\cB_{\sfrac{3}{2}}}|N_j|^2.
$$
In any case, by the last conclusion of \eqref{vgraf}, we have that 
\begin{equation}\label{vfdsaa}
	\lim_{(B)'\ni j\rightarrow\infty}\bar\ell^{(j)}=0.
\end{equation}

	Now, exploiting the convergence of $T_j\to S$ with the convergence of $\cM_j\to\pi_0$,
	$$
		\int_{  \B_{\sfrac{21}{8}}\setminus \B_{\sfrac{19}{8}}}\dist^2(x,\pi_0)\dd\|S\|(x)=	\lim_{(B)'\ni j\rightarrow\infty}\int_{  \B_{\sfrac{21}{8}}\setminus \B_{\sfrac{19}{8}}}\dist^2(x,\cM_j)\dd\|T_j\|(x),
	$$
	whereas, by \eqref{vfdsaa} and \eqref{vgraf} again,
	$$
	\lim_{(B)'\ni j\rightarrow\infty}\bigg((\bmo^{(j)})^{1+\gamma_2}\sup_{L\in\sW^{(j)}:L\cap (B_3\setminus B_{\sfrac{3}{2}})\ne\emptyset}\ell(L)+\int_{{\cB_{3}\setminus \cB_{\sfrac{3}{2}}}}|N_j|^2\bigg)=0.
	$$
	This, together with \eqref{vgraf1} again, contradicts the second conclusion of \eqref{vgraf}.
\end{proof}

\phantomsection\appendix
\section{Height bound}
\begin{lem}\label{heighapp}
	Let $m,n,Q$ be positive integers. Then there exist $\bar\eps=\bar\eps(m,n,Q)>0$ and $\bar C_0=\bar C_0(m,n,Q)$ with the following property. Let $r>0$ and let  $T$ be an $m$-dimensional (locally) area minimizing current in $\RR^{m+n}$ with 
\begin{gather*}
	\partial T \res \bC_{8r} = 0,\\
	  (\p_{\pi_0})_\sharp( T\mres \bC_{8r} )= Q \a{B_{8r} },
\end{gather*}
and with 	  $\bE\defeq\bE(T,\bC_{8r})<\bar\eps$.
Then there are $\{y_1,\dots,y_Q\}\subseteq\RR^n$ such that  $$\supp(T)\cap \bC_r\subseteq\bigcup_{i=1}^Q\big(\RR^m\times B_{\bar C_0r\bE^{\sfrac{1}{2}}}(y_i)\big).$$
\end{lem}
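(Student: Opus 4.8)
The final statement is the height bound Lemma \ref{heighapp}. This is a classical result (essentially Almgren's height bound / De Lellis--Spadaro [DS1, Theorem A.1] or the improved version [SpolAlm, Theorem 1.5]), so the proof should mostly be a reduction to known results plus a covering/connectedness argument.

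\textbf{The plan is as follows.} After rescaling we may assume $r=1$. The first step is to invoke the standard (weak) height bound for area minimizing currents with small excess over a cylinder: since $\partial T \res \bC_8 = 0$, $(\p_{\pi_0})_\sharp (T\mres\bC_8) = Q\a{B_8}$ and $\bE = \bE(T,\bC_8) < \bar\eps$, the Almgren--De Lellis--Spadaro height bound (in the strengthened $L^\infty$ form of \cite{SpolAlm} or \cite[Theorem A.1]{DS1}) gives
\[
\bs(T, \bC_1, \pi_0) \le \bar C_0 \bE^{\sfrac{1}{2\cdot}}
\]
— more precisely the oscillation of $\supp(T)\cap\bC_1$ in the $\pi_0^\perp$ direction is controlled by $\bE^{\gamma}$ for some dimensional exponent $\gamma$; with the improved bound of \cite{SpolAlm} one gets exactly the exponent $\sfrac12$ that is claimed. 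This already shows that $\supp(T)\cap\bC_1$ lies in a slab $\RR^m\times B_{\bar C_0 \bE^{1/2}}(y_0)$ for a single $y_0$. To get the sharper statement with $Q$ separate slabs one has to combine this with a decomposition into sheets.

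\textbf{The second step} is the sheet decomposition. Using the constancy-type argument (the standard Lipschitz approximation of \cite[Theorem 2.4]{DS1}, or directly the structure theory), since $(\p_{\pi_0})_\sharp(T\mres\bC_8) = Q\a{B_8}$ and the excess is small, over a slightly smaller cylinder the current decomposes as a sum of $Q$ (not necessarily distinct) Lipschitz graphs with small Lipschitz constant; equivalently one applies the height bound at \emph{every} scale and point to conclude that the support, restricted to $\bC_1$, is contained in finitely many connected pieces. Group the values of the $Q$ sheets at a fixed point $x_0 \in B_1$ into clusters according to whether they are within, say, $10\bar C_0 \bE^{1/2}$ of each other; this partitions $\{1,\dots,Q\}$ into classes, and on each class the corresponding portion of the support stays within a slab of width $O(\bar C_0 \bE^{1/2})$ by the single-slab height bound applied to that piece (which is again area minimizing with the same projection multiplicity, up to constants). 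Taking $\{y_1,\dots,y_Q\}$ to be the cluster centers (repeating centers as needed to have exactly $Q$ of them) yields $\supp(T)\cap\bC_1 \subseteq \bigcup_{i=1}^Q (\RR^m\times B_{\bar C_0\bE^{1/2}}(y_i))$, after possibly enlarging $\bar C_0$.

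\textbf{The main obstacle} is making the sheet decomposition rigorous with the clean exponent $\sfrac12$: one must be careful that each "cluster piece" of $T$ is itself a current to which the height bound applies with constants depending only on $m,n,Q$, i.e. that splitting off a subset of the sheets does not destroy area minimality or the cylinder hypotheses. This is handled exactly as in the proof of \cite[Lemma A.2]{DS2} (quoted in the present paper): one uses that distinct clusters are separated by a definite multiple of $\bE^{1/2}$, so that locally $T$ is, on each cluster region, an area minimizing current with its own projection multiplicity $Q_i$, $\sum_i Q_i = Q$, and the single-slab estimate applies to each with a uniform constant. Since all the ingredients (the weak height bound \cite{SpolAlm, DS1}, the Lipschitz approximation \cite{DS1}, the splitting argument \cite{DS2}) are available as black boxes, the proof is essentially a citation: indeed in the body of the paper this lemma is invoked precisely in this form, so here it suffices to state the reduction and point to \cite[Theorem A.1]{DS1}, \cite[Theorem 1.5]{SpolAlm}, and the argument at the beginning of the proof of \cite[Lemma A.2]{DS2}.
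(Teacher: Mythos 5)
There is a genuine gap, and it is in your Step 1. The ``single-slab'' height bound you invoke --- that $\supp(T)\cap\bC_1$ lies in $\RR^m\times B_{\bar C_0\bE^{1/2}}(y_0)$ for a single center $y_0$ --- is simply false under the hypotheses of the lemma. The hypotheses give only $\partial T\res\bC_{8r}=0$, $(\p_{\pi_0})_\sharp(T\res\bC_{8r})=Q\a{B_{8r}}$ and small cylindrical excess; there is no density-$Q$ hypothesis at a point. Consider $T=\a{\{y=0\}}+\a{\{y=a\}}$ for a large constant $a$: this satisfies all three hypotheses (with $Q=2$) and has excess zero, yet its support has spread $a$ in the vertical direction. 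A single-slab height bound requires an additional pointwise density assumption (as is in force in Lemma~\ref{height} of the paper, where $\Theta(0,T^0)=Q$), and you do not have that here. Under the projection hypothesis alone, the correct height bound is inherently a $Q$-slab statement --- this is precisely what \cite[Theorem~A.1]{DS1} says --- and your ``Step~2 refinement into $Q$ slabs'' is not a sharpening of Step 1 (a union of $Q$ balls is a weaker containment than a single ball) but rather the only form of the conclusion that can hold.

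There is a second, related issue: even if you replace Step 1 by the correct $Q$-slab height bound from \cite[Theorem~A.1]{DS1} and run the clustering argument of Step 2, you will inherit the exponent of that theorem, which is strictly worse than $\sfrac12$ (it is of the form $\bE^{\gamma}$ for some dimensional $\gamma<\sfrac12$). Your outline gives no mechanism for upgrading to the sharp $\bE^{\sfrac12}$ claimed in the statement, which is the actual content of the lemma. The paper's proof takes a completely different route to get the sharp exponent: it constructs the Lipschitz approximation $f$ and the $\D$-minimizing approximation $w$ from \cite[Theorems~2.4 and 2.6]{DS1}, lets $\boldsymbol\pi$ be the union of the $Q$ planes through $w(0)$, uses the $C^{0,\alpha}$ interior estimate for $\D$-minimizers (with Hölder seminorm $\sim\bE^{\sfrac12}$) to show that $\supp(T)\cap\bC_{\sfrac32}$ stays within bounded distance of $\boldsymbol\pi$, computes the $L^2$ estimate $\int_{\bC_{3/2}}\dist^2(x,\boldsymbol\pi)\,\dd\|T\|\le C_0\bE$, and then invokes the $L^2$-to-$L^\infty$ bound of \cite[Theorem~3.2]{DMSk3} to convert this into the $L^\infty$ containment with the sharp exponent. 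That last ingredient --- an $L^2$-to-$L^\infty$ mechanism adapted to multi-plane targets --- is entirely absent from your outline, and it is where the exponent $\sfrac12$ actually comes from.
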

\begin{proof}
 We can clearly assume with no loss of generality that $r=1$. In what follows, $C_0$ denotes a constant that depends only on $m,n,Q$ and may vary during the proof.
	Assuming $\bar\eps$ sufficiently small, we can apply \cite[Theorem 2.4]{DS1} and \cite[Theorem 2.6]{DS1} (with $\bar\eta=1$) to obtain maps $f,w:B_{2}\rightarrow\Iq(\RR^n)$ and a set $K\subseteq B_{2}(x)$ (notice that, in our situation, the maps $u$ and $w$ of  \cite[Theorem 2.6]{DS1} coincide). 
	Let $\{y_1,\dots,y_Q\}\subseteq\RR^n$ so that $w(0)=\sum_{i=1}^Q \a{y_i}$ and set ${\boldsymbol{\pi}}\defeq\bigcup_{i=1,\dots,Q}(y_i+\pi_0)$. Take $z\in\supp(T)\cap \bC_{\sfrac{3}{2}}$, by the monotonicity formula, $\|T\|(\B_{\sfrac{1}{4}}(z))\ge \omega_m (\sfrac{1}{4})^m$. By \cite[(2.5)]{DS1} $\mathcal{H}^m(B_{2}\setminus K)\le C_0 \bE^{1+\gamma_1}$ and by \cite[(2.8)]{DS1},
	\begin{equation}\label{fvedcdf}
		\int_{B_{2}}\cG(f,w)^2\le  C_0 \bE.
	\end{equation}
	Now set $L\defeq \p_{\pi_0}\big(\supp(T)\cap \B_{\sfrac{1}{4}}(z)\cap (K\times\pi_0^\perp)\big)$, notice that 
	\begin{align*}
\|T\|(\B_{\sfrac{1}{4}}(z))&\le	\|T\|(\B_{\sfrac{1}{4}}(z)\cap (K\times\pi_0^\perp))+\|T\|((B_{2}\setminus K)\times\pi_0^\perp) \\&\le\|T\|(L\times \pi_0^\perp)+C_0\bE\le Q\mathcal{H}^m(L)+C_0\bE
	\end{align*}
so that, provided $\bar\eps$ is small enough (depending on $m,n,Q$), using also a Chebyshev argument with \eqref{fvedcdf}, we deduce that there exists $z'\in \B_{\sfrac{1}{2}}(z)\cap\bC_{\sfrac{7}{4}}\cap\supp(T)$ with $z'\in \gr(w)$. Hence, $\supp(T)\cap  \bC_{\sfrac{3}{2}}\subseteq \B_{1}(\gr(w)\cap \bC_{\sfrac{7}{4}})$.

Now, by \cite[(2.8)]{DS1} and \cite[Theorem 3.9]{DSq}, for some $\alpha=\alpha(m,Q)\in (0,1)$,
\begin{equation}\label{cedscas}
	\cG(w(x),w(y))\le C_0\bE^{\sfrac{1}{2}}|x-y|^\alpha\quad\text{for every }x,y\in B_{\sfrac{7}{4}},
\end{equation}
in particular, if $\bar\eps$ is small enough,
\begin{equation}\label{vfefdcacs}
	\supp(T)\cap \bC_{\sfrac{3}{2}}\subseteq \B_2(\boldsymbol{\pi}).
\end{equation}

Now we follow the proof of \cite[Lemma 6.2]{DMSk3}. We compute, recalling \cite[Lemma 1.9]{DSsns},
\begin{align*}
	\int_{\bC_{\sfrac{3}{2}}}\dist^2(x,\boldsymbol{\pi})\dd\|T\|(x)&=	\int_{\bC_{\sfrac{3}{2}}\cap (K\times\pi_0^\perp)}\dist^2(x,\boldsymbol{\pi})\dd\|T\|(x)+\int_{\bC_{\sfrac{3}{2}}\setminus (K\times\pi_0^\perp)}\dist^2(x,\boldsymbol{\pi})\dd\|T\|(x)\\&\le C_0 \int_{B_{\sfrac{3}{2}}\cap(K\times\pi_0^\perp)}\cG(f,w(0))^2+C_0\|T\|({\bC_{\sfrac{3}{2}}\setminus (K\times\pi_0^\perp)})
\\&\le C_0\int_{B_{\sfrac{3}{2}}}\cG(f,w)^2+C_0\int_{B_{\sfrac{3}{2}}}\cG(w,w(0))^2+C_0\bE
\\&\le C_0\bE +C_0\bE \int_{B_{\sfrac{3}{2}}}|x|^{2\alpha}+C_0\bE\le C_0\bE,
\end{align*}
where in the first inequality we used \eqref{vfefdcacs} and in the last inequality we used \cite[(2.8)]{DS1} and \eqref{cedscas}.
Hence, the conclusion follows from (a scaled version of) \cite[Theorem 3.2]{DMSk3}.	
\end{proof}

	 \bibliographystyle{abbrv}
	\bibliography{Biblio.bib}

\begin{thebibliography}{10}

\bibitem{Agmon}
S.~Agmon.
\newblock {\em Unicit\'{e} et convexit\'{e} dans les probl\`emes diff\'{e}rentiels}, volume No. 13 (\'Et\'e, 1965) of {\em S\'{e}minaire de Math\'{e}matiques Sup\'{e}rieures [Seminar on Higher Mathematics]}.
\newblock Les Presses de l'Universit\'{e} de Montr\'{e}al, Montreal, QC, 1966.

\bibitem{AllardFirst}
W.~K. Allard.
\newblock {On the First Variation of a Varifold}.
\newblock {\em Annals of Mathematics}, 95(3):417--491, 1972.

\bibitem{Almgren}
F.~J. Almgren, Jr.
\newblock {\em Almgren's big regularity paper}, volume~1 of {\em World Scientific Monograph Series in Mathematics}.
\newblock World Scientific Publishing Co., Inc., River Edge, NJ, 2000.
\newblock $Q$-valued functions minimizing Dirichlet's integral and the regularity of area-minimizing rectifiable currents up to codimension 2, With a preface by Jean E. Taylor and Vladimir Scheffer.

\bibitem{Almgren00}
F.~J. Almgren, Jr.
\newblock {\em Almgren's big regularity paper}, volume~1 of {\em World Scientific Monograph Series in Mathematics}.
\newblock World Scientific Publishing Co., Inc., River Edge, NJ, 2000.
\newblock $Q$-valued functions minimizing Dirichlet's integral and the regularity of area-minimizing rectifiable currents up to codimension 2, With a preface by Jean E. Taylor and Vladimir Scheffer.

\bibitem{Aronszajn}
N.~Aronszajn.
\newblock A unique continuation theorem for solutions of elliptic partial differential equations or inequalities of second order.
\newblock {\em J. Math. Pures Appl. (9)}, 36:235--249, 1957.

\bibitem{AKS}
N.~Aronszajn, A.~Krzywicki, and J.~Szarski.
\newblock A unique continuation theorem for exterior differential forms on {R}iemannian manifolds.
\newblock {\em Ark. Mat.}, 4:417--453, 1962.

\bibitem{Carleman}
T.~Carleman.
\newblock Sur un probl\`eme d'unicit\'{e} pur les syst\`emes d'\'{e}quations aux d\'{e}riv\'{e}es partielles \`a deux variables ind\'{e}pendantes.
\newblock {\em Ark. Mat. Astr. Fys.}, 26(17):9, 1939.

\bibitem{DMSk3}
C.~De~Lellis, P.~Minter, and A.~Skorobogatova.
\newblock {The fine structure of the singular set of area-minimizing integral currents III: frequency 1 flat singular points and $\mathcal{H}^{m-2}$-a.e. uniqueness of tangent cones}.
\newblock 2023.

\bibitem{DSq}
C.~De~Lellis and E.~Spadaro.
\newblock Q-valued functions revisited.
\newblock {\em Mem. Amer. Math. Soc.}, 211(991):vi+79, 2011.

\bibitem{DS1}
C.~De~Lellis and E.~Spadaro.
\newblock {Regularity of area minimizing currents I: gradient $L^p$ estimates}.
\newblock {\em Geometric and Functional Analysis}, 24(6):1831--1884, 2014.

\bibitem{DSsns}
C.~De~Lellis and E.~Spadaro.
\newblock Multiple valued functions and integral currents.
\newblock {\em {Ann. Sc. Norm. Super. Pisa Cl. Sci. (5)}}, 14(4):1239 -- 1269, 2015.

\bibitem{DS2}
C.~De~Lellis and E.~Spadaro.
\newblock {Regularity of area-minimizing currents {II}: center manifold}.
\newblock {\em Ann. of Math.}, 183:499--575, 2016.

\bibitem{DS3}
C.~De~Lellis and E.~Spadaro.
\newblock {Regularity of area-minimizing currents III: blow-up}.
\newblock {\em Ann. of Math.}, 183:577---617, 2016.

\bibitem{FedererInt}
H.~Federer.
\newblock Some theorems on integral currents.
\newblock {\em Trans. Amer. Math. Soc.}, 117:43--67, 1965.

\bibitem{GL1}
N.~Garofalo and F.-H. Lin.
\newblock Monotonicity properties of variational integrals, {$A_p$} weights and unique continuation.
\newblock {\em Indiana Univ. Math. J.}, 35(2):245--268, 1986.

\bibitem{GL2}
N.~Garofalo and F.-H. Lin.
\newblock Unique continuation for elliptic operators: a geometric-variational approach.
\newblock {\em Comm. Pure Appl. Math.}, 40(3):347--366, 1987.

\bibitem{HS}
R.~Hardt and L.~Simon.
\newblock Nodal sets for solutions of elliptic equations.
\newblock {\em J. Differential Geom.}, 30(2):505--522, 1989.

\bibitem{Kazdan}
J.~L. Kazdan.
\newblock Unique continuation in geometry.
\newblock {\em Comm. Pure Appl. Math.}, 41(5):667--681, 1988.

\bibitem{Lawson:1977aa}
H.~B. Lawson and R.~Osserman.
\newblock {Non-existence, non-uniqueness and irregularity of solutions to the minimal surface system}.
\newblock {\em Acta Mathematica}, 139(1):1--17, 1977.

\bibitem{Morrey2}
C.~Morrey.
\newblock {\em Multiple integrals in the calculus of variations}, volume 130 of {\em Grundlehren Math. Wiss.}
\newblock Springer, Cham, 1966.

\bibitem{Morreyanalytic}
C.~B. Morrey.
\newblock Second order elliptic systems of differential equations.
\newblock {\em Proceedings of the National Academy of Sciences of the United States of America}, 39(3):201--206, 1953.

\bibitem{SpolAlm}
L.~Spolaor.
\newblock Almgren's type regularity for semicalibrated currents.
\newblock {\em Advances in Mathematics}, 350:747--815, 2019.

\end{thebibliography}
\end{document}